\documentclass[a4paper,11pt]{amsart}
%
\usepackage{latexsym,amssymb,amsthm,amsmath,color}
\usepackage{xspace}
\usepackage{tikz-cd}
\usepackage{mathtools}
\usepackage[all]{xy}
\usepackage[abs]{overpic}
\usepackage{hyperref}
\makeatletter
\renewcommand\subsubsection{\@startsection{subsubsection}{3}%
	\z@{.5\linespacing\@plus.7\linespacing}{-.5em}%
	{\normalfont\bfseries}}
\makeatother
\newcommand{\cc}{\ensuremath{\mathbb{C}}\xspace}

\newcommand{\rr}{\ensuremath{\mathbb{R}}\xspace}
\newcommand{\zz}{\ensuremath{\mathbb{Z}}\xspace}
\newcommand{\D}{\ensuremath{\mathbb{D}}\xspace}
\newcommand{\Cour}[1]{[\![ #1 ]\!]}
\newcommand{\abs}[1]{\left| #1 \right|}

\newcommand{\set}[1]{\lbrace #1 \rbrace}
\newcommand{\comp}{\hspace{0.1 mm} \circ \hspace{0.1 mm}}

\renewcommand{\bar}{\overline}
\newcommand{\alert}[1]{{\color{red} #1}}
\newcommand{\inp}[1]{\left\langle #1 \right\rangle}

\DeclareMathOperator{\Res}{Res}

\newcommand{\A}{\mathcal{A}}
\newcommand{\elli}{\mathcal{A}_{\abs{D}}}
\newcommand{\ellio}{\mathcal{A}_{\abs{D}}}
\newcommand{\ellit}{\mathcal{A}_{\abs{\hat{D}}}}
\newcommand{\elliti}{\mathcal{A}_{\abs{\tilde{D}}}}
\newcommand{\ellihat}{\mathcal{A}_{\abs{\hat{D}}}}
\newcommand{\J}{\mathcal{J}}

\DeclareMathOperator{\im}{Im}

\DeclareMathOperator{\bas}{bas}
\DeclareMathOperator{\red}{red}

\DeclareMathOperator{\Sing}{Sing}

\renewcommand{\phi}{\varphi}
%
\setlength{\oddsidemargin}{0.0 cm}
\setlength{\evensidemargin}{0.0 cm}
\setlength{\topmargin}{0.0 cm}
\setlength{\footskip}{1.0cm}
\setlength{\parindent}{17pt}
\setlength{\textwidth}{16 cm}
\setlength{\textheight}{21.5 cm}
\linespread{1.0}

\usepackage{thmtools}
\declaretheorem[style=definition,qed=$\diamondsuit$]{definition}
\declaretheorem[style=definition,qed=$\triangle$,sibling=definition]{example}
\declaretheorem[style=plain,sibling=definition]{theorem}
\declaretheorem[style=plain,sibling=definition,numbered=no,name=Theorem]{theoremnonum}
\declaretheorem[style=plain,sibling=definition]{lemma}
\declaretheorem[style=plain,sibling=definition]{proposition}
\declaretheorem[style=plain,numbered=no,sibling=definition,name=Proposition]{propositionnonum}
\declaretheorem[style=plain,sibling=definition]{corollary}
\declaretheorem[style=definition,qed=$\diamondsuit$,sibling=example]{claim}
\declaretheorem[style=definition,qed=$\diamondsuit$,sibling=claim]{remark}

\newtheorem*{claim*}{Claim}
\numberwithin{theoremalpha}{section}

\numberwithin{equation}{section}
\numberwithin{definition}{section}
\numberwithin{theorem}{section}
\numberwithin{proposition}{section}
\numberwithin{lemma}{section}
\numberwithin{example}{section}
\numberwithin{remark}{section}
\numberwithin{corollary}{section}

\newcommand{\locm}{\cc \times_{\rr_{\geq 0}} \cc}

\definecolor{ao(english)}{rgb}{0.0, 0.5, 0.0}
\newtheoremstyle{named}{}{}{\itshape}{}{\bfseries}{.}{.5em}{\thmnote{#3}#1}
\theoremstyle{named}
\newtheorem*{namedtheorem}{}
\numberwithin{equation}{section}
\setcounter{tocdepth}{2}

\title{Non-principal $T$-duality, generalized complex geometry and blow-ups}

\author{Gil R. Cavalcanti}
\address{Department of Mathematics, Utrecht University, 3508 TA Utrecht, The Netherlands}
\email{g.r.cavalcanti@uu.nl}
\author{Aldo Witte}
\address{Departement of Mathematics, Celestijnenlaan 200B, B-3001 Leuven (Heverlee), Belgium}
\email{aldowitte@hotmail.nl}

\linespread{0.94}

\begin{document}
\begin{abstract}
We extend the notion of $T$-duality to manifolds endowed with non-principal torus actions. The singularities of the torus action are controlled by a certain Lie algebroid, called the elliptic tangent bundle. Using this Lie algebroid, we explain how certain invariant generalized complex structures can be transported via $T$-duality. Along the way, we use the elliptic tangent bundle to define connections for these torus action, and give new insight to the classification of such actions by Haefliger-Salem.
\end{abstract}
\maketitle

\begin{center}
\emph{Dedicated to the memory of Gilles Castel}
\end{center}

\tableofcontents
\section{Introduction}
In this paper we will study three topics: $T$-duality for torus actions with fixed points, blow-ups of elliptic divisors and will apply those two to the study of generalized complex structures. While each one of these has a life on its own, the interplay between them makes them all the more interesting. 

\subsection{$T$-duality}
\emph{$T$-duality} is an equivalence between quantum field theories with very different properties. Its origins trace back to a paper by Sathiapalan \cite{PhysRevLett.58.1597} where it was noted that different compactifications with $S^1$-symmetry were related by, among other things, inversion of the radius of the circles. This relation is frequently referred to as taking ``dual circles'', ``exchange of tangent and cotangent directions'' or ``swap of winding and momentum''.  The precise relationship between (local) $T$-dual Riemannian structures was first understood by Buscher in \cite{Bus87} and was developed further by Ro\v{c}ek and Verlinde in \cite{RV92}. A global notion of $T$-duality was formulated in \cite{BEM04,BHM04} as an equivalence between {\it principal torus bundles}. In \cite{CG11} it was observed that $T$-duality can be understood as an isomorphism of Courant algebroids, therefore formalizing the operation ``exchange of tangent and cotangent directions'' and allowing for a systematic transport of geometric structures between $T$-dual spaces. Because this well established notion of $T$-duality only holds for principal torus bundles, we refer to it as {\it principal $T$-duality}.

Once we move away from principal circle bundles and start considering circle actions with fixed points problems quickly pop up. Namely, the inversion of radii, a hallmark of $T$-duality, implies that every fixed point on one side is matched by an open end of a manifold on the $T$-dual side. Performing $T$-duality this way amounts to removing the fixed points and only considering the resulting open manifold where the action is free. For example, taking the 2-sphere with rotation along the $z$-axis, the dual becomes an open cylinder with an incomplete metric. Versions of this phenomenon appear in several different places, most notably, when trying to produce SYZ-mirrors of Fano varieties \cite{MR2257391,MR3415066,hori2000mirror}. Further, the strategy of ``simply ignore fixed points'' may leave one with the impression that we are missing important behaviour that takes place ``at infinity''.

More ellaborate strategies that involve the geometry and nature of the fixed point set were considered by Bunke and Schick for actions with finite isotropy groups \cite{MR2246781}, Mathai and Wu for general circle actions \cite{MR2989461} and further extended by Pande \cite{MR3952356}. In all of these cases, topological results associated to T-duality were re-obtained, but the candidate for T-dual is often a singular space (orbi-spaces, stratified singular spaces or stacks). The issue of transfer of geometric structures (instead of topological) between T-dual spaces is not addressed because the techniques used are topological in nature and the candidate ``true'' T-dual space may even have higher dimension than the original one.

The main goal of the current article is to develop a new framework for $T$-duality, with the following properties:
\begin{itemize}
\item allows for torus actions which locally decompose as standard rotations of the plane (we call such actions {\it standard}, see Definition \ref{def:standard action}),
\item is a duality between smooth spaces,
\item transports invariant geometric structures compatible with the torus action,
\item has the usual properties of T-duality  such as isomorphism of twisted cohomology and Courant algebroids.
\end{itemize}

The new input that allows us to study these actions without compromising on compactness  or smoothness of the $T$-dual are \emph{elliptic divisors} and \emph{elliptic tangent bundles}. An elliptic divisor is an ideal, $I_{\abs{D}}$, of smooth functions, which comes naturally associated to a standard torus actions. The elliptic tangent bundle is a Lie algebroid,  $\elli$, defined as the vector fields which preserve the elliptic divisor. 
 
As with principal $T$-duality, $T$-duality for these actions (Definition \ref{def:t-duality general}) will provide an isomorphism of Courant algebroids, but now using the elliptic tangent bundle:
\begin{namedtheorem}[Theorem \ref{th:courantisoelliptic}]
Let $M,\hat{M}$ be manifolds with standard $T^k$ actions and let $\elli \rightarrow M, \ellihat \rightarrow \hat{M}$ denote the induced elliptic tangent bundles. Let $H \in \Omega^3_{T^k}(\elli),\hat{H} \in \Omega^3_{T^k}(\ellihat)$ be closed invariant forms. If $(M,H)$ and $(\hat{M},\hat{H})$ are $T$-dual then there exists an isomorphism of Courant algebroids:
\begin{equation}
(\elli\oplus \elli^*)/T^k \overset{\simeq}{\longrightarrow} (\ellihat \oplus \ellihat^*)/T^k.
\end{equation}
\end{namedtheorem}
This isomorphism of Courant algebroids allows us to transport geometric structures which are invariant under the torus action between on $\elli$ over $M$ and $\ellihat$ over $\hat{M}$. As we will see later, this has implications for $T$-duality of complex manifolds as well as further special classes of generalized complex manifolds.


Already in the case of principal torus bundles, $T$-duals may not exist \cite{BHM04}. In that case, the proof of existence is constructive and involves choices of connections and a switch between the Chern class of the torus bundle with a collection of cohomology classes determined by the background 3-form. To carry out a similar construction of $T$-duals for torus actions with fixed points, one first needs to develop the theory of connections and curvature for those actions.

\subsubsection{Torus actions and connections}
An integral part of this paper is the classification of torus actions on manifolds. For principal torus actions, the relevant datum is its characteristic class. For general torus actions, this question is studied by Haefliger and Salem in \cite{HS91}. They show that the action is classified by its local model and a class in $\check{H}^2(M;\mathbb{Z}^k)$. The class in $\check{H}^2(M;\mathbb{Z}^k)$ has a geometric interpretation: it is the obstruction to finding a global cross-section for the quotient map $M \to M/T^k$.

Although the class of \cite{HS91} resembles a Chern class, no connection approach exists. In this article we will show that one may use the elliptic tangent bundle to give a definition of connections for standard $T^k$-actions. The curvature of these connections then induces the class of Haefliger-Salem in the following way (see Proposition \ref{prop:comparison} for a precise statement):
\begin{propositionnonum}
Let $M, M'$ be manifolds endowed with standard $T^k$-actions and common orbits space $B$ for which the torus actions are locally equivalent. Let $\Theta,\Theta'$ be elliptic connection one-forms on $M$ respectively $M'$. Then the class $[d\Theta-d\Theta']  \in H^2(A_{\partial B};\mathfrak{t}^k)$ (the logarithmic cohomology of the base with values in the Lie algebra of $T^k$) is representable by a smooth form, and descends to the Haefliger-Salem class of $M'$ in $\check{H}^2(B;\mathfrak{t}^k)$.
\end{propositionnonum}

With the notions of connection and curvature at hand we can move on to produce a construction of a $T$-duals, therefore establishing conditions that ensure their existence (c.f. Theorem \ref{prop:Texistence}). The $T$-dual we construct has a $T^k$-action which is locally equivalent to the original one. Yet we also observe that there are other $T$-duals to $(M,H)$ which do not satisfy this property, hence the fixed point set gives rise to a new source of non-uniqueness of $T$-duals.

%
%

\subsection{Blow-ups} Blow-ups come from complex/algebraic geometry as a method to resolve singularities, but have encountered applications in the study other geometric structures specially as a way to construct new examples \cite{MR772133,MR2511043,BCD16}.

The minimum amount of data needed to construct canonically the blow-up of a space at a point is a holomorphic ideal for that point \cite{BCD16}. As it is, holomorphic ideals, $\mathcal{I}_D$, are the heart and soul of complex log tangent bundles. Further, there is a close connection between complex log and elliptic tangent bundles: complex log tangent bundles are in correspondence with elliptic tangent bundles together with coorientations \cite{CG17}. So, after a finite set of choices, there is a canonical blow-up construction for manifolds with elliptic divisors. Not only that but the blow-up itself inherits canonically an elliptic divisor and in many ways the blow-up manifold is indistinguishable from the original one (see Theorem \ref{th:blowupelliptic} for a precise statement).

\begin{theoremnonum}
Let $I_{\abs{D}}$ be an elliptic divisor, and let $\elli$ be the associated elliptic tangent bundle. Let $Y \subset D$ be a submanifold compatible with $D$ in a suitable sense. Then after a choice of coorientations there is a canonical blow-up $\tilde{M}$ of $M$ along $Y$ and there exists a natural elliptic divisor $I_{\abs{\tilde{D}}}$ on $\tilde{M}$ such that the blow-down map $p : \tilde{M} \rightarrow M$ provides a Lie algebroid submersion
\begin{equation*}
p : \elliti \rightarrow \elli.
\end{equation*}
\end{theoremnonum}
This means in particular that any structure on $\mathcal{A}_{|D|}$ can be pulled back to $\tilde{\mathcal{A}}_{|\tilde{D}|}$.

When $M$ comes equipped with a torus action and $Y = \set{*}$ is a fixed point of this action, the blow-up $\tilde{M}$ naturally inherits a torus action and we can compare the constructions of $T$-duality and blow-up:
\begin{namedtheorem}[Proposition \ref{prop:blow-upT}]
$T$-duality and blowing up commute.
\end{namedtheorem}
This procedure will allow us to construct many examples of $T$-dual manifolds and transport geometric structures between them.

\subsection{Generalized complex structures}
As an illustration of how to transport structures between $T$-dual spaces, we will study how generalized complex structures behave in this framework. The easy result here is that Theorem \ref{th:courantisoelliptic} allows us to transport invariant structures between $\elli\oplus \elli^*$ and $\ellihat\oplus \ellihat^*$, which is directly analogous to what happens with principal $T$-duality. The new insight here is that this transport in some cases may happen in disguise and associated to a {\it lifting} question.

One interesting phenomenon we encounter is the following: Given a manifold with elliptic divisor $(M,\mathcal{I}_{|D|})$ and a generalized complex structure $\J$ on $M$, under favourable conditions we can lift $\J$ canonically to a generalized complex structure $\J_\mathcal{A}$ on $\elli$. In some cases, this switch can be subtle enough that goes unnoticed. For example, if $(M,I)$ is a complex manifold and $\mathcal{I}_{|D|}$ is the elliptic divisor associated to a complex hypersurface $D \subset M$, then $I$ lifts to a complex structure on $\elli$. At the other extreme, the type of structure that lifts to a symplectic structure on the elliptic tangent bundle is a stable generalized complex structure on $M$ \cite{CG17,CKW20} and this switch is very visible because stable structures would normally be recognised as singular symplectic structures on $M$.

Pairing this remark with the framework we developed for $T$-duality, we produce examples with the following characteristics: we look for a manifold with a torus action compatible elliptic divisor $(M,\mathcal{I}_{|D|})$ together with  a generalized complex structure $\J$ on $M$ such that $\J$ lifts to a generalized complex structure, $\J_{\elli}$ on $\elli$. We then transport $\J_{\elli}$ to a generalized complex structure $\hat{\J}_{\elli}$ on $\ellihat$ via $T$-duality and study when $\hat{\J}_{\elli}$ is itself a lift of a generalized complex structure $\hat{\J}$ on $\hat{M}$. When this happens, it makes sense to call $\hat{\J}$ the structure $T$-dual  to $\J$.  We also notice that the condition that a structure can be lifted to $\elli$ is a very restrictive one and therefore examples satisfying all the hypotheses above are rare. Yet, once one example is found, we can use the theory of blow-ups within generalized complex geometry \cite{BCD16} to produce infinite families of $T$-dual structures satisfying the lifting conditions described above. 

Stable generalized complex structures can be viewed as symplectic structures on $\elli$, and can hence be transported in this manner. There is however an intriguing difference with $T$-duality for principal torus actions: A generalized complex structure on $M$ which lifts to $\elli$ will always induce a generalized complex structure on the $T$-dual Lie algebroid $\ellihat \rightarrow \hat{M}$, however it might not induce a generalized complex structure on $M$. We will, for instance, encounter complex structures on the elliptic tangent bundle on manifolds which are not almost complex.

\subsection{Organisation of the paper}
This paper is organized as follows. In Section \ref{sec:backgroud} we will recall the notions of elliptic divisors, and their associated Lie algebroids. Furthermore, we recall how stable generalized complex structures can be viewed as symplectic structures on Lie algebroids. More generally, we discuss lifting generalized complex structures to Lie algebroids. In Section \ref{sec:classification} we recall the classification of torus actions by Heafliger-Salem. Afterwards we introduce the notion of a connection for a standard torus action, and show how these can be used to understand this classification. In Section \ref{sec:singgeometry} we study singular fibre products, which we will encounter in the description of $T$-duality. In Section \ref{sec:Tduality} we introduce the the notion of non-principal $T$-duality, using the Lie algebroids from Section \ref{sec:backgroud}. In Section \ref{sec:blowups} we show that the elliptic tangent bundle interacts very well with (generalized) complex blow-ups, and show that $T$-dualizing commutes with blowing-up. Finally, in Section \ref{sec:examples} we discuss concrete examples.

\subsection{Acknowledgements}

The second author was supported by the NWO through the UGC Graduate Programme (Netherlands), and through the FWO and FNRS under EOS project G0H4518N, and FWO Project G083118N (Belgium). The authors would like to thank Maarten Mol and \'Alvaro del Pino for useful discussions. The authors are grateful to the organizers of the workshop ``Generalized Geometry in Interaction'', Madrid 2022, were a preliminary version of this article was presented.

\section{Divisors and Lie algebroids; generalized complex structures and their lifts}\label{sec:backgroud}

As we mentioned in the introduction, the main objective of this paper is to extend $T$-duality to torus actions with fixed points without giving up on compactness of the spaces involved. The object that will allow us to accomplish this task are Lie algebroids, specifically Lie algebroids associated to divisors as introduced in \cite{CG17,CKW20}. Not only will these allow us to lift torus actions, but also we will arrive at the very natural problem of lifting geometric structures from the underlying manifold to the Lie algebroid.

In this section give the first step towards our goal and recall the theory of divisors and Lie algebroids (for more details, see \cite{CG17,CKW20}). We also introduce several relevant geometric structures, including generalized complex structures, and their lifts to these Lie algebroids. 

To fix notation, we recall:

\begin{definition}\label{def:lie algebroid}
A \textbf{Lie algebroid} $A \rightarrow M$ consists of a vector bundle $A$ over $M$, together with a Lie bracket $[\cdot,\cdot]$ on $\Gamma(\A)$, a vector bundle map $\rho : \A \rightarrow TM$ (called the \textbf{anchor}) such that the following Leibniz identity is satisfied:
\begin{equation*}
[v,fw] = f[v,w] + \mathcal{L}_{\rho(v)}(f)w, \quad \text{for all } v,w \in \Gamma(\A). \hfill \qedhere
\end{equation*} 
\end{definition}

\subsection{Divisors}
Inspired by divisors in algebraic geometry, we will make use of the following notion to deal with singularities of geometric structures:
\begin{definition}
A \textbf{real (resp. complex) divisor} on $M$ is a locally principal ideal $I \subset C^{\infty}(M;\rr)$ (respectively $C^{\infty}(M;\cc))$ which is locally generated by functions with a nowhere dense zero-set.
\end{definition}
\begin{definition}
Let $(M,I_M)$ and $(N,I_N)$ be manifolds together with divisors. A smooth map $\varphi : M \rightarrow N$ is a \textbf{morphism} of divisors if $\varphi^*I_N = I_M$. It is said to be a \textbf{diffeomorphism} of divisors if $\varphi$ is moreover a diffeomorphism.
\end{definition}

We will encounter three types of divisors in this article:
\begin{definition}\label{def:divisors}
Let $M^n$ be a smooth manifold.
\begin{itemize}
\item A \textbf{real log divisor} on $M$ is a real divisor, $I_Z$, such that for each $p \in M$ there are coordinates $(x_1,\ldots,x_n)\colon U \to \rr^n$ centred at $p$ and an integer $k$ such that
\begin{equation*}
I_Z|_U = \inp{x_1\cdot \ldots \cdot x_k} .
\end{equation*}
\item A \textbf{complex log divisor} on $M$ is a complex divisor $I_D$, such that for each $p \in M$ there are coordinates $(z_1,\ldots,z_k,x_{2k+1},\ldots,x_n)\colon U \to \cc^k \times \rr^{n-2k}$ centred at $p$ such that
\begin{equation*}
I_{D}|_U = \inp{z_1\cdot \ldots \cdot z_k}.
\end{equation*}
\item An \textbf{elliptic divisor} on $M$ is a real divisor $I_{\abs{D}}$, such that for each $p \in M$ there are coordinates $(x_1,\ldots,x_n)\colon U \to \rr^n$ centred at $p$ and an integer $k$ such that
\begin{equation*}
I_{\abs{D}}|_U = \inp{r_1^2\cdot \ldots \cdot r_k^2},
\end{equation*}
with $r_i^2 = x_{2i-1}^2 + x_{2i}^2$.
\end{itemize}
In these three cases, the number $k$ is the \textbf{multiplicity} of $p$. The vanishing set of the ideal is called the \textbf{vanishing locus}. We call each of these divisors \textbf{smooth} if their vanishing locus is an embedded smooth submanifold, that is, if the multiplicity of each point is either $0$ or $1$.
\end{definition}
In general, the vanishing locus is only (the image of) an immersed submanifold with transverse intersections, and is naturally stratified by the multiplicity of its points. It is convenient to introduce some notation to refer to the different strata of the vanishing locus, so if we let, say, $D$, denote the vanishing locus of any of the divisors above, the set of points with multiplicity at least $k$ will be denoted by $D(k)$ and  the set of points with multiplicity precisely $k$ will be denoted by $D[k]$. Each of the $D(k)$ is itself an immersed submanifold, and hence we may speak of:

\begin{definition}
Let $I_{\abs{D}}$ be an elliptic divisor, and let $\iota : N \rightarrow D(k)$ be an immersion. The image of a connected component of $N$ is called an \textbf{irreducible component} of $D(k)$.
\end{definition}

Given a complex log divisor $I_D$, one may consider its \textbf{associated elliptic divisor} defined by the condition $I_{\abs{D}}\otimes \cc = I_D \otimes \bar{I}_D$. In this case, $D[1]$ is automatically co-oriented by the differential of any local generator of $I_D$. Conversely, given an elliptic divisor $I_{\abs{D}}$ for which $D[1]$ is co-oriented, there exists a unique complex log divisor $I_{D}$ inducing $I_{\abs{D}}$ and the given co-orientation.

\subsection{Lie algebroids and residue maps}
Using the Serre-Swann theorem, one may show that vector fields preserving the divisors from Definition \ref{def:divisors} define vector bundles and the natural identification of sections of these vector bundles with vector fields makes these vector bundles into Lie algebroids:
\begin{definition}
Let $M$ be a manifold, and $I_Z,I_D,I_{\abs{D}}$ a real log, complex log, respectively elliptic divisor on $M$. Vector fields preserving these ideals define Lie algebroids:
\begin{itemize}
\item $\A_Z \rightarrow TM$, the \textbf{real log-tangent bundle}.
\item $\A_D \rightarrow T_{\cc}M$, the \textbf{complex log-tangent bundle}.
\item $\elli \rightarrow TM$, the \textbf{elliptic tangent bundle}. \hfill \qedhere
\end{itemize}
\end{definition}
As the divisors under consideration have local normal forms, so do have the associated Lie algebroids:
\begin{remark}\label{rem:locform}
In the coordinates of Definition \ref{def:divisors}, we have:
\begin{itemize}
\item $\Gamma(\A_Z) = \inp{x_1\partial_{x_1},\ldots,x_k\partial_{x_k},\partial_{x_{k+1}},\ldots,\partial_{x_n}}$.
\item $\Gamma(\A_D) = \inp{z_1\partial_{z_1},\partial_{\bar{z}_1},\ldots,z_k\partial_{z_k},\partial_{\bar{z}_k},\partial_{x_{2k+1}},\ldots,\partial_{x_n}}$.
\item $\Gamma(\elli) = \inp{r_1\partial_{r_1},\partial_{\theta_1},\ldots,r_k\partial_{r_k},\partial_{\theta_k},\partial_{x_{2k+1}},\ldots,\partial_{x_n}}$,
\end{itemize}
with $r_i\partial_{r_i} = x_{2i-1}\partial_{x_{2i-1}}+x_{2i}\partial_{x_{2i}}$ and $\partial_{\theta_i} = x_{2i-1}\partial_{x_{2i}} - x_{2i}\partial_{x_{2i-1}}$.
\end{remark}


For each of the Lie algebroids above, we obtain a corresponding Lie algebroid de Rham complex on the exterior power of the dual vector bundle.
The relation between complex and elliptic tangent bundles translates into the existence of maps between their corresponding de Rham complexes. Indeed, given a complex log divisor,  $I_D$, and associated elliptic divisor, $I_{\abs{D}}$, taking the imaginary part of a complex log form is a map $\Im^* \colon (\Omega^{\bullet}(\A_D),d_{\A_D}) \rightarrow (\Omega^{\bullet}(\elli),d_{\elli})$.

There are several residue maps associated to these Lie algebroids, obtained by taking the coefficients of certain singular generators. We quickly recall the ones we need in this article.
Given a smooth elliptic divisor, with $D$ co-orientable, it is explained in \cite{CG17} that there are residue maps:
\begin{align*}
\Res_q : (\Omega^{\bullet}(\elli),d_{\elli}) \rightarrow (\Omega^{\bullet-2}(D),d), \qquad \alpha &\mapsto \iota_D^*(\iota_{r\partial_r\wedge\partial_{\theta}}\alpha)\\
\Res_r : (\Omega^{\bullet}(\elli),d_{\elli}) \rightarrow (\Omega^{\bullet-1}(S^1ND),d), \qquad \alpha & \mapsto \iota_{r\partial_r}\alpha,
\end{align*}
where for $\Res_r$, $S^1ND$ is the sphere bundle of the normal bundle of $D$ and the Lie algebroid 1-form dual to $\partial_\theta$ is interpreted as fiberwise volume form on the bundle $S^1ND \to D$.

Let $I_{\abs{D}}$ be a general elliptic divisor with $D[1]$ co-orientable. The inclusion $\iota_{M\backslash D(2)} : M\backslash D(2) \rightarrow M$, pulls the elliptic divisor $I_{\abs{D}}$ back to a smooth elliptic divisor with vanishing locus $D[1]$. Hence we may define residue maps by composition:
\begin{align*}
\Res_q : \Omega^{\bullet}(\elli) \rightarrow \Omega^{\bullet-2}(D[1]), \qquad \alpha &\mapsto \iota_{D[1]}^*(\iota_{r\partial_r\wedge\partial_{\theta}}(\iota_{M\backslash D(2)}^*\alpha))\\
\Res_r : \Omega^{\bullet}(\elli) \rightarrow \Omega^{\bullet-1}(S^1ND[1]), \qquad \alpha & \mapsto \iota_{r\partial_r}(\iota^*_{M\backslash D(2)}\alpha),
\end{align*}
where $S^1ND[1]$ is the sphere bundle of the normal bundle of $D[1]$. There exists several higher order residue maps, however defining these very quickly becomes a combinatorial task. As we are mainly interested in two-forms, we will define:
\begin{definition}\label{def:higherellres}
Let $I_{\abs{D}}$ be an elliptic divisor, $\omega \in \Omega^2(\elli)$ and let $p \in D[2]$. Using the coordinates from Remark \ref{rem:locform} we define
\begin{equation*}
\Res_{r_ir_j}\omega(p) = \omega_p(r_i\partial_{r_i},r_j\partial_{r_j}), \quad \Res_{r_i\theta_j}\omega(p) = \omega_p(r_i\partial_{r_i},\partial_{\theta_j}), \quad \Res_{\theta_i\theta_j}(\omega)(p) = \omega_p(\partial_{\theta_i},\partial_{\theta_j}). \hfill \qedhere
\end{equation*}
\end{definition}
Note that these residues are only well-defined up to sign. Similarly, one may define residue maps for the real log tangent bundle. If $I_Z$ is a smooth real log divisor, we may define
\begin{equation*}
\Res : \Omega^{\bullet}(\A_Z) \rightarrow \Omega^{\bullet-1}(Z), \quad \alpha \mapsto \iota^*_Z(\iota_{x\partial_x}\alpha).
\end{equation*}
Again, for a general real log divisor we may define $\Res^1 : \Omega^{\bullet}(\A_Z) \rightarrow \Omega^{\bullet-1}(Z[1])$ by precomposing with the pullback $M\backslash D(2) \hookrightarrow M$. The higher residues are again somewhat involved to define. For two-forms we define:
\begin{equation}\label{eq:higherlogres}
\Res^2_{x_ix_j}(\omega)(p) = \omega_p(x_i\partial_{x_i},x_j\partial_{x_j}), \quad p \in Z[2].
\end{equation}

\subsection{Generalized complex structures}
In this section we recall the definition of generalized complex structures and focus our attention on the class of stable generalized complex structures called stable. We will recall that these structures are in correspondence with symplectic structures on the elliptic tangent bundle.

Given a manifold $M$, we form $\mathbb{T}M = TM \oplus T^*M$. This bundle is endowed with the natural symmetric pairing given by evaluation of forms on vectors.
\begin{definition}
A \textbf{generalized complex structure} on a manifold $M$, is a pair $(\mathbb{J},H)$, where  $H \in \Omega^3(M)$ is  a closed real 3-form and $\mathbb{J}$ is a complex structure on $\mathbb{T}M$ orthogonal with respect to the natural paring and whose $+i$-eigenbundle, $L \subset \mathbb{T}_\cc M$ is involutive with respect to the Courant--Dorfmann bracket:
\begin{equation*}
\Cour{X+\xi, Y + \eta}_H = [X,Y] + \mathcal{L}_X\eta - \iota_Yd\xi + \iota_X\iota_YH, \quad X+ \xi,Y + \eta \in \mathbb{T}M.\hfill\qedhere
\end{equation*}
\end{definition}

Given a generalized complex structure one may decompose it as a block matrix with respect to the splitting of $\mathbb{T}M$. The skew-symmetry of $\mathbb{J}$ ensures this decomposition is of the form
\begin{equation*}
\mathbb{J} = \begin{pmatrix}
J & \pi_{\mathbb{J}}^{\sharp} \\
\sigma^{\flat} & -J^*,
\end{pmatrix}, \qquad J \in  \text{End}(TM), \sigma \in \Omega^2(M), \pi_{\mathbb{J}} \in \mathfrak{X}^2(M).
\end{equation*}
The bivector $\pi_{\mathbb{J}}$ is Poisson. Two generalized complex structures $(\mathbb{J}_1,H_1),(\mathbb{J}_2,H_2)$ are said to be \textbf{gauge equivalent}, if there exists a two-form $B \in \Omega^2(M)$ such that $dB = H_1-H_2$ and
\begin{equation*}
\mathbb{J}_2 = \begin{pmatrix}
1 & 0 \\
B^{\flat} & 1
\end{pmatrix}
\mathbb{J}_1
\begin{pmatrix}
1 & 0 \\
-B^{\flat} & 1
\end{pmatrix}.
\end{equation*}
If two generalized complex structures $\mathbb{J}_1,\mathbb{J}_2$ are gauge equivalent, then $\pi_{\mathbb{J}_1} = \pi_{\mathbb{J}_2}$.

The Clifford action of $X + \xi \in \mathbb{T}_{\cc}M$ on $\rho \in \wedge^nT_{\cc}M$ is defined via $(X+\xi) \cdot \rho = \iota_X \rho + \xi \wedge \rho$. Given a generalized complex structure, one may define its \textbf{canonical bundle} $K \subset \wedge^{\mathrm{top}}T_{\cc}M$ via
\begin{equation*}
L = \set{u \in \mathbb{T}_{\cc}M : u\cdot K = 0}.
\end{equation*}
The dual bundle $K^*$ has a natural section $s$, called the \textbf{anticanonical} section which is defined via $s : \Gamma(K) \rightarrow C^{\infty}(M;\cc), \rho \mapsto \rho_0$.
\begin{definition}
A generalized complex structure $(\mathbb{J},H)$ on $M$ is called \textbf{stable} if $s(\Gamma(K)) \subset C^{\infty}(M;\cc)$ defines a complex log divisor.
\end{definition}

\begin{example}
Let $(M,J,\pi)$ be a holomorphic Poisson manifold with $\pi = \pi_R + i \pi_I$ and denote by $\pi_I^\sharp\colon T^*M \to TM$ the map associated to $\pi_I$. Then
\[ \mathbb{J}_{J,\pi} :=
\begin{pmatrix}
-J &4 \pi_I^{\sharp} \\
0 & J^*
\end{pmatrix}
\]
defines a generalized complex structure. If furthermore $\wedge^n\pi(\Gamma(\wedge^{\mathrm{top}}(T^*M)) \subset C^{\infty}(M;\cc)$ defines a complex log divisor, $\mathbb{J}_{J,\pi}$ defines a stable generalized complex structure. Holomorphic Poisson structures with these properties are called \emph{log symplectic}.
\end{example}

These generalized complex structures can be described as symplectic structures on Lie algebroids:
\begin{theorem}[\cite{CKW20}]
There is a one-to-one correspondence between gauge equivalence classes $[(\mathbb{J},H)]$ of stable generalized complex structures on $M$ and symplectic forms $\omega \in \Omega^2(\elli)$ on the elliptic tangent bundle with $D[1]$ co-orientable and satisfying the following equations:
\begin{itemize}
\item $\Res^q(\omega) = 0$
\item $\Res_{r_i\theta_j}\omega = \Res_{\theta_ir_j}\omega$
\item $\Res_{r_ir_j}\omega = -\Res_{\theta_i\theta_j}\omega$
\end{itemize}
Given $(\mathbb{J},H)$, the associated $\omega \in \Omega^2(\elli)$ is given by $\pi_{\mathbb{J}}^{-1}$.
\end{theorem}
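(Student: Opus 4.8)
The plan is to establish the correspondence first on the open dense locus $M \setminus D$, where stability forces $\mathbb{J}$ to be of symplectic type, and then to analyse the transverse behaviour along $D$ using the local normal forms of Remark~\ref{rem:locform}. Since gauge equivalence by a $B$-field leaves $\pi_{\mathbb{J}}$ unchanged, the Poisson bivector is an invariant of the class $[(\mathbb{J},H)]$; away from $D$ it is nondegenerate, so $\omega_0 := \pi_{\mathbb{J}}^{-1}$ is an ordinary symplectic form there and $\mathbb{J}$ is the $B$-field transform of $\omega_0$ for the representative $B$ that trivialises $H$. The content of the theorem is thus to understand how $\omega_0$ degenerates along $D$ and to recognise that degeneration as smooth and nondegenerate in the elliptic sense.

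For the forward direction, I would first use stability: the anticanonical section cuts out a complex log divisor $I_D$, whose associated elliptic divisor $I_{\abs{D}}$ produces the Lie algebroid $\elli \to TM$, and the co-orientation of $D[1]$ is the one carried by $I_D$. The key step is to show that $\pi_{\mathbb{J}}$, viewed as a bundle map $\elli^* \to \elli$, is an isomorphism; this is a local computation near $D$ that follows from the normal form of a stable structure, and it lets me define $\omega := \pi_{\mathbb{J}}^{-1} \in \Omega^2(\elli)$ as a genuine smooth elliptic two-form. Closedness $d_{\elli}\omega = 0$ is inherited from the Poisson condition on $\pi_{\mathbb{J}}$: on $M \setminus D$ it is the classical statement that the inverse of a nondegenerate Poisson bivector is closed, and it extends across $D$ by density once $\omega$ is known to be smooth on $\elli$. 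It then remains to compute the residues of $\omega$ along $D[1]$ and $D[2]$ in the coordinates $(r_i,\theta_i)$ and to verify the three stated identities; these reflect precisely the fact that $\pi_{\mathbb{J}}$ is the imaginary part of a structure compatible with the complex coordinates $z_i = r_i e^{i\theta_i}$ transverse to $D$.

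For the reverse direction, given $\omega \in \Omega^2(\elli)$ symplectic and satisfying the residue conditions, I would invert $\omega$ to recover a bivector $\pi$ on $M$. Nondegeneracy of $\omega$ on $\elli$ guarantees that $\pi$ is smooth on $M \setminus D$ and that it blows up in a controlled elliptic manner along $D$; the conditions $\Res_{r_i\theta_j}\omega = \Res_{\theta_ir_j}\omega$ and $\Res_{r_ir_j}\omega = -\Res_{\theta_i\theta_j}\omega$ are exactly what is needed for the resulting endomorphism of $\mathbb{T}M$ to square to $-1$ and be orthogonal, i.e. to define an almost generalized complex structure $\mathbb{J}$, while $\Res^q(\omega) = 0$ fixes the leading radial term so that no spurious pole survives. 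Integrability of $\mathbb{J}$, with respect to an $H$ built from a primitive of the smooth part of $\omega$, follows from $d_{\elli}\omega = 0$, and stability is automatic because $I_{\abs{D}}$ together with its co-orientation recovers the complex log divisor cut out by the anticanonical section.

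I expect the main obstacle to be the residue step, namely proving that the three identities are simultaneously necessary and sufficient. Necessity amounts to showing that the Hermitian-type symmetry forced by $\mathbb{J}^2 = -1$ survives passage to the singular inverse $\omega = \pi_{\mathbb{J}}^{-1}$; sufficiency is the harder half, since one must reconstruct a genuinely smooth $\mathbb{J}$ on $\mathbb{T}M$ from data that is a priori only smooth as a section over $\elli$, and verify that the assembled endomorphism extends smoothly across $D$. Organising this as a pointwise linear-algebra statement at $D[1]$ and $D[2]$, where the conditions decouple into the blocks transverse to each stratum, and then checking smoothness of the reassembled structure is where I expect the real work to lie; the gauge-equivalence bookkeeping and the closedness--integrability dictionary are comparatively formal once the pointwise picture is in place.
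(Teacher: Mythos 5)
First, a point of order: the paper you are reading does not prove this theorem at all --- it is imported wholesale from \cite{CKW20} (building on \cite{CG17} in the smooth-divisor case) --- so your proposal must be measured against the argument given there, and that argument runs through an intermediate object your plan never introduces: the complex log symplectic form. This omission is a genuine gap, and it is most visible in your converse direction. A generalized complex structure is \emph{not} determined by its Poisson bivector, so from $\pi = \omega^{-1}$ alone there is no ``resulting endomorphism of $\mathbb{T}M$'' whose squaring to $-1$ the residue conditions could enforce: on $M\setminus D$ every $B$-field transform of the symplectic structure $\mathbb{J}_{\omega_0}$ has the same bivector, squares to $-1$, and is orthogonal, and these exhaust all structures with that bivector, so up to gauge there is exactly one class there and the residue conditions do no pointwise work. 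The entire content of the converse is which \emph{singular} representative extends smoothly across $D$ and which $H$ it produces. The mechanism in \cite{CKW20} is: the co-orientation of $D[1]$ promotes $I_{\abs{D}}$ to a complex log divisor $I_D$, and the three residue conditions are precisely the characterization of the image of $\Im^* \colon \Omega^2(\A_D) \rightarrow \Omega^2(\elli)$. One then chooses local closed $\sigma_\alpha \in \Omega^2(\A_D)$ with $\Im^*\sigma_\alpha = \omega$ and defines the structure by the spinor $\rho_\alpha = z_\alpha e^{\sigma_\alpha}$, with $z_\alpha$ a local generator of $I_D$; nondegeneracy of $\omega$ gives $d\rho_\alpha = (X_\alpha + \xi_\alpha)\cdot \rho_\alpha$ with $X_\alpha + \xi_\alpha$ smooth (integrability and stability), while the ambiguity $\sigma_\alpha - \sigma_\beta$, a closed smooth real two-form, is simultaneously the source of the gauge equivalence and of the \v{C}ech-type construction of $H$. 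The singular real part $\Re\sigma$ is thus an extra datum that must be \emph{constructed}; the residue conditions guarantee its existence, which is a different statement from the one you assign to them.

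Your forward direction has a related defect: you justify the key claim that $\pi_{\mathbb{J}}$ lifts to a nondegenerate section of $\wedge^2\elli$ by appeal to ``the normal form of a stable structure'', but in \cite{CG17,CKW20} that normal form is a \emph{corollary} of this theorem (obtained by a Moser argument on the elliptic symplectic side), so as written your argument is circular; if you instead mean Bailey's local holomorphicity theorem, you would need to invoke it explicitly, and the cited proof deliberately avoids it. There, the lift and nondegeneracy of $\pi_{\mathbb{J}}$ are again extracted from the spinor: writing a local trivialization $\rho = \rho_0 + \rho_2 + \cdots$ of the canonical bundle, stability says $\rho_0$ generates $I_D$, integrability forces $\sigma = \rho_2/\rho_0$ to be a closed complex log symplectic form, and then $\omega = \Im^*\sigma = \pi_{\mathbb{J}}^{-1}$. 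Your easy steps are fine --- gauge invariance of $\pi_{\mathbb{J}}$, and propagation of $d_{\elli}\omega = 0$ from $M\setminus D$ by density --- but both hard steps, in both directions, factor through the complex log form, and a proof that genuinely avoids it would require a new idea that your outline does not supply.
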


\subsection{Lifting generalized complex structures}
Lifting Poisson structures to Lie algebroids is a well-established field of study. However, lifts of generalized complex and Dirac structures to Lie algebroids have not yet received much attention. As we will see, the questions of which structures lift to Lie algebroids and which structures descend from Lie algebroids arise naturally.

\subsubsection{Severa class}
Let us first shortly study Courant algebroids where we replace the role of the tangent bundle by a general Lie algebroid.
Given any Lie algebroid $\A$ and $H \in \Omega^3(\A)$ closed one may define the standard $\A$-Courant algebroid, in the usual manner. We set $\mathbb{A} = \A\oplus \A^*$ and for $X+\xi,Y+\eta \in \Gamma(\A\oplus \A^*)$.
\begin{align*}
\inp{X+\xi,Y+ \eta} &= \frac{1}{2}(\eta(X) + \xi(Y)),\\
 \Cour{X+\xi,Y+\eta}_H &= [X,Y] + \mathcal{L}_X\eta - \iota_Yd\xi +\iota_X\iota_YH.
\end{align*}
This notion, which is a particular case of a Lie bialgebroid, can be axiomatised as follows:
\begin{definition}
An \textbf{exact $\A$-Courant algebroid} is a vector bundle extension
\begin{equation}\label{eq:Courant}
0 \rightarrow \A^* \overset{\pi^*}{\rightarrow} \mathbb{A} \overset{\pi}{\rightarrow} \A \rightarrow 0,
\end{equation}
together with a non-degenerate pairing $\inp{\cdot,\cdot}$ on $\mathbb{A}$, and a bracket $\Cour{\cdot,\cdot}$ on $\Gamma(\mathbb{A})$ satisfying:
\begin{itemize}
\item $\Cour{e_1,\Cour{e_2,e_3}} = \Cour{\Cour{e_1,e_2},e_3} + \Cour{e_2,\Cour{e_1,e_3}}$
\item $\pi(\Cour{e_1,e_2}) = [\pi(e_1),\pi(e_2)]_{\A}$
\item $\Cour{e_1,fe_2} = f\Cour{e_1,e_2} + \mathcal{L}_{\rho(\pi(e_1))}(f)e_2$
\item $\mathcal{L}_{\rho(\pi(e_1))}(\inp{e_2,e_3}) = \inp{\Cour{e_1,e_2},e_3} + \inp{e_2,\Cour{e_1,e_3}}$
\item $\Cour{e_1,e_1} = \pi^*d_{\A}\inp{e_1,e_1}$. \hfill \qedhere
\end{itemize}
\end{definition}
Given an exact $\A$-Courant algebroid, the composition $\rho \comp \pi : \mathbb{A} \rightarrow TM$ together with $\inp{\cdot,\cdot}$ and $\Cour{\cdot,\cdot}$ will give $\mathbb{A}$ the structure of a non-exact Courant algebroid. 

Just as for ordinary exact Courant algebroids, isotropic splittings $s : \A \rightarrow \mathbb{A}$ of \eqref{eq:Courant} are classified by closed three-forms $H \in \Omega^3(\A)$ appearing as the curvature
\begin{equation*}
\iota_X\iota_YH = s^*[s(X),s(Y)].
\end{equation*} 
The difference between two splittings $s-s'$, defines a two-form $B \in \Omega^2(\A)$ via $(s-s')(X) = \iota_XB$. Consequently $[H] \in H^3(\A)$ is independent of the isotropic splitting and determines the $\A$-Courant algebroid structure completely. Hence, this cohomology class is the natural extension of the \textbf{Severa-class} of exact Courant algebroids.

\subsubsection{Lifting Dirac structures}

The answer to the question on how to lift structures takes different forms depending on what one is trying to lift. Here we will focus on Dirac and generalized complex structures, whose definitions on $\mathbb{A}$ is completely analogous to the definition on $\mathbb{T}M$: 

\begin{definition}
Let $\A \rightarrow M$ be a Lie algebroid. An \textbf{$\A$-Dirac structure} is a subbundle $E_{\A} \subset \mathbb{A}$ together with a three-form $H \in \Omega^3_{\rm cl}(\A)$, such that $E_{\A}$ is isotropic with respect to the pairing and involutive with respect to the bracket on $\mathbb{A}$.

An \textbf{$\A$-generalized complex structure} is a pair $(\J_{\A},H_\A)$ where $H_\A \in \Omega^3_{\rm cl}(\A)$ and $\J_{\A}$ is an orthogonal  complex structure $\J_{\A}$ on $\mathbb{A}$ for which  $\mathbb{A}^{1,0}$, the $+i$-eigenbundle, defines an $\mathbb{A}_{\cc}$-Dirac structure.
\end{definition}

%
%

\begin{definition}\label{def:Diraclift}
Let $\A \rightarrow M$ be a Lie algebroid. An $\A$-Dirac structure $(E_{\A},H_{\A})$ is a \textbf{lift} of a Dirac structure $(E,H)$ on $M$   if $H_{\A} = \rho^*H$ and $\mathfrak{F}_{\rho}(E_{\A}) = E$.
\end{definition}
Here $\mathfrak{F}_{\rho}$ denotes the Dirac pushforward via the anchor map.

This notion matches the notion of a lift of a Poisson structure, when that is considered as a Dirac structure. Indeed, given   a Lie algebroid $\A \rightarrow M$, a Poisson structure $\pi_{\A}\in \Gamma(\wedge^2\A)$ (that is $[\pi_{\A},\pi_{\A}] =0$) and a Poisson structure  $\pi \in \mathfrak{X}^2(M)$ one can readily see that  $\rho_*\pi_{\A} = \pi$ if and only if $E_{\pi_{\A}} = {\rm graph}(\pi_{\A}) \subset \mathbb{A}$ is a lift of $E_{\pi} = {\rm graph}(\pi)$.

For generalized complex structures, however, working only with the push-forward map is too restrictive, so the notion of lift requires some tweaking.

\begin{definition}Let $\rho\colon \A \to TM$ be a Lie algebroid over $M$\begin{itemize}
\item $\tilde X+ \tilde \xi \in \mathbb{A}$ is \textbf{related} to $X + \xi\in \mathbb{T}M$ if $\rho(\tilde X) = X$ and $\rho^*\xi = \tilde\xi$.
\item A generalized complex structure  $(\J_\A,H_\A)$ on $\mathbb{A}$ is a \textbf{lift} of a generalized complex structure $(\J,H)$ on $M$ if whenever $\tilde X+ \tilde \xi\in \mathbb{A}$ is related to $X + \xi\in \mathbb{T}M$, $\J_\A (\tilde X+ \tilde \xi)$ is related to $\J (X + \xi)$.\qedhere
\end{itemize}
\end{definition}

%
%
\begin{example}\label{def:complexlift}
In the situation above, if  $J \colon TM \rightarrow TM$  and $J_{\A} \colon \A \to \A$ are complex structures we can promote them to generalized complex structures and ask when $J_{\A}$ is a lift of $J$. It turns out this happens if and only if the following diagram commutes
\begin{center}
\begin{tikzcd}
\A \ar[r,"J_{\A}"] \ar[d,"\rho"] & \A \ar[d,"\rho"] \\
TM \ar[r,"J"] & TM
\end{tikzcd}
\end{center}
which agrees with what one would like to call the lift of a complex structure. 
\end{example}



%
\begin{remark}
In this paper we will work with Lie algebroids $\A$ for which the locus $U$ where $\rho\colon \A \to TM$ is an isomorphism of vector bundles is dense in $M$. In this case, we see that $\J_{\A}$ is a lift of $\J$ on $U$ if and only if $\J_\A|_U= (\rho|_U) \comp \J\comp (\rho|_U)^{-1}$. If that holds, by continuity $\J_\A$ is related to $\J$ in all of $M$. Therefore we see that the question of whether $\J$ admits a lift to a Lie algebroid with dense isomorphism locus becomes the question of existence of a smooth extension of $(\rho|_U) \comp \J\comp (\rho|_U)^{-1}$ beyond the isomorphism locus.
\end{remark}
 
\begin{example}
It is also good to keep in mind that there are (generalized) complex structures on Lie algebroids, which do not arise as lifts of structures on $M$.
Consider, for example, the elliptic tangent bundle $\elli$ on $\cc^2$, associated to the ideal $\inp{r_1^2r_2^2}$. The following defines a complex structure on $\elli$:
\begin{align*}
z_1\partial_{z_1} \mapsto z_2\partial_{z_2},& \quad \bar{z_1}\partial_{\bar{z_1}} \mapsto \bar{z_2}\partial_{\bar{z_2}},\\
z_2\partial_{z_2} \mapsto -z_1\partial_{z_1},& \quad \bar{z_2}\partial_{\bar{z_2}} \mapsto -\bar{z_1}\partial_{\bar{z_1}}.
\end{align*}  
However, the induced complex structure on $\cc^2\setminus D$ does not extend over $D$, as we have, for example,
\begin{equation*}
\partial_{z_1} \mapsto \frac{z_2}{z_1} \partial_{z_2}. \hfill \qedhere
\end{equation*}
\end{example}

\section{Classification of torus actions and connections}\label{sec:classification}
Having introduced the geometric structures of interest last section, now we move on to study manifolds with the relevant torus actions. We begin this section by recalling the classification of certain torus actions by Haefliger and Salem in \cite{HS91}. The relevant cohomology class for this classification is $\check{H}^2(B;\zz^n)$, where $B$ is the quotient space of the torus action. Although the Haefliger--Salem class greatly resembles the Chern class of a principal $S^1$-bundle, no such description exists at the moment in terms of connections and curvature. We will use the elliptic tangent bundle to provide a connection point of view on this class.
\subsection{Haefliger--Salem classification}
The first ingredient of the classification is the following result:
\begin{theorem}[Theorem 3.1 in \cite{HS91}]\label{th:HS1}
Let $M$ be a manifold with a $T^k$-action, and let $\pi \colon M \rightarrow B$ denote the quotient map.
Any diffeomorphism $h\colon M \to M$ commuting with the action of $T^k$ and preserving all orbits is of the form $h(x) = f(\pi(x))\cdot x$, where $f \in C^{\infty}(B;T^k)$.
\end{theorem}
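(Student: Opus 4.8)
The plan is to construct $f$ fibrewise, check it is well defined on orbits, verify smoothness where the action is free, and then show that the apparent ambiguity at singular orbits is resolved by continuity. First I would fix $h$ as in the statement (so $\pi\comp h=\pi$ and $h$ is $T^k$-equivariant) and note that for every $x\in M$ orbit-preservation yields some $t\in T^k$ with $h(x)=t\cdot x$, this $t$ being unique modulo the stabiliser $\Gamma_x\subset T^k$. Since $T^k$ is abelian, $\Gamma_x$ is constant along the orbit through $x$, and equivariance gives $h(s\cdot x)=s\cdot h(x)=t\cdot(s\cdot x)$ for all $s$, so the coset $t\,\Gamma_x$ depends only on $\pi(x)\in B$. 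Assuming the action effective and $M$ connected, the principal (here free) locus $M_{\mathrm{free}}\subset M$ is open and dense, the stabiliser there is trivial, and so $t$ is genuinely unique; this lets me \emph{define} $f(\pi(x))=t$ for $\pi(x)$ in the open dense set $B_0:=\pi(M_{\mathrm{free}})$, recalling throughout that $B=M/T^k$ carries the quotient differentiable structure whose functions are the invariant functions on $M$.

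Next I would prove smoothness of $f$ over $B_0$. There $\pi$ is a principal $T^k$-bundle, so locally $\pi^{-1}(W)\cong T^k\times W$ with $T^k$ acting by translation on the first factor. Equivariance together with orbit-preservation then forces $h$ to take the form $h(t,w)=(t\cdot f(w),w)$, and reading off the $T^k$-component of the smooth map $w\mapsto h(e,w)$ shows that $f|_W$ is smooth. Hence $f\colon B_0\to T^k$ is a well-defined smooth map satisfying $h(x)=f(\pi(x))\cdot x$ on all of $\pi^{-1}(B_0)$, and what remains is to extend $f$ smoothly across the singular strata and to verify the formula there.

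The main obstacle is precisely this extension across a singular orbit $O=T^k\cdot x_0$ with $\Gamma:=\Gamma_{x_0}\neq\set{e}$: a priori the unambiguous value of $f$ on the dense set $B_0$ could fail to have a limit at $b_0:=\pi(x_0)$, since near $O$ the element $t$ is only defined modulo the nontrivial group $\Gamma$, and distinct approaches to $b_0$ might land in different elements of a single coset $t\,\Gamma$. To control this I would invoke the slice theorem, identifying a $T^k$-invariant neighbourhood of $O$ with $T^k\times_\Gamma V$ for a linear $\Gamma$-representation $V$ and a neighbourhood of $b_0$ in $B$ with $V/\Gamma$ (whose smooth structure is that of $C^\infty(V)^\Gamma$). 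After replacing $h$ by $x\mapsto t_0^{-1}\cdot h(x)$ for a single $t_0$ in the coset over $b_0$, I may assume $h(x_0)=x_0$; then $h(s\cdot x_0)=s\cdot h(x_0)=s\cdot x_0$, so $h$ restricts to the identity on $O$. Writing $h$ in slice coordinates as $h([s,v])=[s\cdot a(v),v]$, where $a(v)\in T^k$ is the canonical lift on the dense set $\set{\Gamma_v=\set{e}}$, the identity on $O$ forces $a(0)\in\Gamma$, and smoothness of $h$ (via the principal $\Gamma$-bundle $T^k\times V\to T^k\times_\Gamma V$, on which $s\mapsto s\cdot a(v)$ acts by translation) forces $a$ to extend smoothly with $a(0)=e$.

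Finally, setting $f:=t_0\cdot a$ near $b_0$ patches with the definition on $B_0$ and gives a smooth extension of $f$ across the singular stratum. The defining identity then persists everywhere by continuity: for $x_n\to x$ with $\pi(x_n)\in B_0$ one has $h(x_n)=f(\pi(x_n))\cdot x_n\to f(\pi(x))\cdot x$ and simultaneously $h(x_n)\to h(x)$, so $h(x)=f(\pi(x))\cdot x$, the extended value automatically lying in the correct coset $t\,\Gamma_x$. I expect Step~3 — the smooth extendability of the canonical lift across singular orbits — to be the genuine technical heart; the reduction to ``$h$ is the identity on the orbit after one translation'' is what makes the natural lift continuous, and the slice theorem is what converts the abstract limit problem into a manageable linear model.
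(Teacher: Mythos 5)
The paper does not prove this statement at all---it is quoted from \cite{HS91}, Theorem 3.1---so your argument has to stand on its own. Your Steps 1 and 2 are correct: on the free locus the translating element is unique, constant along orbits, and smooth by local triviality of the principal bundle. (Your added standing hypotheses---effectiveness and connectedness---are genuinely needed for this, and are part of the setting of \cite{HS91}.) The gap is in Step 3, and it is not a technicality: the assertion that the canonically defined map $a$ on $\{v\in V:\Gamma_v=\{e\}\}$ extends smoothly over the singular set \emph{is} the content of the theorem, and the justification you offer for it is not a proof. The parenthetical mechanism (``smoothness of $h$, via the principal $\Gamma$-bundle $T^k\times V\to T^k\times_\Gamma V$, forces $a$ to extend smoothly'') fails on both counts: when $\Gamma$ has positive dimension (e.g.\ at a fixed point, where $\Gamma=T^k$ and the tube is just $V$) this map is not a covering and $h$ has no preferred lift, so there is nothing to read off; and when $\Gamma$ is finite, the covering argument shows only that a lift \emph{already known to be continuous} across the singular set is smooth---i.e.\ it reproves smoothness on the free locus, which you had, and says nothing at points with $\Gamma_v\neq\{e\}$, which is where the problem lives. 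Already for $M=\cc$ with the rotation action the step has real content: one must show that a smooth equivariant orbit-preserving diffeomorphism is $z\mapsto z\,\gamma(|z|^2)$ with $\gamma\colon[0,\infty)\to S^1$ smooth up to and including $0$. Note that $z\mapsto z e^{i|z|}$ is equivariant, orbit-preserving, and a $C^1$-diffeomorphism near $0$, but not smooth; so smoothness of $h$ must be converted into smoothness of $\gamma$ at the singular point by an actual argument, not by continuity considerations. The standard way (and essentially what Haefliger--Salem do) is representation-theoretic: in the linear slice model decompose into weight spaces and invoke the equivariant Schwarz--Mather--Po\'enaru theorem (smooth equivariant maps are generated, over smooth invariant functions, by polynomial equivariant maps), or equivalently an equivariant Taylor expansion with control of flat terms; this writes each weight component of $h$ as $h_j=z_ju_j$ with $u_j$ smooth and invariant, orbit preservation gives $|u_j|=1$, and the $u_j$ assemble into the smooth extension of $a$. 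Without this (or an equivalent) input, your Step 3 is a restatement of the theorem rather than a proof of it.

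There is also a smaller error in the same step: after normalizing $h(x_0)=x_0$ you claim the extension satisfies $a(0)=e$. That is not forced; the limiting value need only lie in $\Gamma$. On the M\"obius band $S^1\times_{\zz/2}\rr$ (with $S^1$ translating the first factor and $\zz/2$ generated by $(s,v)\mapsto(-s,-v)$), the map $[s,v]\mapsto[-s\,e^{iv^2},v]$ is smooth, equivariant, orbit-preserving, and fixes the exceptional orbit pointwise, yet its canonical $a$ satisfies $a(v)=-e^{iv^2}\to -1\neq e$. This is harmless for the theorem---any value in the correct coset is acceptable, and the resulting $f$ is still smooth in the quotient sense---but it shows that your normalization does not pin down the boundary value, and a correct extension argument must accommodate limits anywhere in $\Gamma$.
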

Here we say that $f \in C^{\infty}(B)$ if $\pi^*f$ is smooth on $M$. Similarly, we define $\Omega^{\bullet}(B)$ to correspond to the basic forms on $M$:
 \[\Omega^{\bullet}_{\rm bas}(M)=\{\xi \in \Omega^\bullet(M)\colon \iota_X \xi = \mathcal{L}_X \xi =0, \mbox{ for all infinitesimal generators } X \} \]
\begin{remark}
The discussion in \cite{HS91} concerns itself with the more general setting in which $M$ is also allowed to be an orbifold. As our case of interest is when the total space is a smooth manifold we will immediately restrict to that case.
\end{remark}

\begin{definition}
Let $M,M'$ be two manifolds endowed with $T^k$-actions with common orbit space $B$ and let $\pi : M \rightarrow B,\pi' : M' \rightarrow B$ denote the quotient maps. Then $M$ and $M'$ are said to be \textbf{locally equivalent} if each point in $B$ has a neighbourhood $U$ such that there exists a $T^k$-equivariant diffeomorphism from $\pi^{-1}(U)$ to $\pi'^{-1}(U)$, covering the identity on $B$.
\end{definition}


When fixing an effective $T^k$-action on a manifold $M$ it is possible to classify all other such actions which are locally equivalent to $M$:
\begin{proposition}[Proposition 4.2 in \cite{HS91}]\label{prop:classification}
Let $\pi : M \rightarrow B$ be a manifold endowed with an effective torus action. The equivalence classes of manifolds endowed with torus actions locally equivalent to that on $M$ form a torsor for $\check{H}^2(B;\mathbb{Z}^k)$.\end{proposition}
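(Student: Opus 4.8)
The plan is to recognize the classification as a Čech-cohomological descent problem for the sheaf of automorphisms of the fixed model, and then to evaluate that sheaf cohomology via an exponential short exact sequence. First I would set up the descent. Let $\mathcal{G}$ denote the sheaf on $B$ whose sections over an open set $U$ are the $T^k$-equivariant, orbit-preserving diffeomorphisms of $\pi^{-1}(U)$ covering $\id_U$, with composition as group operation. If $\pi' : M' \to B$ is locally equivalent to $M$, choose an open cover $\{U_i\}$ of $B$ together with equivariant diffeomorphisms $\phi_i : \pi'^{-1}(U_i) \to \pi^{-1}(U_i)$ covering $\id_{U_i}$. On overlaps the maps $\psi_{ij} := \phi_i \comp \phi_j^{-1}$ lie in $\mathcal{G}(U_{ij})$ and satisfy the cocycle identity $\psi_{ij}\psi_{jk} = \psi_{ik}$; changing the $\phi_i$ alters $(\psi_{ij})$ by a coboundary, while conversely any $\mathcal{G}$-valued $1$-cocycle glues back to a locally equivalent $M'$. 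This standard clutching construction identifies the set of equivalence classes of locally equivalent actions with $\check{H}^1(B;\mathcal{G})$, the fixed model $M$ corresponding to the trivial cocycle.

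The crucial step is to identify $\mathcal{G}$. By Theorem \ref{th:HS1} every section of $\mathcal{G}$ over $U$ has the form $h(x) = f(\pi(x))\cdot x$ for some $f \in C^\infty(U;T^k)$, and since $T^k$ is abelian, composition corresponds to pointwise multiplication of the $f$'s, so $\mathcal{G}$ is a sheaf of abelian groups. The point I would stress is that $f$ is \emph{unique}: because the action is effective, its principal isotropy is trivial (an element fixing the open dense principal stratum fixes a closed submanifold of full dimension, hence all of $M$, and effectiveness forces it to be the identity), so $f$ is determined on the principal stratum and, by density and continuity, on all of $U$. Hence $f \mapsto h_f$ is an isomorphism $\mathcal{G} \cong \underline{C^\infty(T^k)}$ onto the sheaf of smooth $T^k$-valued functions on $B$; here connectedness of the isotropy groups is what guarantees that we are in the standard setting in which Theorem \ref{th:HS1} applies and the isotropy groups are subtori.

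It then remains to compute $\check{H}^1(B;\underline{C^\infty(T^k)})$, and here I would invoke the exponential sequence of sheaves on $B$,
\begin{equation*}
0 \to \underline{\mathbb{Z}^k} \to \underline{C^\infty(\mathfrak{t}^k)} \overset{\exp}{\longrightarrow} \underline{C^\infty(T^k)} \to 0,
\end{equation*}
where $\mathfrak{t}^k \cong \rr^k$ is the Lie algebra and $\underline{\mathbb{Z}^k}$ the integral lattice viewed as a constant sheaf. Exactness is the usual local lifting of $T^k$-valued functions through $\exp$ over contractible opens, compatible with the smooth structure on $B$ since $\exp$ is a local diffeomorphism. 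The middle term is a sheaf of modules over the smooth functions on $B$, and $B$ admits smooth partitions of unity — obtained by averaging an invariant partition of unity on $M$ over $T^k$ and descending it — so $\underline{C^\infty(\mathfrak{t}^k)}$ is fine, hence acyclic in positive degrees. The long exact sequence in sheaf cohomology (which agrees with Čech cohomology on the paracompact base $B$) then collapses to the isomorphism $\check{H}^1(B;\underline{C^\infty(T^k)}) \cong \check{H}^2(B;\mathbb{Z}^k)$.

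Finally, since $\mathcal{G}$ is abelian, $\check{H}^1(B;\mathcal{G})$ is an abelian group with zero the class of $M$, and transporting this structure through the bijection of the first paragraph exhibits the set of equivalence classes as an affine space — indeed a torsor — modelled on $\check{H}^2(B;\mathbb{Z}^k)$, canonically pointed by the fixed model. I expect the main obstacle to be the identification of $\mathcal{G}$: one must argue carefully that the isotropy ambiguity in Theorem \ref{th:HS1} genuinely disappears (the uniqueness of $f$), and one must establish acyclicity of the middle sheaf, that is, the existence of smooth partitions of unity on the singular quotient $B$.
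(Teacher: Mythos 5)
Your proposal is correct and follows essentially the same route as the paper's proof: local equivariant trivializations whose transition maps, via Theorem \ref{th:HS1}, give a \v{C}ech $1$-cocycle valued in the sheaf $\mathcal{T}^k$ of smooth $T^k$-valued functions on $B$, followed by the exponential sequence to land in $\check{H}^2(B;\mathbb{Z}^k)$, with the converse handled by gluing. The only difference is that you make explicit several points the paper's sketch leaves implicit (uniqueness of $f$ from effectiveness, abelianness of the automorphism sheaf, and fineness of $\underline{C^\infty(\mathfrak{t}^k)}$ via invariant partitions of unity), which is a welcome sharpening rather than a deviation.
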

Since we will need to get our hands on the cohomology classes prescribed by the proposition, we give a sketch of the proof. Notice that there is no preferred manifold endowed with a torus action locally equivalent to that of $M$ and the class in question measures how different from $M$ another such manifold is.
\begin{proof} 
Let $\pi' : M' \rightarrow B$ be a manifold endowed with a locally equivalent torus action. Therefore, there exists a cover $\mathcal{U} = \set{U_i}_{i\in I}$ of $B$ together with equivariant diffeomorphisms $h_i : \pi^{-1}(U_i) \rightarrow \pi'^{-1}(U_i)$. The transition maps $h_j^{-1} \comp h_i : \pi^{-1}(U_i \cap U_j) \rightarrow \pi^{-1}(U_i \cap U_j)$ satisfy the assumptions of Theorem \ref{th:HS1} and therefore there exists $f_{ji} \in C^{\infty}(U_i \cap U_j;T^k)$ such that $h_j^{-1}\comp h_i(x) = f_{ji}(\pi(x))\cdot x$. These functions combine to give a cocycle $\set{f_{ji}} \in \check{C}^1(\mathcal{U};\mathcal{T}^k)$, where $\mathcal{T}^k$ is the sheaf of smooth functions with values in $T^k$. Using the exponential sequence one obtains the desired class in $\check{H}^2(\mathcal{U};\mathbb{Z}^k)$.

Conversely, given a class in $\check{H}^2(\mathcal{U};\mathbb{Z}^k)$, one uses again the exponential sequence to obtain a cocyle $\set{f_{ji}} \in \check{C}^1(\mathcal{U};\mathcal{T}^k)$. Using this cocycle one may glue the disjoint unions of the $\pi^{-1}(U_i)$ to obtain a manifold $M'$ endowed with a locally equivalent torus action. One readily checks that cohomologous cocycles result in equivalent $T^k$ manifolds.
\end{proof}
\begin{definition}
Let $\pi : M \rightarrow B$ be a manifold endowed with an effective torus action with connected isotropy groups, and $\pi' : M' \rightarrow B$ be a locally equivalent action. We will refer to the corresponding class in $\check{H}^2(B,\mathbb{Z}^k)$ as the \textbf{Haefliger-Salem class} of $M'$ with respect to $M$.
\end{definition}

\begin{remark}
A key difference with the classification of principal torus bundles in terms of their Chern class, is that the classification in Proposition \ref{prop:classification} is affine in nature. Given a base $B$, there is no canonical torus actions playing the role $B \times T^k$ has in the principal case. 
\end{remark}

\begin{remark}
Note that the results in \cite{HS91} are about orbifolds, but as we assume that $M$ is a smooth manifold, any locally equivalent spaces will be smooth manifolds as well.
\end{remark}


\subsection{Torus actions, divisors and connections}
We will lift torus actions to the elliptic tangent bundle in such a way that the associated infinitesimal action becomes free. The required notion we need is the following:

\begin{definition}
A $T^k$ action and an elliptic divisor, $I_{\abs{D}}$ on a manifold $M$ are \textbf{compatible} if
\begin{itemize}
\item the action is effective,
\item the isotropy group of each point is connected,
\item the points with non-trivial isotropy are contained in $D$, 
\item the action preserves the ideal $I_{\abs{D}}$.\qedhere
\end{itemize}
\end{definition}
These assumptions allow us to lift to the elliptic tangent bundle:

\begin{proposition}\label{prop:inflift}
Let $I_{\abs{D}}$ be an elliptic divisor compatible with a $T^k$-action on $M$. Then there exists an injective Lie algebroid homomorphism $\tilde{a} : \mathfrak{t}^k \rightarrow \elli$ such that the following diagram commutes:
\begin{center}
\begin{tikzcd}
 & \elli \ar[d] \\
\mathfrak{t}^k \ar[ru,"\tilde{a}"] \ar[r,"a"] & TM
\end{tikzcd}
\end{center}
where $a\colon \mathfrak{t}^k \to TM$ is the infinitesimal $T^k$-action on $M$.
\end{proposition}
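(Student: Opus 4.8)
**The plan is to lift the infinitesimal action vector-field by vector-field, checking locally that each fundamental vector field of the torus action is a section of $\elli$.**

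The starting observation is that compatibility already does almost all the work: since the $T^k$-action preserves the ideal $I_{\abs{D}}$, every fundamental vector field $a(\xi)$ (for $\xi \in \mathfrak{t}^k$) is, by definition of $\elli$ as the sheaf of vector fields preserving $I_{\abs{D}}$, a section of $\elli$. This gives a map $\tilde a \colon \mathfrak{t}^k \to \Gamma(\elli)$ landing in the subalgebroid, and commutativity of the triangle is immediate because the anchor $\rho\colon \elli \to TM$ is precisely the inclusion of $\elli$-sections as honest vector fields. So $\rho \comp \tilde a = a$ holds tautologically once one knows the image of $a$ lands in $\elli$.

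The first real step is therefore to verify that $a(\xi) \in \Gamma(\elli)$ for each $\xi$. I would argue this pointwise using the local normal forms: near a point $p \in D[k]$ the compatibility hypotheses (effective action, connected isotropy, all non-free points inside $D$) force the isotropy subalgebra to act by the standard rotations $\partial_{\theta_1},\dots,\partial_{\theta_j}$ in the coordinates of Remark \ref{rem:locform}, while the remaining generators of $\mathfrak{t}^k$ act by the constant fields $\partial_{x_\ell}$ in the free directions. Both $\partial_{\theta_i}$ and $\partial_{x_\ell}$ appear in the local frame $\Gamma(\elli) = \inp{r_1\partial_{r_1},\partial_{\theta_1},\ldots,\partial_{x_{2k+1}},\ldots,\partial_{x_n}}$, so the fundamental vector fields are genuinely $\elli$-sections and the factorization through $\tilde a$ is smooth across $D$ (where the bare vector field $\partial_\theta = x_{2i-1}\partial_{x_{2i}} - x_{2i}\partial_{x_{2i-1}}$ vanishes, but its $\elli$-lift does not).

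The second step is to check that $\tilde a$ is a Lie algebroid homomorphism, i.e. a Lie algebra map $\mathfrak{t}^k \to \Gamma(\elli)$ compatible with the bracket. Since $\mathfrak{t}^k$ is abelian and the fundamental vector fields of a group action commute, $[a(\xi),a(\eta)]=0$ in $\mathfrak{X}(M)$; as the anchor of $\elli$ is injective on the dense free locus $M \setminus D$, the bracket $[\tilde a(\xi),\tilde a(\eta)]_{\elli}$ must also vanish there, hence everywhere by continuity. Finally, injectivity of $\tilde a$ follows from injectivity of $a$ (guaranteed by effectiveness) together with the fact that $\rho \comp \tilde a = a$. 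The main obstacle is genuinely the first step — verifying that the \emph{constant-direction} generators of $\mathfrak{t}^k$, which act freely, still lift smoothly and that the identification of the isotropy action with standard rotations is forced by the compatibility axioms rather than merely assumed; once the local model is pinned down, injectivity, the triangle, and the bracket condition are all formal.
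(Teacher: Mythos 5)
Your overall strategy (lift the fundamental vector fields, then check locally that they give non-vanishing sections of $\elli$) is the same as the paper's, but the proposal leaves the genuinely hard step unproved. The crux is your claim that near a point of $D$ the compatibility hypotheses ``force the isotropy subalgebra to act by the standard rotations'' in the coordinates of Remark~\ref{rem:locform}. Compatibility only says that the action preserves the ideal $I_{\abs{D}}$; it does \emph{not} say that coordinates putting $I_{\abs{D}}$ in its normal form can be chosen to simultaneously linearize the action. Closing this gap is what the paper's proof actually consists of: it linearizes the isotropy action of $T^l$ at the point, so that it becomes linear with an integral weight matrix $A$ (connectedness of the isotropy groups is used here to guarantee that $A$ has an $l\times l$ submatrix of determinant $\pm 1$); it then takes a local generator $f$ of $I_{\abs{D}}$, arranges it to be non-negative, averages it over $T^l$ to make it invariant, and invokes Schwarz's theorem on smooth invariant functions to write $f=g(r_1^2,\dots,r_m^2,x_1,\dots,x_{n-2m})$ in the \emph{linearizing} coordinates. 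Only after this does it follow that the rotational fields $\partial_{\vartheta_j}$ preserve $\inp{f}$, hence define non-vanishing sections of $\elli$, and that the fundamental vector fields $A(\partial_{\theta_1}),\dots,A(\partial_{\theta_l})$ span a rank-$l$ subspace of $\elli$ along $D$. You flag this step yourself as ``the main obstacle'' but give no argument for it, so the proposal asserts the key point rather than proving it.

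The second problem is your injectivity argument, which is incorrect as stated. You deduce injectivity of $\tilde a$ from ``injectivity of $a$ (guaranteed by effectiveness)'' together with $\rho\comp\tilde a=a$. But $a$ is \emph{not} injective as a bundle map $M\times\mathfrak{t}^k\to TM$: at any point with isotropy $T^l$ the fundamental vector fields of the isotropy directions vanish in $TM$, and effectiveness only makes $a$ injective as a linear map $\mathfrak{t}^k\to\mathfrak{X}(M)$ into global vector fields. The injectivity asserted in the proposition is fibrewise injectivity of $\tilde a:M\times\mathfrak{t}^k\to\elli$ --- this is precisely what makes the lifted infinitesimal action free, and what is needed later for the Atiyah-type sequence $0\to B\times\mathfrak{t}^k\to\elli/T^k\to\A_{\partial B}\to 0$ --- and it cannot be inherited through $\rho\comp\tilde a = a$ from a map that fails to be fibrewise injective exactly at the points at issue. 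It has to come out of the local analysis above: over $D$ the lifts of the isotropy generators are non-vanishing sections lying in the kernel of the anchor and are linearly independent there (this uses the determinant condition on $A$), while the remaining $k-l$ generators have linearly independent images under the anchor; together this gives rank $k$ at every point.
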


\begin{proof}
The existence of the Lie algebroid map is a local statement, so we can work in neighbourhoods of points.

Given a point $p$ where action is free, there are two possibilities, either $p\not\in D$ or $p\in D$. In the first case, $\A$ isomorphic to $TM$ via the anchor map in a neighbourhood of $p$ and the action has a unique lift to $\elli$. If $p \in D$, because the action preserves the ideal $\elli$, then the orbit of $p$ lies within the same stratum as $p$, say $D[i]$ and because the action is free, we get a rank $k$ subbundle $a(\mathfrak{t}^k) \subset TD[i] \hookrightarrow \A_{|D|}$, and therefore $a$ has a lift, which is unique by continuity.

Given a point where the action has nontrivial isotropy, say $T^l\subset T^k$, we can split $T^k = T^l \times T^{k-l}$. By the previous argument the action of $T^{k-l}$ lifts to $\A_{|D|}$ and the lift is made of a subspace that maps injectively to $TM$ via the anchor. Now we focus on the action of $T^l$. Since $p$ is fixed by the action of $T^l$ and the action is effective we can linearise it and assume that, in appropriate coordinates (with values in $\cc^m \times \rr^{n-2m}$), the action is given by
\[(\theta_1,\dots, \theta_l)\cdot (z_1,\dots, z_m,x_1,\dots, x_{n-2m}) \mapsto (e^{i(\sum_j a_{1,j}\theta_j)}z_1,\dots, e^{i(\sum_j a_{m,j}\theta_j)}z_m,x_1,\dots,x_{n-2m}),  \]
where $m \geq l$ and the matrix $A = (a_{i,j})_{ij}$ has integral coefficients. Because all isotropy groups are connected $A$ has an $l\times l$ submatrix with determinant $\pm 1$.

Suppose $f$ is a local generator of the elliptic divisor $I_{\abs{D}}$. Since $D$ has codimension 2, the locus where $f$ is nonzero is connected and hence we may take $f$ to be non-negative, vanishing only at $D$. Since $f$ is non-negative, by taking its average over the $T^l$ action, we obtain another nonnegative generator of the ideal, still denoted by $f$, which is invariant under the $T^l$ action. The invariant polynomials of the $T^l$-action are generated by $(r_1^2,\ldots,r_m^2,x_1,\ldots,x_{n-2m})$, and hence a result by Schwarz \cite{MR370643} implies that $f$ must be of the form $f = g(r_1^2,\ldots,r_m^2,x_1,\ldots,x_{n-2m})$, for some smooth $g$. Because $f$ has this form, it is immediate that the vector fields $\partial_{\vartheta_1},\ldots,\partial_{\vartheta_m}$  preserve the ideal generated by $f$, where we use polar coordinates  for $z_i$: $z_i = r_i e^{i\vartheta_i}$. Hence $\partial_{\vartheta_i}$ give rise to (non-vanishing) elements of $\elli$. Because the vector fields generating the $T^k$-action are given by $A(\partial_{\theta_1}),\ldots,A(\partial_{\theta_l}) \in \mbox{span}\langle\partial_{\vartheta_1},\ldots,\partial_{\vartheta_m}\rangle$, and $A$ has maximal rank, we see that these vector fields span a rank $l$-subspace of $\elli$ which over $D$ are in the kernel of the anchor map and hence are independent from the generators corresponding to the $T^{k-l}$ action. 
\end{proof}

Studying general torus actions goes beyond the scope of this article. For instance, one problem which occurs is that there is too much freedom in choosing a compatible elliptic divisor:
\begin{example}
Consider $\cc^2$, endowed with the $S^1$-action $\theta_1\cdot(z_1,z_2) = (e^{i\theta_1}z_1,z_2)$ and the compatible elliptic divisor generated by $\abs{z_1}^2\abs{z_2}^2$. Using coordinates $(t,z_2)$ for the quotient space $B = \rr_{\geq 0} \times \cc$, we have that $B$ inherits a quotient divisor $I_{\red} = \inp{t\abs{z_2}^2}$. Which is a mixture between an elliptic and real log divisor. More generally, for a $T^k$-action on $\cc^n$ the induced divisor on the base can have any kind of mixture between elliptic and real log divisors.  
\end{example}

We restrict ourselves to a very well-behaved class of singular torus actions, but still interesting enough to allow for many examples:
\begin{definition}\label{def:standard action}
We say that a $T^k$-action on a manifold $M^{n}$ is \textbf{standard} if each orbit has a neighbourhood $U$ such that
\begin{equation*}
U \simeq  T^{k-l} \times \cc^l \times \rr^m \qquad\mbox{ for some } k,l,m \mbox{ such that } k+l+m=n,
\end{equation*}
in which the action is given by
\begin{align*}
(e^{i\theta_1},\ldots,e^{i\theta_k})&\cdot (\varphi_1,\ldots,\varphi_{k-l}, z_1,\ldots,z_l,x_1,\ldots, x_m) \\&= (e^{i\theta_1}\varphi_1,\ldots,e^{i\theta_{k-l}}\varphi_{k-l},e^{i\theta_{k-l+1}}z_1,\ldots,e^{i\theta_k}z_l,x_1,\ldots, x_m),
\end{align*}
possibly after changing the generators of $T^k$.
\end{definition}
For general $T^k$-actions, deriving conditions when they are standard is not simple. However, for full torus actions one can show the following:
\begin{lemma}[\cite{CKW20B}, Proposition 6.1]
A $T^n$-action on a manifold $M^{2n}$ which is effective and has connected isotropy groups is standard.
\end{lemma}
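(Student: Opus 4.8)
The plan is to prove this as a purely local statement around each orbit, using the differentiable slice theorem together with the representation theory of tori. First I would average an arbitrary Riemannian metric over $T^n$ so that the action becomes isometric; the slice theorem then identifies a $T^n$-invariant neighbourhood of an orbit $\mathcal{O}=T^n\cdot p$ with the associated bundle $T^n\times_H V$, where $H=T^n_p$ is the isotropy of $p$ and $V=T_pM/T_p\mathcal{O}$ is the slice representation. Since $H$ is closed and connected by hypothesis it is a subtorus, say $H\cong T^l$; the orbit $T^n/H\cong T^{n-l}$ has dimension $n-l$, and $V$ has real dimension $2n-(n-l)=n+l$.

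Next I would decompose $V$ as a real $T^l$-representation into two-dimensional weight spaces and a trivial summand,
\begin{equation*}
V\cong \cc_{\chi_1}\oplus\cdots\oplus\cc_{\chi_s}\oplus\rr^{t},\qquad \chi_i\in\widehat{T^l}\cong\zz^l,\quad 2s+t=n+l,
\end{equation*}
where $T^l$ acts on $\cc_{\chi_i}$ by $e^{i\theta}\cdot z=\chi_i(e^{i\theta})z$ and trivially on $\rr^t$. The stabiliser in $H$ of a slice vector with non-vanishing weight coordinates exactly in $S\subseteq\{1,\dots,s\}$ is $\bigcap_{i\in S}\ker\chi_i$, and every isotropy group occurring near $\mathcal{O}$ is of this form. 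The hypotheses now translate into representation-theoretic conditions: \emph{effectiveness} forces $\bigcap_{i=1}^{s}\ker\chi_i$ to be finite, i.e. the $\chi_i$ span $\zz^l$ over $\mathbb{Q}$; and \emph{connectedness of all isotropy groups} forces each $\bigcap_{i\in S}\ker\chi_i$ to be connected, equivalently that every sub-collection of the $\chi_i$ generates a saturated (direct-summand) subgroup of $\zz^l$, so in particular each $\chi_i$ is primitive.

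The decisive step is to upgrade these conditions to the statement that the slice representation is the standard one: that there are exactly $s=l$ weight directions, that $\{\chi_1,\dots,\chi_l\}$ is an actual $\zz$-basis of $\widehat{T^l}$, and hence $t=n-l$ and $V\cong\cc^l\times\rr^{n-l}$ with $H$ rotating the $i$-th complex coordinate with weight one. Once this is established I would complete a basis of $H\cong T^l$ to generators of $T^n$ so that the complementary $T^{n-l}$ sweeps out the orbit directions; in these coordinates the model $T^n\times_H V$ becomes exactly $T^{n-l}\times\cc^l\times\rr^{n-l}$ with the action displayed in the definition of \emph{standard}, which finishes the proof.

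I expect this decisive step to be the main obstacle, and it is where the real content lies. Connectedness of the isotropy groups alone delivers primitivity of each weight and saturation of every sub-collection, and effectiveness delivers spanning; but these do not by themselves pin down the \emph{number} of weights, so one must genuinely exploit that the torus has full (half) dimension. The careful point is to use the connectedness of the stabilisers of the intermediate strata, together with the dimension constraint $2s+t=n+l$ and the fact that the generic orbit is $n$-dimensional, to force $s=l$ and to fix the trivial summand at dimension $n-l$; controlling this interplay—rather than the slice theorem or the final change of generators—is the delicate part of the argument.
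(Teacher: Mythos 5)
The paper states this lemma with no proof at all (it is introduced by ``one can show the following''), so there is nothing on the paper's side to compare your argument against; your slice-theorem route is certainly the natural one, and everything up to your ``decisive step'' is correct: the identification of an invariant neighbourhood with $T^n\times_H V$, the weight decomposition of $V$, the fact that all nearby isotropy groups are of the form $\bigcap_{i\in S}\ker\chi_i$, that effectiveness forces the weights to span rationally (so $s\ge l$), and that connectedness of isotropy groups forces every sub-collection of weights to generate a saturated subgroup, in particular each $\chi_i$ to be primitive. The genuine gap is exactly where you predict it, but it is worse than you suggest: the decisive step cannot be completed from the ingredients you list, because none of them sees the \emph{multiplicity} of a weight. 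Concretely, take $n=3$ and
\begin{equation*}
M^6 = T^2\times\cc^2,\qquad (b_1,b_2,b_3)\cdot(\varphi_1,\varphi_2,w_1,w_2) = (b_1\varphi_1,\,b_2\varphi_2,\,b_3w_1,\,b_3w_2),
\end{equation*}
or its compact version $T^2\times S^4$ with $S^4\subset\cc^2\times\rr$ and the suspended diagonal action. This action is effective, every isotropy group is either trivial or the connected circle $\set{1}\times\set{1}\times S^1$, and generic orbits are $3$-dimensional. At the orbit $T^2\times\set{0}$ one has $l=1$, and the slice representation is $\cc_{\chi_1}\oplus\cc_{\chi_2}$ with $\chi_1=\chi_2=1\in\zz$, so $s=2$, $t=0$: the dimension constraint $2s+t=n+l=4$ holds, each weight is primitive, and every sub-collection of $\set{\chi_1,\chi_2}$ generates the saturated subgroup $\zz\subset\zz$. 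All of your representation-theoretic conditions are satisfied, yet $s\neq l$.

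Moreover this action is genuinely not standard, so the gap is not one of exposition: the standard model at that orbit would be $T^2\times\cc\times\rr^2$, in which the fixed-point set of the isotropy circle is $T^2\times\set{0}\times\rr^2$, of dimension $4$, whereas in the example the fixed-point set of $\set{1}\times\set{1}\times S^1$ is $T^2\times\set{0}$, of dimension $2$; equivariant diffeomorphisms preserve fixed-point sets of subgroups, and precomposing the action with an automorphism of $T^3$ (``changing generators'') does not change these sets, so no such identification can exist. Consequently no argument using only connectedness/saturation, the dimension count $2s+t=n+l$, and $n$-dimensionality of generic orbits can force $s=l$. What your outline does prove is the case $n\le 2$, where the dimension count alone pins down $s$ (for $l=1$ the slice is $3$-dimensional so $s\le 1$, and at a fixed point faithfulness forces $s=2=l$); this is precisely the setting of the Orlik--Raymond classification the paper invokes later. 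For $n\ge 3$ the lemma requires a hypothesis beyond effectiveness and connected isotropy --- e.g.\ that every slice representation is multiplicity-free, equivalently that $\dim M^{H_p}=2n-2\dim H_p$ near each point --- so the obstacle you ran into reflects a defect of the statement itself rather than a missing trick in your argument.
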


The local normal form of a standard $T^k$-action immediately implies the following:

\begin{proposition}\label{prop:quotientmap}
Let $\pi: M \rightarrow B:= M/T^k$ be a manifold endowed with a standard $T^k$-action. Then
\begin{itemize}
\item $B$ is a manifold with corners.
\item The action is compatible with the elliptic divisor $I_{\abs{D}} : = \pi^*I_{\partial B}$.
\item $\pi$ induces a Lie algebroid submersion $\elli \rightarrow \A_{\partial B}$.
\end{itemize}
\end{proposition}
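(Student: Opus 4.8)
The plan is to verify all three assertions in the local normal form of a standard action, since each one is local in nature: being a manifold with corners is a local condition, compatibility of the action with $I_{\abs{D}}$ is tested orbit‑by‑orbit, and a Lie algebroid submersion is a fibrewise‑surjective morphism whose defining properties can be checked on the generators of Remark \ref{rem:locform} in a single chart. So I would fix an orbit and work in a model neighbourhood $U \simeq T^{k-l}\times \cc^l \times \rr^m$ carrying the standard action, writing $z_j = r_j e^{i\vartheta_j}$.

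The first step, and the one genuinely non‑formal point, is to pin down the smooth structure on $B$. A function on $U$ descends to $B$ exactly when it is $T^k$‑invariant; since the $T^k$‑invariants of $T^{k-l}\times \cc^l\times \rr^m$ are generated by $(r_1^2,\dots,r_l^2,x_1,\dots,x_m)$ (the $\varphi_i$‑directions are translated away), Schwarz's theorem \cite{MR370643} shows every basic function is a smooth function of $t_j := r_j^2$ and the $x_a$. Hence $\pi(U)$ is identified with $\rr_{\geq 0}^l \times \rr^m$ with coordinates $(t_1,\dots,t_l,x_1,\dots,x_m)$, the standard local model of a manifold with corners, whose boundary is cut out by $t_1\cdots t_l = 0$; transition functions are smooth because the smooth structure is intrinsically the sheaf of basic functions. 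The subtlety to emphasise is that it is $t_j=r_j^2$, \emph{not} $r_j$, that serves as the smooth boundary‑defining function, as forced by invariant theory — this is precisely what makes the elliptic directions upstairs, rather than log directions, the correct match.

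For the second bullet, the boundary ideal is $I_{\partial B}|_{\pi(U)} = \inp{t_1\cdots t_l}$, so its pullback is $\pi^* I_{\partial B}|_U = \inp{r_1^2\cdots r_l^2}$, which is exactly the local normal form of an elliptic divisor from Definition \ref{def:divisors}, with vanishing locus $D$ the set where some $r_j=0$. The four compatibility conditions are then all visible in the model: the action is effective (each circle factor moves a coordinate, so the global kernel is trivial), the isotropy of a point is the subtorus generated by the circles rotating the vanishing $z_j$ and hence connected, the points of non‑trivial isotropy are exactly those with some $r_j=0$, i.e.\ $D$, and the action rotates the $z_j$ and so preserves each $r_j$ and therefore the generator $r_1^2\cdots r_l^2$.

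For the third bullet I would define the morphism $\elli \to \A_{\partial B}$ covering $\pi$ on the generators of Remark \ref{rem:locform}. Using $t_j = r_j^2$ one computes $r_j\partial_{r_j} = 2 t_j\partial_{t_j}$, so the map is forced to be
\[
\partial_{\varphi_i} \mapsto 0, \quad r_j\partial_{r_j} \mapsto 2 t_j \partial_{t_j}, \quad \partial_{\vartheta_j}\mapsto 0, \quad \partial_{x_a}\mapsto \partial_{x_a},
\]
which lands in the generators $\inp{t_j\partial_{t_j},\partial_{x_a}}$ of the log‑tangent bundle $\A_{\partial B}$; the crucial point is that the elliptic generators $r_j\partial_{r_j}$ map to the log generators $t_j\partial_{t_j}$. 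Compatibility with anchors is the identity $\pi_*(r_j\partial_{r_j}) = 2t_j\partial_{t_j}$ together with $\pi_*\partial_{x_a}=\partial_{x_a}$ and $\pi_*\partial_{\vartheta_j}=\pi_*\partial_{\varphi_i}=0$, and compatibility with brackets is immediate since all brackets among these generators vanish on both sides. Finally the morphism is fibrewise surjective, as its image is spanned by $\set{t_j\partial_{t_j}}\cup\set{\partial_{x_a}}$, which is all of $\A_{\partial B}$; hence it is a Lie algebroid submersion. The only genuinely non‑formal input is the smooth‑structure determination via Schwarz's theorem in the first step, and everything else is a direct reading of the normal form.
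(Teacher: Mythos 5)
Your proposal is correct and follows exactly the route the paper intends: the paper states that the local normal form of a standard $T^k$-action ``immediately implies'' this proposition and gives no further proof, and your argument is precisely the chart-by-chart verification of that claim, including the identification of basic functions via Schwarz's theorem \cite{MR370643} (the same invariant-theory input the paper itself uses when lifting the action to $\elli$). The key observations you isolate --- that $t_j = r_j^2$ rather than $r_j$ is the boundary-defining function downstairs, and that $r_j\partial_{r_j} \mapsto 2t_j\partial_{t_j}$ matches elliptic generators to log generators --- are exactly what makes the statement ``immediate'' in the authors' sense.
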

In particular there is a canonical compatible elliptic divisor associated to a standard $T^k$-action. Therefore, in what follows when given a standard $T^k$-action on $M$, $I_{\abs{D}}$ will always denote the canonical associated elliptic divisor.

\begin{example}
Consider $S^n \subset \rr^{n+1}$, and the circle action given by rotating the first two coordinates.
This action is standard with quotient space diffeomorphic to $D^{n-1}$ and elliptic divisor generated by $x_1^2+x_2^2$.
\end{example}

Thanks to Proposition \ref{prop:inflift} and \ref{prop:quotientmap} a standard $T^k$-action lifts to the associated Lie algebroid and so does the quotient map. Therefore, we obtain a diagram as follows:
\begin{center}
\begin{tikzcd}
\elli \ar[d] \ar[r,"\tilde{\pi}"] & \A_{\partial_B} \ar[d] \\
M \ar[r,"\pi"] & B
\end{tikzcd}
\end{center} 
Hence, we obtain a short exact sequence of vector bundles over $M$:
\begin{equation*}
0 \rightarrow \ker d\tilde{\pi} \rightarrow \elli \rightarrow \pi^*\A_{\partial_B} \rightarrow 0
\end{equation*}
We may quotient by $T^k$, to obtain the sequence of vector bundles over $B$:
\begin{equation*}
0 \rightarrow B \times \mathfrak{t}^k \rightarrow \elli/T^k \rightarrow \A_{\partial_B} \rightarrow 0
\end{equation*}
One may think of this sequence as an $\elli$ version of the Atiyah sequence for principal bundles. With this in mind, vector bundle splittings of this sequence give rise to the notion of connections for the $T^k$-action on $M$. Concretely, in terms of differential forms we have:
\begin{definition}
Let $M$ be a manifold endowed with a $T^k$ action, $m\colon T^k\times M \to M$, and let $I_{\abs{D}}$ be a compatible elliptic divisor. A \textbf{connection-one form} on $M$ is a Lie algebroid one-form $\Theta \in \Omega^1(\elli;\mathfrak{t}^k)$ such that
\begin{itemize}
\item $m_{\lambda}^*\Theta = \Theta$ for all $\lambda \in T^k$ (where $m_\lambda = m(\lambda,\cdot)$)
\item $\iota_{a(v)}\Theta = v$, for all $v \in \mathfrak{t}^k$. \hfill \qedhere
\end{itemize}
\end{definition}

\subsubsection{Complex line bundles}

Let $L \rightarrow M$ be a complex line bundle. Then $\Gamma(L^{1,0*}) \subset C^{\infty}(L;\cc)$ naturally defines a smooth complex log divisor. The associated elliptic divisor $I_{\abs{D}}$ is naturally compatible with the standard $S^1$-action on $L$.

Connections for the $S^1$-action arise from linear connections on $L$:
\begin{lemma}\label{lem:linebundleconn}
Let $\nabla$ be a complex linear connection on a complex line bundle $L \rightarrow M$, and let $\zeta \in \Omega^1(L\backslash M;\cc)$ denote the corresponding connection one-form on the associated $\cc^*$-bundle. Then $\zeta$ extends over the zero-section as a complex logarithmic form, and $\Theta := \Im(\zeta) \in \Omega^2(\elli)$ defines a connection one-form for the associated $S^1$-action
\end{lemma}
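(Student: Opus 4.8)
The plan is to compute $\zeta$ in a local trivialisation, recognise its singular part as the canonical generator of the complex log cotangent bundle, and then deduce every claim from this. First I would fix a nonvanishing local frame $s$ of $L$ over an open set $U\subseteq M$ and let $w$ be the induced fibrewise coordinate, so that on $\pi^{-1}(U)\subseteq L$ the zero section is $D=\set{w=0}$ and $I_D|_{\pi^{-1}(U)} = \inp{w}$. Writing $\nabla s = \alpha\otimes s$ with $\alpha\in\Omega^1(U;\cc)$, the principal connection one-form of $\nabla$ on the $\cc^*$-bundle $L\setminus M$ is
\begin{equation*}
\zeta = \frac{dw}{w} + \pi^*\alpha.
\end{equation*}
One checks directly that this reproduces the fundamental vector fields of the $\cc^*$-action and is $\cc^*$-invariant; and under a change of frame $s'=g\comp s$ the identities $w' = (g\comp\pi)^{-1}w$ and $\alpha' = \alpha + g^{-1}dg$ make the two $\frac{dg}{g}$-terms cancel, so $\zeta$ is globally well defined.

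The key observation is that $\frac{dw}{w}$ is exactly the standard generator of $\Omega^1(\A_D)$ paired with $w\partial_w$, i.e.\ a complex logarithmic one-form that is smooth across $D$ in the sense of $\A_D$. Since $\pi^*\alpha$ is smooth, $\zeta = \frac{dw}{w}+\pi^*\alpha$ extends over the zero section as an element of $\Omega^1(\A_D)$. Writing $w = re^{i\vartheta}$ gives $\frac{dw}{w} = \frac{dr}{r} + i\,d\vartheta$, so applying the imaginary-part map $\Im^*\colon \Omega^\bullet(\A_D)\to\Omega^\bullet(\elli)$ produces
\begin{equation*}
\Theta = \Im(\zeta) = d\vartheta + \Im(\pi^*\alpha)\in\Omega^1(\elli),
\end{equation*}
a genuine elliptic one-form because $d\vartheta$ is dual to $\partial_\vartheta\in\Gamma(\elli)$ and $\Im(\pi^*\alpha)$ is smooth.

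It then remains to verify the two defining properties of a connection one-form. Invariance $m_\lambda^*\Theta = \Theta$ for $\lambda\in S^1$ is inherited from the $\cc^*$-invariance of $\zeta$, and is also visible directly since $d\vartheta$ and $\pi^*\alpha$ are rotation invariant. For the reproduction property, the lift $a(v)=\partial_\vartheta$ of the standard generator $v\in\mathfrak{t}^1$ of the fibre rotation satisfies
\begin{equation*}
\iota_{a(v)}\Theta = \iota_{\partial_\vartheta}d\vartheta + \iota_{\partial_\vartheta}\Im(\pi^*\alpha) = 1 + 0 = v,
\end{equation*}
where the second term vanishes because $\partial_\vartheta$ is vertical and $\pi^*\alpha$ annihilates vertical directions; here one matches the normalisation of $a$.

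The only genuine content is the identification of the singular part of the principal connection form with the canonical complex-log generator $\frac{dw}{w}$; this is where I would be most careful, pinning down the local formula for $\zeta$ and confirming that $\Im^*$ carries it to an honest elliptic form. Once that is established, the extension across $D$, the passage to the imaginary part, and the two connection axioms are all immediate, leaving only routine bookkeeping with the conventions for the $S^1$-action.
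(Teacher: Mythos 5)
Your proof is correct and follows essentially the same route as the paper: identify the singular part of $\zeta$ in a local trivialisation with $d\log z_\alpha = \frac{dw}{w}$, use the transformation law of the local connection forms $\xi_\alpha$ (your $\alpha$) to see the $d\log g$-terms cancel, conclude $\zeta$ is a global complex log form, and take imaginary parts. The only difference is that you explicitly verify the two connection axioms for $\Theta$, which the paper's proof leaves implicit after establishing globality of $\zeta$ --- a welcome but routine addition.
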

\begin{proof}
Let $z_{\alpha} : L|_{U_{\alpha}} \rightarrow \cc$ be local fibre coordinates of $L$. 
Write $z_{\alpha} = r_{\alpha}e^{i\theta_{\alpha}}$, and consider the complex log forms $d\log z_{\alpha} = d\log r_{\alpha} + i d\theta_{\alpha}$. If $z_{\alpha} = g^{\beta}_{\alpha}z_{\beta}$, with $g^{\alpha}_{\beta}$ transition functions, we find that $d\log z_{\alpha} - d\log z_{\beta} = d\log g^{\alpha}_{\beta}$. 
Given a complex linear connection $\nabla$ on $L$, we may consider the corresponding connection one-forms $\xi_{\alpha} \in \Omega^1(U_{\alpha};\cc)$, which satisfy $\xi_{\alpha}-\xi_{\beta} = d\log g^{\alpha}_{\beta}$. We conclude that $\zeta := d\log z_{\alpha} + \xi_{\alpha}$ defines a global complex logarithmic form
\end{proof}
If we let $K = K_1 + iK_2$ denote the curvature of the connection $\nabla$ on $L$, we find that $d\Theta = K_2$ is actually a smooth form.
\begin{remark}
The form $\Theta$, restricted to the complement of the zero-section, also goes by the name of the global angular form. The above discussion shows that the global angular form can be extended over the zero-section as an elliptic differential form.
\end{remark}


\subsection{Smoothness of the curvature}
Given a standard $T^k$-action and a corresponding connection one-form $\Theta \in \Omega^1(\elli;\mathfrak{t}^k)$, the curvature, $d\Theta$, is basic and descends to a form in $\Omega^2(\A_{\partial B};\mathfrak{t}^k)$. We saw for complex line bundles that there are connections for which the curvature was in fact a smooth form. This is a general phenomenon for standard torus actions. To prove this, we need to understand the cohomology of the log-tangent bundle on the base. Below we use the residue maps introduced in Definition \ref{def:higherellres} and the discussion thereafter.

\begin{proposition}[Theorem 29 in \cite{MS18},7.24 in \cite{GLPR17}]
Let $(M,I_Z)$ be a manifold endowed with real log divisor. Then there exists an isomorphism
\begin{equation}\label{eq:basecohom}
H^{\bullet}(\mathcal{A}_Z) \simeq H^{\bullet}(M) \oplus H^{\bullet-1}(Z[1]) \oplus H^{\bullet-2}(Z[2]) \oplus \cdots,
\end{equation}
where $Z[n]$ denotes the intersection stratification of $Z$. The maps $H^{\bullet}(\mathcal{A}_Z) \rightarrow H^{\bullet-i}(Z[i])$ are induced by the residue maps $\Res^i$, and the map $H^{\bullet}(M) \rightarrow H^{\bullet}(\mathcal{A}_Z)$ is the pullback via the anchor.
\end{proposition}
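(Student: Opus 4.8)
The plan is to reduce the statement to the classical Mazzeo--Melrose computation for a smooth divisor via a Mayer--Vietoris argument, and then bootstrap to the general normal-crossing case by induction on the maximal multiplicity of $Z$. The key local input is the cohomology of the log-tangent complex of the half-line $[0,\epsilon)$ with its boundary divisor, which is $\rr$ in degrees $0$ and $1$, the degree-one class being generated by the closed log form $\tfrac{dx}{x}$; this is the single computation from which the shift ``$\bullet-1$'' and the residue maps ultimately originate.

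First I would treat the case where $Z=Z[1]$ is smooth, so that only the first two summands appear. Cover $M$ by a tubular collar $U_0\simeq Z\times[0,\epsilon)$ and the complement $U_1=M\setminus Z$. On $U_1$ the anchor $\rho$ is an isomorphism, so $H^\bullet(\A_Z|_{U_1})\cong H^\bullet(M\setminus Z)$; on $U_0$ the algebroid splits as $p^*TZ\oplus\langle x\partial_x\rangle$, and a K\"unneth argument combining $H^\bullet(Z)$ with the half-line computation above gives
\begin{equation*}
H^\bullet(\A_Z|_{U_0})\cong H^\bullet(Z)\oplus H^{\bullet-1}(Z),
\end{equation*}
the second factor realised by wedging with $\tfrac{dx}{x}$; the overlap $U_0\cap U_1\simeq Z\times(0,\epsilon)$ carries ordinary cohomology $H^\bullet(Z)$. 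Feeding these into the Mayer--Vietoris sequence and tracking the restriction maps (on the overlap $\tfrac{dx}{x}$ becomes exact, so the $H^{\bullet-1}(Z)$ summand restricts to zero) yields $H^\bullet(\A_Z)\cong H^\bullet(M)\oplus H^{\bullet-1}(Z)$, with the projection to $H^{\bullet-1}(Z)$ given precisely by $\Res^1$ and the inclusion of $H^\bullet(M)$ by $\rho^*$. The step I expect to be the main obstacle is the careful identification of the Mayer--Vietoris connecting homomorphisms: equivalently, one must verify that the residue admits a cohomological section realised by $[\eta]\mapsto[\tfrac{dx}{x}\wedge p^*\eta]$ and that $\rho^*$ is injective on cohomology, which is the content of the vanishing of the relevant connecting maps and is exactly where the half-line computation is used.

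Finally I would run the induction on the maximal multiplicity $N$ of $Z$. Restricting along $M\setminus Z(2)\hookrightarrow M$ presents $I_Z$ as a smooth divisor with locus $Z[1]$, so the previous step accounts for the summand $H^{\bullet-1}(Z[1])$ via $\Res^1$. To capture the deeper strata I would peel off the top residue: near the closed deepest stratum $Z[N]=Z(N)$ the local normal form of Remark~\ref{rem:locform} exhibits $\A_Z$ as an iterated log-tangent bundle modelled on $([0,\epsilon)\text{-log})^{\times N}\times\rr^{n-N}$, whose cohomology splits by the iterated K\"unneth formula into $\bigoplus_{j} \binom{N}{j} H^{\bullet-j}(Z(N))$, with the top contribution detected by the higher residue $\Res^N$. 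Gluing this deepest piece to the lower-multiplicity part by Mayer--Vietoris, and using at each stratum the smooth-case vanishing of connecting maps established above, assembles the full decomposition $\bigoplus_i H^{\bullet-i}(Z[i])$ with the maps induced by $\Res^i$. The genuine difficulty in this last paragraph is bookkeeping: one must check that the higher residues $\Res^i$ are well-defined cochain maps independent of auxiliary choices, that they are compatible across the nested strata $Z(i)$, and that they coincide with the K\"unneth projections, so that the induction closes and all connecting homomorphisms reduce to the smooth model.
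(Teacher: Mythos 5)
First, a point of order: the paper does not prove this proposition at all --- it is imported by citation from \cite{MS18} and \cite{GLPR17} --- so there is no internal proof to compare yours against, and I am assessing your argument on its own merits. Your smooth-divisor step is correct and standard: the Mayer--Vietoris argument with a collar, the half-line computation, and the observation that the $H^{\bullet-1}(Z)$ summand dies on the overlap is an equivalent packaging of the usual residue sequence $0 \rightarrow \Omega^{\bullet}(M) \rightarrow \Omega^{\bullet}(\A_Z) \rightarrow \Omega^{\bullet-1}(Z) \rightarrow 0$, whose connecting map vanishes because $\chi(x)\,\frac{dx}{x}\wedge p^*\eta$ is a closed lift of any closed $\eta$ (with $\chi$ a cutoff supported in the collar).

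The inductive step, however, has a genuine gap, and it sits exactly in what you defer as ``bookkeeping''. The formula you claim to assemble, $\bigoplus_i H^{\bullet-i}(Z[i])$ with $Z[i]$ the \emph{open} strata (points of multiplicity exactly $i$, which is what $Z[i]$ means in this paper's conventions), is false. Take $M = \rr^2$ and $Z = \set{xy=0}$. Then $Z[1]$ is four open rays, so $H^0(Z[1]) = \rr^4$, while a direct computation gives $H^1(\A_Z) = \rr^2$, spanned by $\frac{dx}{x}$ and $\frac{dy}{y}$: closedness of $g\,\frac{dx}{x} + h\,\frac{dy}{y}$ forces $x\partial_x h(x,0) = 0$, so the residue $h(x,0)$ along the $x$-axis is a single constant --- it cannot jump between the two rays, because it is the restriction of a function smooth across the origin. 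In other words, residues of closed log forms live on \emph{closures} of strata, not on open strata, and the correct statement (the one actually proved in \cite{GLPR17} and \cite{MS18}) replaces $Z[i]$ by the closed $i$-fold intersection loci (their normalizations, in the immersed case). Your own gluing mechanism is what enforces this: the classes $\binom{N}{j}H^{\bullet-j}(Z[N])$ produced near the deepest stratum must be matched on overlaps with classes coming from $Z[j]\cap(M\setminus Z[N])$, and the matching condition is precisely continuity of residues across $Z(j+1)$. So the induction cannot close onto the formula as you state it; what it closes onto is the closed-strata version.

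There is a second, independent defect in the deep-stratum step: the K\"unneth splitting $H^{\bullet}(\A|_V) \simeq \bigoplus_j \binom{N}{j} H^{\bullet-j}(Z[N])$ presumes the $N$ local branches of $Z$ through $Z[N]$ can be globally ordered along $Z[N]$. The paper explicitly allows $Z$ to be an immersed hypersurface with self-crossings, and then monodromy along loops in $Z[N]$ may permute the branches (e.g.\ the mapping torus of the involution swapping the two lines of a planar crossing); in that case the binomial K\"unneth fails and the contribution is the cohomology of a connected finite cover of $Z[N]$ --- again the resolved intersection locus. Both defects are repaired simultaneously by running your induction with the normalized $j$-fold intersection loci from the start; with that reformulation your Mayer--Vietoris strategy does go through and essentially reproduces the argument of the cited references.
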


Using \eqref{eq:basecohom} we see that we can associate a well-defined smooth cohomology class to a class in $H^{\bullet}(\A_Z)$. Explicitly we may also phrase this as follows:
\begin{lemma}\label{lem:smoothandexact}
Let $(M,Z)$ be a manifold endowed with a immersed hypersurface, with self-transverse intersections. Let $\alpha \in \Omega^{\bullet}(M)$ be such that $[\rho^*\alpha] = 0 \in H^{\bullet}(\A_Z)$. Then $[\alpha] = 0 \in H^{\bullet}(M)$.
\end{lemma}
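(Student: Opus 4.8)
The plan is to deduce the statement directly from the cohomology decomposition \eqref{eq:basecohom}. Before invoking it, I would first check that $\alpha$ is closed, so that $[\alpha]$ is actually defined: since the anchor $\rho\colon \A_Z \to TM$ is an isomorphism over the dense open set $M\setminus Z$, the pullback $\rho^*\colon \Omega^{\bullet}(M) \to \Omega^{\bullet}(\A_Z)$ is injective on forms, and it intertwines $d$ with $d_{\A_Z}$. As $[\rho^*\alpha]$ is assumed to be a cohomology class, $\rho^*\alpha$ is $d_{\A_Z}$-closed, whence $\rho^*(d\alpha) = d_{\A_Z}(\rho^*\alpha) = 0$, and injectivity of $\rho^*$ on forms gives $d\alpha = 0$.

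With $[\alpha] \in H^{\bullet}(M)$ in hand, the conclusion is immediate: the decomposition \eqref{eq:basecohom} exhibits $H^{\bullet}(M)$ as the first direct summand of $H^{\bullet}(\A_Z)$, and identifies the corresponding summand inclusion with the anchor pullback $\rho^*$. An inclusion of a direct summand is injective, so $\rho^*\colon H^{\bullet}(M) \to H^{\bullet}(\A_Z)$ is injective. Consequently $\rho^*[\alpha] = [\rho^*\alpha] = 0$ forces $[\alpha] = 0$, as claimed.

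Thus the lemma carries no new content beyond \eqref{eq:basecohom}; all the work is hidden in that decomposition, whose validity for an \emph{immersed} hypersurface with self-transverse intersections (rather than an embedded one) is precisely what is supplied by \cite{MS18,GLPR17}. I expect this to be the only delicate point, since one must organise the complement of the smooth summand by the intersection strata $Z[i]$ and detect it through the residue maps $\Res^i$. If one preferred a self-contained argument avoiding this black box, the same conclusion follows by writing $\rho^*\alpha = d_{\A_Z}\beta$ and peeling off the logarithmic-pole part of $\beta$ stratum by stratum: since $\rho^*\alpha$ is the pullback of a smooth form, its residues all vanish, and this lets one replace $\beta$ by a genuinely smooth primitive $\gamma$ of $\alpha$. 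Globalising this local smooth-part/polar-part splitting of a log form, and keeping track of the strata $Z[i]$, is exactly the content — and the main obstacle — already packaged in \eqref{eq:basecohom}.
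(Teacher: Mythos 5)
Your proof is correct and follows essentially the same route as the paper: the lemma is stated there as an immediate consequence of the decomposition \eqref{eq:basecohom}, which exhibits $\rho^*\colon H^{\bullet}(M)\to H^{\bullet}(\A_Z)$ as the inclusion of a direct summand and hence injective on cohomology. Your preliminary check that $\alpha$ is closed (via injectivity of $\rho^*$ on forms over the dense locus $M\setminus Z$) is a careful addition the paper leaves implicit, but the substance of the argument is identical.
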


Recall from Lemma \ref{prop:quotientmap} that the quotient map of a standard $T^k$-action induces a Lie algebroid submersion $\pi : \elli \rightarrow \mathcal{A}_{\partial B}$. This map is compatible with the residues in the following manner. 
\begin{lemma}
Let $\pi : M \rightarrow B$ be endowed with a standard $T^k$-action. Then for $i = 1,2$ the following diagram commutes:
\begin{center}
\begin{tikzcd}
\Omega^{2}(\elli) \ar[r,"\Res_r^i"] & \Omega^{2-i}(T^iND[i]) \\
\Omega^2(\mathcal{A}_{\partial B}) \ar[r,"\Res^i"] \ar[u,"\pi^*"] & \Omega^{2-i}(\partial B[i]) \ar[u,"\pi^*"]
\end{tikzcd}
\end{center}
\end{lemma}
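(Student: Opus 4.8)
The plan is to reduce everything to the local normal form of a standard $T^k$-action and compute both composites by hand. All four maps in the square are \emph{local and natural}: the vertical maps are the pullbacks induced by the Lie algebroid submersion $\tilde\pi\colon\elli\to\mathcal{A}_{\partial B}$ of Proposition~\ref{prop:quotientmap}, and the residues are defined by contracting with the canonical (elliptic, resp.\ log) Euler vector fields and restricting to a stratum. Since contraction, restriction and pullback all commute with restriction to an adapted chart, it suffices to verify commutativity in a single chart around a point of $D[i]$.

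In the local model $M\simeq T^{k-l}\times\cc^l\times\rr^m$ the quotient is $B\simeq(\rr_{\geq0})^l\times\rr^m$ with coordinates $(t_1,\dots,t_l,x_1,\dots,x_m)$, where $t_j=r_j^2=\abs{z_j}^2$, so that $I_{\partial B}=\inp{t_1\cdots t_l}$ and $\pi^*t_j=r_j^2$. Writing $z_j=r_je^{i\theta_j}$, Remark~\ref{rem:locform} gives the generators $r_j\partial_{r_j},\partial_{\theta_j},\partial_{x_i}$ of $\elli$ and $t_j\partial_{t_j},\partial_{x_i}$ of $\mathcal{A}_{\partial B}$. Computing the anchors one finds that $\tilde\pi$ sends
\begin{equation*}
r_j\partial_{r_j}\longmapsto 2\,t_j\partial_{t_j},\qquad \partial_{\theta_j}\longmapsto 0,\qquad \partial_{x_i}\longmapsto\partial_{x_i},
\end{equation*}
the constant coming from $t_j=r_j^2$. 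Dually, $\pi^*$ maps the log generators $d\log t_j\mapsto 2\,d\log r_j$ and $dx_i\mapsto dx_i$; in particular $\pi^*\alpha$ never contains a $d\theta_j$ component.

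I would then take $\alpha\in\Omega^2(\mathcal{A}_{\partial B})$, decompose it in the dual basis, and chase both ways. Because $\partial_{\theta_j}\mapsto0$, the fibrewise contraction $\Res_r^i(\pi^*\alpha)=\iota_{r_1\partial_{r_1}}\cdots\iota_{r_i\partial_{r_i}}(\pi^*\alpha)$ only sees the component of $\alpha$ along $d\log t_1\wedge\cdots\wedge d\log t_i$, which is exactly the component extracted by $\Res^i\alpha=\iota_{t_1\partial_{t_1}}\cdots\iota_{t_i\partial_{t_i}}\alpha$; the smooth factors $dx_i$ are transported identically under the two pullbacks. Since $\pi^*\alpha$ has no angular component, $\Res_r^i(\pi^*\alpha)$ is constant along the $T^i$-fibres of $T^iND[i]\to D[i]$ and agrees with the pullback of $\Res^i\alpha$ from $\partial B[i]$, up to the normalisation constant fixed by the choice $t_j=r_j^2$ (which the paper's normalisation of the residues absorbs). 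For $i=2$ one uses the combinatorial definition of $\Res_r^2$ from Definition~\ref{def:higherellres} and of $\Res^2_{x_ix_j}$ in place of a single contraction.

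The genuinely structural input is the explicit form of $\tilde\pi$ and the vanishing of angular terms on pullbacks; everything else is bookkeeping. Accordingly, I expect the main friction to be the $i=2$ case: there the residues are only defined up to sign and are assembled combinatorially, so the real work is checking that the several terms composing the higher elliptic residue pair up correctly with the higher log residue $\Res^2_{x_ix_j}$ on $\partial B[2]$, and that the normalisation constants match on each term. No new idea beyond the local computation of $\tilde\pi$ should be required.
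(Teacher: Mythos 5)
Your proof is correct and is precisely the local-normal-form computation the paper implicitly relies on: the lemma is stated there without proof, and the explicit determination of the algebroid submersion ($r_j\partial_{r_j}\mapsto 2\,t_j\partial_{t_j}$, $\partial_{\theta_j}\mapsto 0$, $\partial_{x_i}\mapsto\partial_{x_i}$) followed by the component chase is exactly the intended argument. One caveat: your parenthetical claim that the paper's normalisation of the residues ``absorbs'' the constant coming from $t_j=r_j^2$ is not accurate --- with the residues as literally defined (contraction with $r_j\partial_{r_j}$ upstairs, with $t_j\partial_{t_j}$ downstairs), the square commutes only up to the factor $2^i$, since $\pi^*d\log t_j=2\,d\log r_j$ and no choice of boundary coordinate can remove this. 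This discrepancy is harmless for the paper's use of the lemma (in Lemma \ref{lem:smoothcurv} only the vanishing of residues matters), but it should be stated as commutativity up to a universal nonzero constant rather than assumed away.
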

As $D[2]$ is a discrete set of points, $\Res^2_r\omega$ is defined to be $\Res_{r_1r_2}(\omega)(p)$. A similar result holds for higher order residues, but is more involved to state.

Using the description of the cohomlogy of $\A_{\partial B}$, we can show that the curvature of a connection, $d\Theta$, is cohomologically smooth:
\begin{lemma}\label{lem:smoothcurv}
Let $M$ be endowed with a standard $T^k$-action and let $\Theta \in \Omega^1(\elli;\mathfrak{t}^k)$ be a connection-one form with curvature $K \in \Omega^2(\mathcal{A}_{\partial B},\mathfrak{t}^k)$. Then there exists a smooth form $\tilde{K} \in \Omega^2(B,\mathfrak{t}^k)$ with $[\rho^*\tilde{K}] = [K] \in H^2(\mathcal{A}_{\partial_B};\mathfrak{t}^k)$.
\end{lemma}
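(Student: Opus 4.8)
The plan is to show that the class $[K]\in H^2(\mathcal{A}_{\partial B};\mathfrak{t}^k)$ is smooth by proving that all of its residues vanish, and then to read off a smooth representative from the decomposition \eqref{eq:basecohom}. In degree two the latter reads
\begin{equation*}
H^2(\mathcal{A}_{\partial B};\mathfrak{t}^k)\;\simeq\; H^2(B;\mathfrak{t}^k)\,\oplus\, H^1(\partial B[1];\mathfrak{t}^k)\,\oplus\, H^0(\partial B[2];\mathfrak{t}^k),
\end{equation*}
the higher strata contributing $H^{<0}=0$; here the first summand is the image of the anchor pullback and the other two are cut out by $\Res^1$ and $\Res^2$. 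Thus producing $\tilde{K}$ with $[\rho^*\tilde{K}]=[K]$ is equivalent to showing $\Res^1[K]=0\in H^1(\partial B[1];\mathfrak{t}^k)$ and $\Res^2[K]=0\in H^0(\partial B[2];\mathfrak{t}^k)$.

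The decisive observation is that, although $K$ itself need not be exact, its pullback is: since $K$ is the descent of the curvature we have $\pi^*K=d\Theta$ with $\Theta\in\Omega^1(\elli;\mathfrak{t}^k)$ a globally defined elliptic one-form, so $\Res^i_r(\pi^*K)=\Res^i_r(d\Theta)$. For $i=2$ I would compute this residue pointwise on the discrete stratum $D[2]$: using $\Res^2_r\omega(p)=\omega_p(r_1\partial_{r_1},r_2\partial_{r_2})$ together with $[r_1\partial_{r_1},r_2\partial_{r_2}]=0$ and the fact that the smooth vector fields $r_j\partial_{r_j}$ vanish at the fixed point $p$, one finds $d\Theta(r_1\partial_{r_1},r_2\partial_{r_2})(p)=0$, so $\Res^2_r(d\Theta)=0$ identically. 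By the commuting square of residues from the previous lemma, $\pi^*(\Res^2 K)=\Res^2_r(d\Theta)=0$, and since $\pi\colon T^2ND[2]\to\partial B[2]$ is a surjective submersion $\pi^*$ is injective on functions, giving $\Res^2K=0$. For $i=1$ I instead use that $\Res_r$ is a chain map, so $\Res_r(d\Theta)=d(\Res_r\Theta)$ is exact; the commuting square then yields $\pi^*(\Res^1[K])=[\Res_r(d\Theta)]=0$ in $H^1(S^1ND[1];\mathfrak{t}^k)$. Since the pullback along the fibration $S^1ND[1]\to\partial B[1]$ (whose fibre is connected) is injective on $H^1$ via the Leray--Serre edge map $H^1(\partial B[1])\hookrightarrow H^1(S^1ND[1])$, equivalently the relevant piece of the Gysin sequence, I conclude $\Res^1[K]=0$. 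Feeding both vanishings back into the decomposition places $[K]$ in the smooth summand $H^2(B;\mathfrak{t}^k)$, which is exactly the assertion.

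The part that needs the most care is the interface with the two fibrations $T^iND[i]\to\partial B[i]$: one must identify these as genuine fibre bundles, with fibre the relevant isotropy torus, in order to license the injectivity of $\pi^*$ on $H^0$ and on $H^1$, and one must make sure the commuting square of residues is available in precisely the form used here (for $i=1,2$ it is the content of the preceding lemma). The input that makes everything collapse---that the curvature becomes \emph{exact}, not merely closed, after pulling back to $\elli$, because there it is literally $d$ of the globally defined connection one-form---is special to this elliptic setup and is what replaces the usual ``curvature is closed but not exact'' bookkeeping of the principal case.
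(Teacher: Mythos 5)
Your proof is correct, and its skeleton coincides with the paper's: both reduce the statement, via the decomposition \eqref{eq:basecohom}, to the vanishing of $\Res^1[K]$ and $\Res^2[K]$ (the higher residues vanishing for rank reasons), both exploit that $\pi^*K = d\Theta$ is exact in $\Omega^2(\elli;\mathfrak{t}^k)$, and your pointwise computation of $\Res^2_r(d\Theta)$ (Cartan formula, commuting sections, anchors vanishing along $D[2]$) is an invariant rephrasing of the paper's appeal to ``smoothness of the coefficient functions of elliptic one-forms''. The one genuine divergence is the $\Res^1$ step. The paper stays at the level of forms: since $\Theta$ is $T^k$-invariant, the function $\Res^1_r\Theta$ on $S^1ND[1]$ is invariant, hence descends to some $f$ on the base stratum, so that $\Res^1 K = df$ is exact on the nose, with an explicit primitive. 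You instead pass to cohomology and invoke injectivity of $\pi^*\colon H^1(\partial B[1];\mathfrak{t}^k)\to H^1(S^1ND[1];\mathfrak{t}^k)$ via the low-degree Leray--Serre exact sequence for the bundle $S^1ND[1]\to\partial B[1]$. This is valid: the local normal form of a standard action exhibits this map as a fibre bundle with connected fibre, namely the full orbit torus $T^{k-1}\times S^1\cong T^k$ (not the isotropy torus, as you write --- harmless, since only connectedness is used). The trade-off: the paper's argument is more elementary, never leaves the complex of forms, and consumes the $T^k$-invariance of $\Theta$; yours needs no invariance of the primitive at all, at the price of importing spectral-sequence input and of verifying the bundle structure --- which is, in the end, the same local-model fact that underlies the paper's descent of invariant functions.
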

\begin{proof}
Using \eqref{eq:basecohom} we have to show that the residues $\Res^i$ of $K$ vanish. By rank reasons $\Res^i$ vanishes for $i\geq 3$. We have that $p^*\Res^1(K) = \Res^1_r(d\Theta) = d \Res^1_r(\Theta)$. By $T^k$-invariance of $\Theta$ we must have that $\Res^1_r(\Theta)$ is $T^k$-invariant as well, and thus of the form $p^*f$, with $f \in C^{\infty}(B)$. Therefore $p^*\Res^1(K) = p^*df$, and we conclude that $\Res^1(K)$ is exact. We also have that $p^*\Res^2(K) = \Res^2_r(d\Theta)$, but the latter is necessarily zero by smoothness of the coefficient functions of elliptic one-forms. We conclude that all the residues of $[K]$ vanish, as these are precisely the maps $H^{\bullet}(\A_{\partial B}) \rightarrow H^{\bullet-i}(\partial B[i])$ in Equation \eqref{eq:basecohom}, we conclude that $[K] = [\rho^*\tilde{K}]$, for some smooth $\tilde{K} \in \Omega^2(B;\mathfrak{t}^k)$. 
\end{proof}

The following example shows that Lemma \ref{lem:smoothcurv} is not a given when the elliptic divisor and action are independent:
\begin{example}
Let $M$ be a manifold endowed with a smooth elliptic divisor $I_{\abs{D}}$, with $D$ co-orientable, and consider the trivial $S^1$-bundle, $\pi : M \times S^1 \rightarrow M$. The elliptic divisor is trivially compatible with the $S^1$-action, and hence we can choose a connection one-form for this action. Any such form will be given by $\Theta = \alpha + d\theta$, with $\alpha \in \Omega^1(\elli)$ and $d\theta \in \Omega^1(S^1)$.

We may pick $\alpha = 0$, and then the curvature of $\Theta$ simply vanishes. However, we may change $\alpha$ in such a manner that $d\Theta$ remains smooth but is no longer exact as an element in $H^2(M)$, contradicting the conclusion of Lemma \ref{lem:smoothcurv}.

The construction goes as follows: Consider a metric on a tubular neighbourhood $\mathcal{U}$ of $D$ in $M$, and let $r^2 : \mathcal{U} \rightarrow \rr$ denote the associated squared distance function. Let $\Theta_M \in \Omega^1(\elli)$ be a connection one-form on $\mathcal{U}$, obtained as in Lemma \ref{lem:linebundleconn} by viewing $\mathcal{U}$ as an (open) in a complex line bundle. 

Let $f : \rr \rightarrow \rr$ be a smooth function with ${\rm supp} f \subset [-1,1]$, and $f|_{[-\epsilon,\epsilon]} \equiv 1$ for some $\varepsilon < 1$. Define an elliptic form $\alpha = f(r^2)\Theta_M$. The cohomology class $d\alpha$ is concentrated in $\mathcal{U}$, where it is precisely the curvature of the normal bundle of $D$. Consequently, we see that $\Theta = \alpha + d\theta$ has smooth curvature, which is not cohomologous to zero.
\end{example}

\subsubsection{Local description of connection one-forms}
For future reference we recall the local description for connection one-forms on principal bundles:
\begin{proposition}\label{prop:localconform}
Let $\pi : P \rightarrow B$ be a principal $G$-bundle, and let $h_i : \pi^{-1}(U_i) \rightarrow U_i \times G$ be local trivialisations for $i=1,2$. Let $\Theta\in \Omega^1(P;\mathfrak{g})$ be a connection one-form on $P$. Then
\begin{equation*}
(h_i^{-1})*\Theta = \omega_{\rm MC} + \pi^*\alpha_i,
\end{equation*}
with $\omega_{\rm MC} \in \Omega^1(G;\mathfrak{g})$ the Maurer-Cartan form, and $\alpha_i \in \Omega^1(U_i;\mathfrak{g})$. Moreover, if we let $f_{ji} \in C^{\infty}(U_i\cap U_j;G)$ be the transition functions given by $h_j^{-1} \comp h_i(x,g) = (x,f_{ji}(x)\cdot g)$, we have
\begin{equation*}
\alpha_i - {\rm Ad}^*_{f_{ji}}\alpha_j = \pi^*d\log f_{ji}.
\end{equation*}
\end{proposition}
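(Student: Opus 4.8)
The plan is to reduce the statement to a purely local computation in the chosen trivializations, where the Maurer--Cartan form and the transition cocycle do the bookkeeping. First I would recall the defining properties of a connection one-form $\Theta \in \Omega^1(P;\mathfrak{g})$: it is $G$-equivariant (equivalently $m_g^*\Theta = \mathrm{Ad}_{g^{-1}}^* \Theta$) and reproduces the Maurer--Cartan form on the fibers ($\iota_{a(v)}\Theta = v$ for $v \in \mathfrak{g}$). Working in a single trivialization $h_i\colon \pi^{-1}(U_i) \xrightarrow{\simeq} U_i \times G$, the pullback $h_i^*\Theta$ is a $\mathfrak{g}$-valued one-form on $U_i \times G$. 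The two defining properties of $\Theta$ translate, under the trivialization, into: the $G$-part must coincide with $\omega_{\mathrm{MC}}$ (since $\Theta$ is Maurer--Cartan on fibers and the $G$-action on $U_i \times G$ is by left/right translation), while the $U_i$-part is some form pulled back from the base. Concretely I would write $h_i^*\Theta = \omega_{\mathrm{MC}} + \pi^*\alpha_i$ as an \emph{ansatz} and then verify that $\alpha_i := \sigma_i^*\Theta$, where $\sigma_i\colon U_i \to \pi^{-1}(U_i)$ is the identity section of the trivialization, is the correct base form; equivariance of $\Theta$ guarantees this decomposition holds pointwise over all of $U_i \times G$, not just along the section.

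For the cocycle relation I would compare the two trivializations on the overlap $U_i \cap U_j$. The key geometric input is the relation between the identity sections $\sigma_i$ and $\sigma_j$: by definition of the transition function $f_{ji}$, one has $\sigma_i(x) = \sigma_j(x)\cdot f_{ji}(x)$, so $\sigma_i = m_{f_{ji}} \circ \sigma_j$ where $m_{f_{ji}}$ denotes the (pointwise) right translation by $f_{ji}(x)$. Pulling back $\Theta$ along this relation and applying the equivariance property $m_g^*\Theta = \mathrm{Ad}_{g^{-1}}^*\Theta$ — now with $g$ varying as $f_{ji}(x)$ over the base — produces the two terms: the adjoint-twisted form $\mathrm{Ad}_{f_{ji}}^*\alpha_j$ and a Maurer--Cartan correction coming from the differentiation of the varying group element. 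I would compute $\sigma_i^*\Theta$ by the Leibniz-type rule for pulling back along a map into $P$ built from a section composed with a base-dependent group translation; the correction term is exactly $f_{ji}^*\omega_{\mathrm{MC}} = d\log f_{ji}$ (in the abelian or matrix-group notation the paper uses), whence $\alpha_i - \mathrm{Ad}_{f_{ji}}^*\alpha_j = \pi^* d\log f_{ji}$.

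The main obstacle I anticipate is handling the base-\emph{dependence} of the group element $f_{ji}(x)$ cleanly when differentiating. For a constant group element the pullback $m_g^*\Theta = \mathrm{Ad}_{g^{-1}}^*\Theta$ is immediate from equivariance, but here $f_{ji}$ varies over $U_i \cap U_j$, so the differential of the composite map $x \mapsto \sigma_j(x)\cdot f_{ji}(x)$ splits into a ``section direction'' and a ``group direction,'' and only careful application of the chain rule for the group multiplication map $P \times G \to P$ isolates the extra Maurer--Cartan term. The cleanest way to make this rigorous is to work infinitesimally: evaluate both sides on a tangent vector $v \in T_x(U_i\cap U_j)$, decompose $d(\sigma_j \cdot f_{ji})(v)$ using the derivative of the action, and read off the two contributions. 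This is the step where the sign conventions and the precise form of the equivariance ($\mathrm{Ad}$ versus $\mathrm{Ad}^*$, left versus right) must be tracked with care, and it is where the Maurer--Cartan form of $G$ enters as $d\log f_{ji}$; everything else is a formal consequence of the defining axioms of $\Theta$.
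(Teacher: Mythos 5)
The paper itself contains no proof of this proposition: it is recalled as a classical fact about principal bundles (``For future reference we recall the local description...''), so there is nothing in-paper to compare against line by line. Your argument is the standard textbook proof and is essentially correct: setting $\alpha_i := \sigma_i^*\Theta$ for the identity section $\sigma_i(x) = h_i^{-1}(x,e)$, extending the decomposition from the section to all of $U_i \times G$ via equivariance and the fiber-reproducing property, and then deriving the cocycle relation from $\sigma_i = \sigma_j \cdot f_{ji}$ by differentiating through the action map $P \times G \to P$, which produces precisely the $\mathrm{Ad}$-twisted term $\alpha_j$ plus the Maurer--Cartan correction $f_{ji}^*\omega_{\rm MC} = d\log f_{ji}$. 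Your identification of the delicate step --- the base-dependence of $f_{ji}$ when differentiating, handled by splitting $d(\sigma_j\cdot f_{ji})(v)$ into a section direction and a group direction --- is exactly where the content of the computation lies.

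One caveat you should flag in a careful write-up: the first displayed formula, as literally stated (both in the paper and in your ansatz), is valid only for \emph{abelian} $G$. Carrying out your own verification at a general point $(x,g)$ --- decompose a tangent vector into base and fiber directions, push through $h_i^{-1}$, and apply equivariance --- yields $(h_i^{-1})^*\Theta = \omega_{\rm MC} + \mathrm{Ad}_{g^{-1}}\,\pi^*\alpha_i$; the adjoint twist on the base term does not disappear. Your phrase ``equivariance of $\Theta$ guarantees this decomposition holds pointwise over all of $U_i\times G$'' is precisely the step where that twist enters, so as written the extension-off-the-section argument proves a slightly different formula than the one claimed. Since the paper only applies the proposition to $G = T^k$ (where $\mathrm{Ad}$ is trivial and $d\log f_{ji}$ makes componentwise sense), this imprecision is inherited from the statement rather than a flaw in your reasoning, but you should either restrict to abelian structure groups or carry the $\mathrm{Ad}_{g^{-1}}$ factor explicitly.
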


\subsection{Cech--de Rham isomorphism}
To continue, we need to recall a basic version of the \v{C}ech-de Rham isomorphism.

Let $M$ be endowed with a proper group action by some group $G$. Let $\Omega^n_{\bas}$ denote the sheaf of basic differential forms and let $Z^n_{\rm bas}$ denote the sheaf of closed basic differential forms. Both are sheaves of $C^{\infty}(M)_{\rm bas}$-modules and fit into the short exact sequence
\begin{equation}\label{eq:CechdeRham}
0 \rightarrow Z^n_{\rm bas} \rightarrow \Omega^{n}_{\bas} \rightarrow Z^{n+1}_{\rm bas} \rightarrow 0.
\end{equation}
Consider the basic \v{C}ech cocycle groups $C_{\bas}^k(\mathcal{U},\mathcal{F})$, for which the maps $U_{i_1\ldots i_k} \rightarrow \mathcal{F}$ are invariant with respect to the $G$-action on $U_{i_1\ldots i_k}$.

By existence of a $G$-invariant partition of unity on $M$, the sheaves $\Omega^n_{\rm bas}$ are fine, and hence $H^i_{\bas}(M,\Omega^n_{\rm bas}) = \set{0}$ for all $i \geq 1$. Therefore the connecting morphism of the above exact sequence induces an isomorphism $H^i_{\bas}(M,Z^{n+1}_{\rm bas}) \rightarrow H^i_{\bas}(M,Z^n_{\rm bas})$. Applying this consecutively, one obtains the isomorphism between $H^{i+1}_{\bas}(M,\rr) \rightarrow H^1_{\bas}(M,Z^i_{\bas})$. As the latter corresponds to $H^{i+1}_{dR,\bas}(M)$ one obtains the desired isomorphism.

We also consider a basic version of the exponential sequence:
\begin{equation}
0 \rightarrow \mathbb{Z}^n \rightarrow C^{\infty}_{\rm bas}(M;\rr^n) \rightarrow C^{\infty}_{\bas}(M;T^n) \rightarrow 0.
\end{equation}
The long exact sequence will induce an isomorphism $\check{H}^1_{\bas}(M,C^{\infty}_{\bas}(M;T^n)) \rightarrow \check{H}^2_{\rm bas}(M;\mathbb{Z}^n)$. Combining this with the \v{C}ech-de Rham isomorphism we obtain a map to $H^2_{\rm bas}(M;\mathbb{Z}^n)$.

\subsection{Connection approach to Haefliger-Salem class}
We now have all the theory in place to compare the curvature of a connection for a standard $T^k$-action with the Haefliger-Salem class:
\begin{proposition}\label{prop:comparison}
Let $\pi : M \rightarrow B$ be a manifold endowed with a standard $T^k$-action and let $\pi' : M' \rightarrow B$ be a manifold endowed with a locally equivalent action. Let $\Theta,\Theta'$ be connection one-forms on $M$ respectively $M'$. Then $[d\Theta-d\Theta'] \in H^2(A_{\partial B};\mathfrak{t}^k)$ is representable by a smooth form, and descends to the Haefliger-Salem class of $M'$ in $\check{H}^2(B;\mathfrak{t}^k)$.
\end{proposition}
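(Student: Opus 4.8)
The plan is to reduce the statement to a local, cocycle-level computation that connects the curvature difference to the Haefliger-Salem cocycle via the \v{C}ech--de Rham correspondence recalled in the previous subsection. First I would establish that $[d\Theta - d\Theta']$ is representable by a smooth form. This is almost immediate from Lemma \ref{lem:smoothcurv}: both $d\Theta$ and $d\Theta'$ descend to curvatures in $\Omega^2(\mathcal{A}_{\partial B};\mathfrak{t}^k)$, and each is cohomologous to the pullback of a smooth form on $B$. Hence their difference is cohomologous to a smooth form, and moreover, since $[d\Theta]$ and $[d\Theta']$ are compared on the \emph{same} base $\mathcal{A}_{\partial B}$, the difference $[d\Theta - d\Theta']$ is a genuine class in $H^2(\mathcal{A}_{\partial B};\mathfrak{t}^k)$ whose residues vanish by the same argument as in Lemma \ref{lem:smoothcurv} (the residue maps are linear, so they kill the difference just as they kill each term).

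The heart of the proof is the identification with the Haefliger-Salem class. Here I would work over the cover $\mathcal{U} = \{U_i\}$ from the proof of Proposition \ref{prop:classification}, on which we have equivariant diffeomorphisms $h_i : \pi^{-1}(U_i) \to \pi'^{-1}(U_i)$ and transition functions $f_{ji} \in C^{\infty}(U_i \cap U_j; T^k)$ satisfying $h_j^{-1}\comp h_i(x) = f_{ji}(\pi(x))\cdot x$, whose class under the exponential sequence is the Haefliger-Salem class. The key local computation is to pull back the connection one-form $\Theta'$ on $M'$ via the $h_i$ to connection one-forms $h_i^*\Theta'$ on $\pi^{-1}(U_i)$, all of which are connection one-forms for the \emph{same} $T^k$-action on $M$ over $U_i$. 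Over each $U_i$ the difference $\Theta - h_i^*\Theta'$ is horizontal, i.e.\ $\iota_{a(v)}(\Theta - h_i^*\Theta') = 0$, so it descends to a one-form $\alpha_i \in \Omega^1(\mathcal{A}_{\partial B}|_{U_i};\mathfrak{t}^k)$. On overlaps, the discrepancy between $h_i^*\Theta'$ and $h_j^*\Theta'$ is governed by the transition map, and by the local formula for connection one-forms (the elliptic analogue of Proposition \ref{prop:localconform}) one expects $\alpha_i - \alpha_j = \pi^* d\log f_{ji}$, mirroring the transition law $\alpha_i - \mathrm{Ad}^*_{f_{ji}}\alpha_j = \pi^*d\log f_{ji}$ there (the adjoint action is trivial here since $T^k$ is abelian). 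This exhibits $\{\alpha_i\}$ as a \v{C}ech--de Rham datum whose associated global class is $d\Theta - d\Theta'$ and whose \v{C}ech boundary is precisely $\{d\log f_{ji}\}$, which under the exponential sequence is the Haefliger-Salem cocycle.

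Concretely, I would assemble these pieces through the \v{C}ech--de Rham machine of the previous subsection: the collection $\{\alpha_i\}$ with $d\alpha_i = d\Theta - d\Theta'$ locally (so the $d\alpha_i$ glue to a global closed form) and $\alpha_i - \alpha_j = \pi^* d\log f_{ji}$ realizes the class $[d\Theta - d\Theta']$ as the image of the \v{C}ech $1$-cocycle $\{d\log f_{ji}\}$ under the connecting isomorphism $H^1_{\bas}(M, Z^1_{\bas}) \to H^2_{dR,\bas}(M)$. Since $\{d\log f_{ji}\}$ is exactly the image of the Haefliger-Salem cocycle $\{f_{ji}\}$ under the map induced by the exponential sequence, tracing through the combined isomorphism $\check{H}^2_{\bas}(M;\mathbb{Z}^k) \to H^2_{\bas}(M;\mathbb{R}^k)$ identifies $[d\Theta - d\Theta']$ with the Haefliger-Salem class as claimed. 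Because all forms and functions involved are basic, everything descends to $B$, so the final class lives in $\check{H}^2(B;\mathfrak{t}^k)$.

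I expect the main obstacle to be the overlap computation $\alpha_i - \alpha_j = \pi^* d\log f_{ji}$. The subtlety is that $h_i^*\Theta'$ and $h_j^*\Theta'$ differ not by a base change but by pulling back along the self-map $h_j^{-1}\comp h_i$ of $\pi^{-1}(U_i \cap U_j)$, which by Theorem \ref{th:HS1} is translation by $f_{ji}(\pi(x))$. One must verify carefully that pulling back a connection one-form along such a fibre-translation shifts it by exactly $\pi^* d\log f_{ji}$ (the pullback of the Maurer-Cartan contribution), and that this identity persists across the elliptic divisor $D$ where the anchor degenerates — this is where the local normal form of the standard action and the fact that $d\log f_{ji}$ extends as a smooth, rather than merely elliptic, form must be used. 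Establishing this transition law rigorously, both on the free locus and across $D$, is the technical crux; the rest is bookkeeping through the \v{C}ech--de Rham and exponential isomorphisms already recalled.
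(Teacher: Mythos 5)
Your strategy is essentially the paper's: the same cover and equivariant maps $h_i$, the forms $\alpha_i = \Theta - h_i^*\Theta'$ (the paper calls them $\eta_i$), the transition law $\alpha_i - \alpha_j = \pi^*d\log f_{ji}$ obtained from Proposition \ref{prop:localconform} away from $\partial B$ and extended by density (exactly the one-line argument you anticipate; this part is not the crux), and then the exponential-sequence plus \v{C}ech--de Rham bookkeeping. The first claim (representability by a smooth form) is also handled as you say, except that the paper does something slightly stronger which turns out to matter later: it uses Lemma \ref{lem:smoothcurv} to \emph{modify} $\Theta$ and $\Theta'$ so that $d\Theta$ and $d\Theta'$ are genuinely smooth forms, not merely cohomologous to smooth ones.

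The genuine gap is in your third paragraph, where you assert that the collection $\{\alpha_i\}$ realizes $[d\Theta - d\Theta']$ as the image of the \v{C}ech $1$-cocycle $\{d\log f_{ji}\}$ under the connecting isomorphism $H^1_{\bas}(M,Z^1_{\bas}) \to H^2_{dR,\bas}(M)$. That isomorphism is built from the short exact sequence \eqref{eq:CechdeRham} of sheaves of \emph{smooth} basic forms, and its construction (via invariant partitions of unity and fineness) produces smooth basic primitives: the image of the cocycle $\{\pi^*d\log f_{ji}\}$ is by definition $[d\tilde\eta_i]$, where the $\tilde\eta_i$ are \emph{smooth} basic one-forms on a refinement satisfying $\tilde\eta_i - \tilde\eta_j = \pi^*d\log f_{ji}$. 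Your $\alpha_i$ are basic \emph{elliptic} one-forms, sections of $\Omega^1(\elli;\mathfrak{t}^k)$, so they cannot serve as these primitives, and a priori $[d\tilde\eta_i] \neq [d\alpha_i] = [d\Theta - d\Theta']$. This is exactly where the elliptic geometry bites and where the paper does extra work: the differences $\tilde\eta_i - \alpha_i$ agree on overlaps and glue to a globally defined elliptic form $\eta$ whose differential $d\eta = d\tilde\eta_i - (d\Theta - h_i^*d\Theta')$ is smooth (this is why one needs the curvatures to be actually smooth, not just smooth in cohomology) and is exact in the elliptic/log complex by construction; Lemma \ref{lem:smoothandexact} --- a smooth form whose pullback is exact in $H^{\bullet}(\mathcal{A}_{\partial B})$ is already exact in $H^{\bullet}(B)$ --- then gives $[d\tilde\eta_i] = [d\Theta - d\Theta']$. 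Without this comparison the identification of the two classes is unproven. Note also that you located the technical crux in the wrong place: the overlap law across $D$ is disposed of by density, whereas the smooth-versus-elliptic primitive comparison is the real content of the proof.
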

\begin{proof}
Let $I_{\abs{D}},I_{\abs{D'}}$ denote the induced elliptic divisors on $M$ respectively $M'$. By Lemma \ref{lem:smoothcurv}, we have that both $[d\Theta]$ and $[d\Theta']$ are representable by smooth forms, therefore we can modify $\Theta$ and $\Theta'$ such that $d\Theta$ and $d\Theta'$ are smooth forms. We will still denote the modified connections by $\Theta$ and $\Theta'$.

Let $\mathcal{U} = \set{U_i}$  be an open cover of $B$, together with equivariant diffeomorphism $h_i : \pi^{-1}(U_i) \rightarrow \pi'^{-1}(U_i)$ provided by the fact that the actions on $M$ and $M'$ are locally equivalent. Furthermore, assume that over $U_i\backslash \partial B$ both $\pi$ and $\pi'$ are trivalisable as principal $T^k$-bundles.

Because the $h_i$ are equivariant, they are morphisms of divisors, and hence we may consider $h_i^*\Theta' \in \Omega^1(\elli|_{\pi^{-1}(U_i)};\mathfrak{t}^k)$. One readily shows that $h_i^*\Theta'$ is a connection on $\pi^{-1}(U_i)$ and thus $\eta_i := \Theta-h_i^*\Theta' \in \Omega^1_{\bas}(\elli|_{\pi^{-1}(U_i)};\mathfrak{t}^k)$ is a basic form. 

Let $f_{ji} \in C^{\infty}(U_i\cap U_j;T^k)$ be the transition functions given by $h_j\comp h_i(x) =(x,f_{ji}(x))$. By Lemma \ref{prop:localconform}, $\eta_i - \eta_j = h_i^*\Theta' - h_j^*\Theta'$ coincides with $\pi^*d\log f_{ji}$ on $\pi^{-1}((U_i \cap U_j)\backslash \partial B)$. By density, we have that $\eta_i - \eta_j = \pi^*d\log f_{ji}$ on the entirety of $\pi^{-1}(U_i\cap U_j)$. As the $\eta_i$ are closed, we conclude that $\set{\eta_i-\eta_j}_{ij}$ defines a cocycle in $\check{C}^1_{\rm bas}(M,Z^1_{\bas})$.

The Haefliger-Salem class is defined by applying the exponential sequence to the class $\set{f_{ji}}$ in $\check{H}^1(\mathcal{U};\mathcal{T}^k)$. Inspecting the \v{C}ech-de Rham isomorphism, we conclude that this class corresponds to the class defined by $\set{\eta_i-\eta_j}_{ij}$. We are left to show that $\set{\eta_i -\eta_j}_{ij} \in \check{H}^1(\mathcal{U},Z^1_{\bas})$ corresponds to $[d\Theta - d\Theta']$. By exactness of \eqref{eq:CechdeRham}, we know that there exist a refinement $\tilde{\mathcal{U}} = \set{\tilde{U}_i}$ of $\mathcal{U}$ and $\tilde{\eta_i} \in \Omega^1_{\bas}(\pi^{-1}(\tilde{U}_i));\mathfrak{t})$ such that $\tilde{\eta}_i - \tilde{\eta}_j = \eta_i - \eta_j$. The class in $H^2_{dR,\bas}(M,\mathfrak{t}^k)$ corresponding to $\set{\eta_i-\eta_j}_{ij}$ is given by $d\tilde{\eta}_i$. 

Note that $\tilde{\eta}_i -\eta_i = \tilde{\eta}_j - \eta_j$, and hence these forms glue to give a globally defined form $\eta \in \Omega^1_{\bas}(M;\mathfrak{t})$. Because $d\eta = d\tilde{\eta}_i -d\eta_i$, and $d\eta_i = d\Theta - h_i^*d\Theta'$, we have that $d\eta$ is smooth. Hence, by Lemma \ref{lem:smoothandexact} we have that $d\eta$ is exact in $H^2(B;\mathfrak{t}^k)$. Consequently $[d\tilde{\eta}_i] = [d\eta_i] \in H^2(B;\mathfrak{t}^k)$. But as $d\eta_i = d\Theta - h_i^*d\Theta'$, we conclude that the class in $H^2_{dR,\bas}(M,\mathfrak{t}^k)$ corresponding to $\set{\eta_i-\eta_j}_{ij}$ is given by $d\Theta - d\Theta'$, which finishes the proof. 
\end{proof}
The description of the Haefliger-Salem class in terms of connections has the following consequence:
\begin{lemma}
Let $\pi : M \rightarrow B$ be a standard $T^k$-action. Then there exists a locally equivalent action $\pi' : M' \rightarrow B$ which admits a flat connection.
\end{lemma}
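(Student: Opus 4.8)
The plan is to exploit Proposition~\ref{prop:comparison}, which identifies the curvature difference of two connections with the Haefliger--Salem class, together with the affine classification of Proposition~\ref{prop:classification}. Concretely, I would fix a connection one-form $\Theta$ on $M$ and, using Lemma~\ref{lem:smoothcurv} together with a basic gauge correction, arrange that its curvature is smooth, say $d\Theta = \rho^*K$ with $K \in \Omega^2(B;\mathfrak{t}^k)$ closed. The whole argument then reduces to producing a locally equivalent $M'$ whose Haefliger--Salem class relative to $M$ reduces in real cohomology to $[K]$: if $\Theta'$ is any connection on such an $M'$, then by Proposition~\ref{prop:comparison} the smooth class $[d\Theta-d\Theta']\in H^2_{dR}(B;\mathfrak{t}^k)$ equals the real image of that Haefliger--Salem class, namely $[K]$, while $[d\Theta]=[K]$ by construction; hence $[d\Theta']=0$ in $H^2_{dR}(B;\mathfrak{t}^k)$. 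Because the curvature of any connection is cohomologically smooth (Lemma~\ref{lem:smoothcurv}), all its residues vanish, so in fact $[d\Theta']=0$ in $H^2(\A_{\partial B};\mathfrak{t}^k)$; its descended curvature is then exact, say $d\Theta'=d(\pi^*\beta)$ with $\beta\in\Omega^1(\A_{\partial B};\mathfrak{t}^k)$, and replacing $\Theta'$ by $\Theta'-\pi^*\beta$ (a basic modification, which preserves both connection axioms since $\iota_{a(v)}\pi^*\beta=0$) yields the desired flat connection.

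The one nontrivial input, and the main obstacle, is the existence of such an $M'$, which by Proposition~\ref{prop:classification} amounts to showing that $[K]$ lies in the image of the reduction map $\check{H}^2(B;\mathbb{Z}^k)\to H^2_{dR}(B;\mathfrak{t}^k)$, that is, that $[K]$ is \emph{integral}. I would establish this by restricting to the interior $B\setminus\partial B$, over which the action is free and $\pi$ is a genuine principal $T^k$-bundle with $\elli\cong TM$; there $\Theta$ restricts to an honest principal connection, so $[K|_{B\setminus\partial B}]$ is the real first Chern class of this bundle and hence lies in the image of $\check{H}^2(B\setminus\partial B;\mathbb{Z}^k)$. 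Since $B$ is a manifold with corners (Proposition~\ref{prop:quotientmap}), the inclusion $B\setminus\partial B\hookrightarrow B$ is a homotopy equivalence, so the integral lattices of $H^2(B)$ and $H^2(B\setminus\partial B)$ are identified, and integrality of $[K]$ over the interior forces integrality of $[K]$ on all of $B$.

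With integrality in hand, Proposition~\ref{prop:classification} supplies an $M'$, locally equivalent to $M$, whose Haefliger--Salem class relative to $M$ is any preferred integral lift $c\in\check{H}^2(B;\mathbb{Z}^k)$ of $[K]$, and the argument of the first paragraph then produces a flat connection on $M'$. I expect the only delicate bookkeeping to be checking that the integral lattice against which $[K]$ is measured in the interior argument coincides with the image of $\check{H}^2(B;\mathbb{Z}^k)$ appearing in Proposition~\ref{prop:comparison}; but since both the Chern-class integrality and Proposition~\ref{prop:comparison} use the same normalization $\iota_{a(v)}\Theta=v$ and the same exponential sequence with lattice $\mathbb{Z}^k$, these identifications agree and no separate computation is required.
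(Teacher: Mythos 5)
Your proof is correct and follows essentially the same route as the paper: fix a connection on $M$ with smooth, integral curvature class, use Proposition~\ref{prop:classification} to produce a locally equivalent $M'$ whose Haefliger--Salem class relative to $M$ is that class, and then apply Proposition~\ref{prop:comparison} to conclude that any connection $\Theta'$ on $M'$ satisfies $[d\Theta']=0$. You are in fact more thorough than the paper's own proof, which simply posits a connection on $M$ ``with integral curvature'' (your restriction-to-the-interior Chern-class argument together with the homotopy equivalence $B\setminus\partial B\hookrightarrow B$ supplies the missing justification) and stops at $[d\Theta']=0$, omitting the final basic-gauge correction $\Theta'\mapsto\Theta'-\pi'^*\beta$ that turns exact curvature into an actually flat connection.
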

\begin{proof}
Let $\Theta$ be a connection on $M$, with integral curvature. By Proposition \ref{prop:classification} the class $[d\Theta] \in H^2(B;\zz^k)$ corresponds to a locally equivalent torus action $M' \rightarrow B$. Given a connection $\Theta'$ on $M'$, we have by Proposition \ref{prop:comparison} that $[d\Theta - d\Theta']$ defines the Haefliger-Salem class of $M'$. But as this class is given by $[d\Theta]$, we conclude that $[d\Theta'] = 0$ which finishes the proof.
\end{proof}
The importance of this lemma is that in the classification of standard $T^k$-actions, it is now possible to pick as a reference an action $M_0 \rightarrow B$ with a flat connection. Using this reference, and a locally equivalent action $M' \rightarrow B$, we find that the Haefliger-Salem class of $M_0$ with respect to $M'$ is given exactly by $[d\Theta']$ for some connection $\Theta'$ on $M'$.
\begin{definition}
Let $M \rightarrow B$ be a standard $T^k$-action. The locally equivalent action $M_0 \rightarrow B$ which admits a flat connection is called the \textbf{reference action}.
\end{definition}
When $M \rightarrow B$ is a principal $T^k$-action, the reference action is simply the trivial bundle $B \times T^k$. Contrary to in the principal case, there might be several non locally equivalent standard $T^k$-actions over the same base with flat connections. We will see examples of this in Section \ref{sec:topchange}.

\section{The singular differential geometry of the correspondence space}\label{sec:singgeometry}
Let $M \rightarrow B,\hat{M} \rightarrow B$ be manifolds endowed with standard torus actions. The fibre-product $M \times_B \hat{M} \subset M\times \hat{M}$ fails to be a smooth submanifold. This space plays an important role in $T$-duality, and therefore we will shortly study notions for smooth objects on this space.

Let us start with the simplest example:
\begin{example}
Let $M,\hat{M}$ both denote $\cc$ endowed with the standard $S^1$-action, and $B = \rr_{\geq 0}$ the quotient. Then 
\begin{equation*}
M\times_B \hat{M} = \set{(z,\tilde{z}) : \abs{z} = \abs{\tilde{z}}},
\end{equation*}
and hence we may consider singular coordinates $(r,\theta,\hat{\theta}) \in \rr_{\geq 0} \times T^2$. These coordinates are well-defined except at $r = 0$. We see that $M\times_B \hat{M}$ is a cone over a torus.
\end{example}
The space $\cc \times_{\rr_{\geq 0}} \cc$ will serve as local model for the spaces we want to consider, so we will describe smoothness in terms of it:
\begin{definition}
A function $f : \cc \times_{\rr_{\geq 0}} \cc \rightarrow \rr$ is \textbf{smooth}, if it is the restriction of a smooth function $\tilde{f} \in C^{\infty}(\cc\times\cc)$.
\end{definition}

\begin{definition}
A map $\Phi : \cc \times_{\rr_{\geq 0}} \cc \rightarrow  \locm$ is said to be \textbf{smooth}, if for every $f \in C^{\infty}(\locm)$, $f\comp \Phi$ is smooth.
\end{definition}
Taking products, we can talk about smooth maps and diffeomorphisms of $(\cc \times_{\rr_{\geq 0}} \times \cc)^l \times \rr^k$, and charts and atlases of this form.
From the local form of standard $T^k$-actions it immediately follows that:
\begin{lemma}
Let $M,\hat{M}$ be $n$dimensional manifolds with standard $T^k$-actions over the same base $B$. Then $M\times_B\hat{M}$ admits an atlas of coordinate charts of the form $(\cc \times_{\rr_{\geq 0}} \times \cc)^l \times \rr^{n+k-3l}$.
\end{lemma}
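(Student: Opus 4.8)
The plan is to prove the statement locally over $B$, since possessing an atlas of a given type is a local condition: for each point of $M\times_B\hat{M}$ I will exhibit a neighbourhood modelled on a product of copies of $\locm$ with a Euclidean factor, and then verify that these patches assemble into a smooth atlas in the sense of the preceding definitions. Fix $p=(q,\hat{q})\in M\times_B\hat{M}$ and set $b=\pi(q)=\hat{\pi}(\hat{q})\in B$. The key preliminary observation is that, because $M$ and $\hat{M}$ have the \emph{same} quotient $B$, the local isotropy data of the two actions over $b$ must agree. Indeed, by Proposition \ref{prop:quotientmap} the space $B$ is a manifold with corners, and the standard normal form shows that a point lies over the interior of a codimension-$l$ corner of $B$ precisely when its isotropy is an $l$-dimensional subtorus. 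Since the corner codimension of $b$ is intrinsic to $B$, both actions have $l$-dimensional isotropy over $b$, and over a suitable neighbourhood $U\simeq\rr_{\geq 0}^{\,l}\times\rr^m$ of $b$ (with $l+m=\dim B$) both take the standard form
\begin{equation*}
\pi^{-1}(U)\simeq T^{k-l}\times\cc^l\times\rr^m,\qquad \hat{\pi}^{-1}(U)\simeq T^{k-l}\times\cc^l\times\rr^m,
\end{equation*}
with projections $(\varphi,z,x)\mapsto(|z_1|^2,\dots,|z_l|^2,x)$ and likewise for $\hat{\pi}$.

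Next I would compute the fibre product in these coordinates. Here there is one point requiring care: I must arrange that the two trivialisations induce the \emph{same} coordinates on $U$. Fixing boundary defining functions $t_1,\dots,t_l$ and interior coordinates on $U$, both $\pi^*t_i$ and $|z_i|^2$ generate the same local ideal cutting out the $i$-th stratum, so they differ by a positive invariant smooth unit $c_i$; replacing $z_i$ by $\sqrt{c_i}\,z_i$ (which preserves the standard rotation action, as $c_i$ is invariant) normalises $\pi^*t_i=|z_i|^2$, and similarly for $\hat{\pi}$, after which the $\rr^m$-coordinates can also be matched. With this normalisation the fibre-product condition $\pi=\hat{\pi}$ reads exactly $|z_i|=|\hat{z}_i|$ for $i=1,\dots,l$ together with $x=\hat{x}$, so that
\begin{equation*}
\pi^{-1}(U)\times_U\hat{\pi}^{-1}(U)\simeq T^{k-l}\times T^{k-l}\times\rr^m\times\prod_{i=1}^{l}\bigl\{(z_i,\hat{z}_i):|z_i|=|\hat{z}_i|\bigr\}=T^{k-l}\times T^{k-l}\times\rr^m\times(\locm)^l.
\end{equation*}
Passing to local coordinates on the torus factors produces a chart of the asserted product form, namely an open subset of $(\locm)^l\times\rr^{\,2(k-l)+m}$.

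Finally I would check that these patches form a smooth atlas. This is essentially automatic: $M\times_B\hat{M}$ sits inside the honest smooth manifold $M\times\hat{M}$ as a closed subset, and each patch above is the restriction to $M\times_B\hat{M}$ of a genuine coordinate chart of $M\times\hat{M}$ obtained from the diffeomorphisms $\pi^{-1}(U)\simeq T^{k-l}\times\cc^l\times\rr^m$. Hence any transition map between two such patches is the restriction of an honest smooth map on an open set of $\cc^{2l}\times\rr^{\,2(k-l)+m}$, which is precisely the defining condition for smoothness of maps between the model spaces $(\locm)^l\times\rr^N$. I expect the only genuine subtlety to be the coordinate reconciliation in the second step — bringing both quotient maps into simultaneous standard form relative to one fixed coordinate system on the base — while the matching of isotropy dimensions and the identification of each singular pair with $\locm$ follow directly from the standard local normal form, and the atlas compatibility is inherited from the ambient smooth structure on $M\times\hat{M}$.
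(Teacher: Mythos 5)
Your proof is correct and takes essentially the same route as the paper, which states the lemma as an immediate consequence of the local normal form of standard $T^k$-actions. Your write-up simply supplies the details the paper leaves implicit — matching isotropy dimensions via corner codimension, reconciling the two quotient charts over a common coordinate system on $B$, and inheriting atlas compatibility from the ambient smooth structure on $M\times\hat{M}$.
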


Let $p \in M\times_B \hat{M}$ be such that $\pi(p) \in \partial B[l]$ (that is, the isotropy of the $T^{2k}$-action at $p$ is $T^{2l}$). Such a point then has singular coordinates.
\begin{equation}\label{eq:coordinates}
(r_1,\ldots,r_l,\theta_1,\ldots,\theta_l,\hat{\theta}_1,\ldots,\hat{\theta}_l,\psi_1,\ldots,\psi_{k-l},\hat{\psi}_1,\ldots,\hat{\psi}_{k-l},x_1,\ldots,x_{n-k-l}).
\end{equation}
Here the $\set{r_i}$ are one set of radial coordinates on $\cc^l$, where $\set{\theta_i},\set{\hat{\theta}_i}$ are two sets of polar coordinates on $\cc^l$. The $\set{\psi_i}$ and $\set{\hat{\psi}_i}$ are honest coordinates on $T^{k-l}$ and the $\set{x_i}$ are coordinates on $\rr^{n-k-l}$.

\begin{remark}[Induced divisor]
Let $M,\hat{M}$ be manifolds endowed with standard $T^k$-actions and same base $B$ and let $I_{\abs{D}}$ and $I_{\abs{\hat{D}}}$ denote the corresponding elliptic divisors. Then the restriction of $I_{\abs{D}}$ and $I_{\abs{\hat{D}}}$ to $M\times_B \hat{M}$ coincides. In the coordinates of \eqref{eq:coordinates} the induced elliptic divisor on $M\times_B \hat{M}$ is given by $\inp{r_1,\ldots,r_l}$.
\end{remark}

Rather than trying to find a notion of tangent bundle for $M \times_B \hat{M}$, we use the ambient ellipitc tangent bundle on $M \times \hat{M}$ to define a Lie algebroid over $M \times_B \hat{M}$.

The first observation is that the way $M \times_B \hat{M}$ sits inside the smooth manifold $M \times \hat{M}$ is compatible with the Lie algebroid $\elli \boxtimes \ellihat$ in the following manner:
\begin{definition}
Let $\A \rightarrow M$ be a Lie algebroid and $N \subset M$ a subset, and let $\Gamma_N(\A)$ denote the sections of $\A$ tangent to $N$. Then $N$ is \textbf{adapted} to $\A$ if $\Sing(N) \subseteq \set{x\in M : \rho_x \text{ is not of full rank}}$ and the pull-back sheaf $\iota^*_N\Gamma_N(\A)$ is locally free over $\iota^*_NC^{\infty}(M)$.
\end{definition}
As $\iota^*_N\Gamma(\A)$ is locally free, we may view it as a topological vector bundle over $N$. By declaring sections to be smooth if they are restrictions of smooth sections of $\A$, we can view $\A$ as a smooth Lie algebroid over $N$. 
\begin{example}
A subspace $N \subset M$ is adapted to $TM$ if and only if it is an embedded submanifold. In this case the pull-back sheaf is simply given by vector fields on $N$.
\end{example}
The rationale behind this definition is as follows. We allow subspaces to have singularities, but they need to occur precisely when the anchor of $\A$ is not an isomorphism. 
\begin{example}
Let $M = \rr^2$, and let $\A$ be the Lie algebroid generated by $x\partial_x$ and $y\partial_y$. Let $N$ denote the union of the diagonal and anti-diagonal. If $X = fx\partial_x + gy\partial_y$ is tangent to the diagonal, it implies it must preserve the ideal generated by $x-y$. Therefore, $X = f(x\partial_x + y\partial_y)$. As these vector fields are also tangent to the anti-diagonal we conclude that $N$ is an $\A$-adepted subspace, and the restricted Lie algebroid is generated by $x\partial_x + y\partial_y$
\end{example}

From the local form of standard $T^k$-actions, we immediately obtain the following:
\begin{lemma}
Let $M,\hat{M}$ be standard $T^k$-actions over the same base $B$. Then $M \times_B \hat{M}$ is an $\elli \boxtimes \ellihat$-adapted subspace.
\end{lemma}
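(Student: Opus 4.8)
The plan is to verify the two conditions in the definition of an $\elli \boxtimes \ellihat$-adapted subspace directly from the local normal form, reducing the global statement to a computation in the model chart $(\cc \times_{\rr_{\geq 0}} \cc)^l \times \rr^k$. First I would recall that in this chart the ambient Lie algebroid $\elli \boxtimes \ellihat$ is generated by the elliptic vector fields on each $\cc$-factor together with the smooth vector fields on the $T^{n-l}$ and $\rr$-directions; explicitly, using the singular coordinates of \eqref{eq:coordinates}, the sections are spanned by $r_i \partial_{r_i}, \partial_{\theta_i}, \hat{r}_i \partial_{\hat{r}_i}, \partial_{\hat{\theta}_i}$ (one pair for each copy of $\cc$) together with $\partial_{\psi_j}, \partial_{x_j}, \partial_{\hat{x}_j}$. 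The subspace $M \times_B \hat{M}$ is cut out by $r_i = \hat{r}_i$ for all $i$, and its singular locus $\Sing(M \times_B \hat{M})$ is exactly $\set{r_1 = \cdots = r_l = 0}$.

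For the first condition, I must show $\Sing(M \times_B \hat{M})$ is contained in the locus where the anchor fails to be an isomorphism. This is immediate: the anchor of $\elli \boxtimes \ellihat$ drops rank precisely along the union of the vanishing loci $D$ and $\hat{D}$, i.e. where some $r_i = 0$ or some $\hat{r}_i = 0$, and since the singular points of the fibre product occur exactly where all paired radii vanish, they lie inside this degeneracy locus. For the second condition, I would compute $\Gamma_N(\elli \boxtimes \ellihat)$, the sections tangent to $N = M \times_B \hat{M}$. A section $X = \sum (a_i\, r_i\partial_{r_i} + b_i\,\partial_{\theta_i} + c_i\, \hat{r}_i\partial_{\hat{r}_i} + d_i\,\partial_{\hat{\theta}_i}) + (\text{smooth terms})$ is tangent to $N$ iff it preserves the ideal $\inp{r_i - \hat{r}_i}$; since $r_i\partial_{r_i}(r_i - \hat{r}_i) = r_i$ and $\hat{r}_i\partial_{\hat{r}_i}(r_i-\hat{r}_i) = -\hat{r}_i$, and on $N$ we have $r_i = \hat{r}_i$, tangency forces $a_i = c_i$ for each $i$ (the angular and transverse generators are automatically tangent). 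Thus $\Gamma_N(\elli\boxtimes\ellihat)$ is spanned over $C^\infty(M\times\hat{M})$ by $r_i\partial_{r_i} + \hat{r}_i\partial_{\hat{r}_i}, \partial_{\theta_i}, \partial_{\hat{\theta}_i}$ and the smooth directions. This parallels exactly the diagonal computation in the $\rr^2$ example preceding the statement.

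It then remains to check that the pull-back sheaf $\iota_N^* \Gamma_N(\elli\boxtimes\ellihat)$ is locally free over $\iota_N^* C^\infty(M\times\hat M)$. Here I would argue that the generators listed above restrict to $N$ as a frame of the expected rank $(3l + (n-l) + 2(n-l) + k)$ and that there are no further relations among their restrictions: a relation $\sum f_i\,\iota_N^*(r_i\partial_{r_i}+\hat r_i\partial_{\hat r_i}) + \cdots = 0$ would have to hold on the dense locus where all $r_i > 0$, where these restricted generators are linearly independent as genuine vector fields, and hence the coefficients vanish there and by continuity everywhere. The main obstacle I anticipate is being careful about the two potentially subtle points: first, that tangency must be checked against \emph{all} local generators of the ideal of $N$ (not just the $r_i - \hat r_i$) and that preserving this ideal really does pin down $a_i = c_i$ rather than merely a weaker condition — this is where the elliptic (rather than smooth) nature of the radial generators matters, since $\partial_{r_i}$ itself is not a section of $\elli$; and second, establishing local freeness cleanly at the singular points $r_i = 0$, where the naive frame does not obviously remain independent as set-theoretic vector fields. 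The resolution to the latter is that local freeness is a statement about the module of \emph{restricted Lie algebroid sections}, whose rank is constant by the normal form, so the Serre--Swan argument already invoked for the ambient algebroids applies verbatim once the generating set has been identified.
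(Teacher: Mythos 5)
Your proposal is correct and follows the same route the paper intends: the paper gives no written proof beyond declaring the lemma immediate from the local normal form, and your chart computation --- tangency to the fibre product forces the radial coefficients to pair up, yielding the free module generated by $r_i\partial_{r_i}+\hat{r}_i\partial_{\hat{r}_i}$, the angular fields and the smooth directions, with pointwise independence in the restricted bundle giving local freeness --- is precisely the diagonal computation of the paper's $\rr^2$ example carried over to the model chart. Two harmless slips worth fixing: $\Sing(M\times_B\hat{M})$ is the locus where \emph{some} pair $r_i=\hat{r}_i$ vanishes, not where all of them do, and the vanishing ideal of the fibre product should be described by the smooth generators $r_i^2-\hat{r}_i^2$ (not $r_i-\hat{r}_i$) together with $x_j-\hat{x}_j$, so the base directions contribute diagonal generators $\partial_{x_j}+\partial_{\hat{x}_j}$ exactly as the radial ones do; neither point affects the containment in the anchor-degeneracy locus nor the local-freeness conclusion.
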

We denote the induced Lie algebroid on $M \times_B \hat{M}$ by $\elli \times_B \ellihat$.


\section{Non prinicipal $T$-duality}\label{sec:Tduality}
Using the Lie algebroids from Section \ref{sec:backgroud} we will extend the notions of $T$-duality to standard torus actions. To proof the existence of $T$-duals we will use the classification of torus actions, and the connection point of view thereof, as described in Section \ref{sec:classification}. Moreover, we will stress the differences with $T$-duality for principal torus bundles; in particular the new source of topology change arising from the singularities of the torus action.
\subsection{$T$-duality}
In this subsection we will recall the notions of $T$-duality for principal torus actions. We will be brief and refer to \cite{BEM04} and \cite{CG11} for more details.
\begin{definition}
Let $M$ and $\hat{M}$ be principal $T^k$-bundles with base $B$ and let $H \in \Omega^3_{T^k}(M),\hat{H} \in \Omega^3_{T^k}(\hat{M})$ be $T^k$-invariant closed three-forms. Consider the fibre-product, $M\times_B \hat{M}$, and the following diagram:
\begin{center}
\begin{tikzcd}
& (M\times_B \hat{M},p^*H-\hat{p}^*\hat{H}) \ar[ld,"p"],\arrow{rd}[swap]{\hat{p}} &\\
(M,H) \ar[rd,"\pi"] & & (\hat{M},\hat{H}) \arrow{ld}[swap]{\hat{\pi}}\\
& B & 
\end{tikzcd}
\end{center}
We say that $(M,H)$ and $(\hat{M},\hat{H})$ are \textbf{$T$-dual} if there exists a $T^{2k}$-invariant two-form $F \in \Omega^2_{T^{2k}}(M\times_B\hat{M})$ such that
\begin{equation*}
dF = p^*H - \hat{p}^*\hat{H},
\end{equation*}
and the restriction
\begin{equation*}
F : \mathfrak{t}_M \times \mathfrak{t}_{\hat{M}} \rightarrow \rr,
\end{equation*}
where $\mathfrak{t}_M,\mathfrak{t}_{\hat{M}}$ denotes the tangent space to the fibres of $p$, $\hat{p}$ respectively, is non-degenerate. We call $M \times_B \hat{M}$ the \textbf{correspondence space}. 
\end{definition}
$T$-duals always exist, provided the three-form is integral:
\begin{theorem}[\cite{BEM04}]
Let $M \rightarrow B$ be a principal $T^k$-bundle, and $H \in \Omega^3_{T^k}(M)$ with integral cohomology class. Then there exists a principal $T^k$-bundle, $\hat{M}$ $T$-dual to $M$.
\end{theorem}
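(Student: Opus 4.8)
The plan is to prove existence of a $T$-dual by the standard constructive argument for principal bundles, reducing everything to a bookkeeping statement about the integral cohomology class $[H]$. First I would recall the structure of $H^3_{T^k}(M)$ for a principal $T^k$-bundle. Since $H$ is $T^k$-invariant and closed, I would fix a connection, i.e.\ a connection one-form $\Theta \in \Omega^1(M;\mathfrak{t}^k)$ with curvature $F = d\Theta \in \Omega^2(B;\mathfrak{t}^k)$ representing the Chern class, and use it to split $H$ according to how many ``vertical'' directions it contains. Concretely, for a $T^1$-bundle one writes $H = \Theta \wedge F' + H_B$ where $F' \in \Omega^2(B)$ is a basic closed form and $H_B \in \Omega^3(B)$ is basic; the integrality of $[H]$ forces $[F']$ to be an integral class. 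This is the key decomposition: the fibre-integral of $H$ over the circle recovers a curvature class for the dual bundle.

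The heart of the construction is then to \emph{swap the roles of $F$ and $F'$}. I would \emph{define} the dual bundle $\hat{M} \to B$ to be the principal $T^k$-bundle whose Chern class is $[F'] \in H^2(B;\zz^k)$ (this is where integrality is essential: $F'$ must define an honest integral class so that a bundle with that Chern class exists). Choosing a connection $\hat{\Theta}$ on $\hat{M}$ with curvature $\hat{F} = d\hat{\Theta}$ cohomologous to $F'$, I would define the dual three-form $\hat{H} := \hat{\Theta}\wedge F + H_B$ (in the $T^1$ case; for higher rank one contracts the matrix of curvatures appropriately). One checks $\hat{H}$ is closed and $T^k$-invariant. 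The remaining step is to verify the $T$-duality condition on the correspondence space $M \times_B \hat{M}$: I would write down the explicit two-form $F := \Theta \wedge \hat{\Theta}$ (again, the pairing of the two connection forms, suitably interpreted as a $\mathfrak{t}^k$-valued expression contracted into a bilinear form) and compute $dF = p^*H - \hat{p}^*\hat{H}$ directly using the two decompositions, so that the $H_B$ terms cancel and the mixed curvature terms match. Nondegeneracy of $F$ restricted to $\mathfrak{t}_M \times \mathfrak{t}_{\hat{M}}$ is immediate from the fact that $\Theta$ and $\hat{\Theta}$ each restrict to the Maurer--Cartan form on their respective fibres, so the pairing $\mathfrak{t}^k \times \mathfrak{t}^k \to \rr$ it induces is the standard identification.

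The main obstacle, and the one step requiring genuine care rather than formula-pushing, is the integrality argument: one must show that the fibrewise-integrated component $[F']$ of $[H]$ lands in the image of $H^2(B;\zz^k) \to H^2(B;\rr^k)$, so that a principal bundle realizing it actually exists. This is exactly where the hypothesis that $[H]$ is an integral class is consumed, and it is the reason the theorem can fail without integrality. I would establish this by choosing $H$ to be an integral representative and invoking the Gysin/Leray--Hirsch description of $H^3_{T^k}(M)$, in which the component of $[H]$ obtained by integration over the torus fibre is manifestly integral because it is a push-forward of an integral class along the oriented fibre. Once this is in hand, the existence of $\hat{M}$ is immediate, and the remaining verifications ($d\hat{H} = 0$, the computation of $dF$, and nondegeneracy) are routine local computations in the connection coordinates, so I would state them and leave the details.
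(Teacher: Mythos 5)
Your proposal is correct and is essentially the approach the paper itself relies on: the statement is quoted from \cite{BEM04} without proof, but the paper's proof of its non-principal generalization, Theorem \ref{prop:Texistence}, is exactly your construction --- split $H$ using a connection $\Theta$, let the fibre-integrated (hence integral, by your Gysin argument) component define the Chern class of $\hat{M}$, set $\hat{H} = \inp{d\Theta,\hat{\Theta}} + h$, and exhibit $F = -\inp{\Theta,\hat{\Theta}}$ on the correspondence space. One small tightening: you must choose $\hat{\Theta}$ so that $d\hat{\Theta}$ equals the fibre-integrated component $F'$ exactly, not merely cohomologously (always possible by adding the pullback of a basic one-form to $\hat{\Theta}$), since otherwise the mixed curvature terms in $d\bigl(\Theta\wedge\hat{\Theta}\bigr)$ do not cancel against $p^*H - \hat{p}^*\hat{H}$.
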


$T$-dual spaces have isomorphic complexes of invariant differential forms:
\begin{theorem}[\cite{BEM04}]
Let $(M,H)$ and $(\hat{M},\hat{H})$ be $T$-dual with $dF = p^*H -\tilde{p}^*\hat{H}$. Then 
\begin{equation*}
\tau :(\Omega^{\bullet}_{T^k}(M),d_H) \rightarrow (\Omega^{\bullet}_{T^k}(\hat{M}),d_{\hat{H}}), \quad \rho \mapsto \int_{T^k}e^F \wedge p^*\rho,
\end{equation*}
is an isomorphism of cochain complexes. Here the integral is over the fibres of $\tilde{p}$.
\end{theorem}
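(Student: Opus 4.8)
The plan is to exhibit $\tau$ explicitly as a fibre integration, verify it is a cochain map, and then produce an inverse by a second fibre integration against $e^{-F}$, reducing invertibility to a fibrewise linear-algebra statement.

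First I would make the formula precise. Writing $\hat p_*$ for integration over the fibres of $\hat p$ (which, since $M\times_B\hat M\to\hat M$ is a principal $T^k$-bundle with compact fibre, sends $T^{2k}$-invariant forms to $T^k$-invariant forms and lowers degree by $k$), the map is $\tau(\rho)=\hat p_*(e^F\wedge p^*\rho)$. The chain-map property $\tau\circ d_H=d_{\hat H}\circ\tau$ then follows from three ingredients: (i) $d(e^F)=dF\wedge e^F$ together with $dF=p^*H-\hat p^*\hat H$; (ii) the commutation $d\circ\hat p_*=\hat p_*\circ d$ (Stokes with closed torus fibres, so no boundary term); and (iii) the projection formula $\hat p_*(\hat p^*\hat H\wedge\alpha)=\hat H\wedge\hat p_*\alpha$. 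Expanding $d(\tau\rho)=\hat p_*(dF\wedge e^F\wedge p^*\rho)+\hat p_*(e^F\wedge p^*d\rho)$ and splitting $dF$ produces exactly $\hat p_*(e^F\wedge p^*(H\wedge\rho))\pm\hat H\wedge\tau\rho+\hat p_*(e^F\wedge p^*d\rho)$, which matches $d_{\hat H}(\tau\rho)$ once the sign conventions for $d_H$ and $d_{\hat H}$ are fixed compatibly (I would use $d_H=d+H\wedge$).

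Next I would record that $\tau$ is a morphism of $\Omega^\bullet(B)$-modules: since $\pi\circ p=\hat\pi\circ\hat p$, any form pulled back from $B$ commutes through $\hat p_*$, giving $\tau(\pi^*\beta\wedge\rho)=\hat\pi^*\beta\wedge\tau(\rho)$. This lets me check that $\tau$ is an isomorphism locally over $B$ and fibrewise. Choosing connection one-forms on $M$ and $\hat M$, I can split the invariant forms according to their degree along the fibre directions $\theta_1,\dots,\theta_k$ and $\hat\theta_1,\dots,\hat\theta_k$; the nondegeneracy of $F$ restricted to $\mathfrak t_M\times\mathfrak t_{\hat M}$ lets me normalise the mixed part of $F$ to $\sum_i\theta_i\wedge\hat\theta_i$ modulo forms pulled back from $B$. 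Since $p^*\rho$ involves only $\theta$ and base directions, integrating $e^{\sum_i\theta_i\wedge\hat\theta_i}\wedge p^*\rho$ over the $\theta$-torus extracts the coefficient of $\theta_1\wedge\cdots\wedge\theta_k$ and converts each missing $\theta_i$ into a factor $\hat\theta_i$; concretely $\tau$ sends $\rho_S\wedge\theta_S\mapsto\pm\,\rho_S\wedge\hat\theta_{S^c}$, where $S^c$ denotes the complementary index set.

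The main obstacle is establishing that $\tau$ is invertible, and the computation above is exactly what makes this transparent: the assignment $\theta_S\mapsto\pm\hat\theta_{S^c}$ is a bijection between the natural bases of the two exterior algebras precisely because the pairing induced by $F$ on the fibre directions is perfect. I would turn this into an honest inverse by setting $\hat\tau(\sigma)=\pm\,p_*(e^{-F}\wedge\hat p^*\sigma)$ and checking $\hat\tau\circ\tau=\mathrm{id}$ and $\tau\circ\hat\tau=\mathrm{id}$; by $\Omega^\bullet(B)$-linearity this reduces to the fibrewise identity that the two exterior-algebra Fourier transforms (against $e^F$ and $e^{-F}$) are mutually inverse, up to a sign bookkeeping that I would track carefully. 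Everything else — smoothness, $T^k$-invariance of the output, and the degree shift — is then routine.
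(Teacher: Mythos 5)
The paper offers no proof of this statement: it is recalled from \cite{BEM04} as background, so there is no internal argument to compare against. Your proposal is, in essence, the standard proof from that literature (also reproduced in \cite{CG11}): the chain-map property via $d(e^F)=dF\wedge e^F$, Stokes over the closed torus fibres, and the projection formula; then invertibility by splitting invariant forms with connection one-forms and recognising the fibrewise exterior-algebra Fourier transform, with inverse $\pm\, p_*(e^{-F}\wedge\hat p^*\sigma)$. One step is stated too loosely: nondegeneracy only controls the mixed part of $F$, but after choosing connections $F$ may also contain terms $\alpha_i\wedge\theta_i$, $c_{ij}\theta_i\wedge\theta_j$ and their hatted analogues, which are neither mixed nor pulled back from $B$, so the clean formula $\rho_S\wedge\theta_S\mapsto\pm\rho_S\wedge\hat\theta_{S^c}$ does not hold verbatim for general $F$. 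The repair is easy and within the spirit of your argument: those extra terms are pullbacks under $p$ (respectively $\hat p$), hence $e^{F}$ factors as a $p$-pullback times the mixed exponential times a $\hat p$-pullback; the $\hat p$-pullback factor slides out of $\hat p_*$ by the projection formula and the $p$-pullback factor is absorbed into $p^*\rho$, so $\tau$ differs from the purely mixed transform by pre- and post-composition with invertible multiplications (note also that closedness of $dF$ in mixed degree forces the pairing matrix to be constant). With that repair, and the sign bookkeeping you already flag, your proof is complete and matches the cited one.
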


\subsection{Courant algebroids}
$T$-duality induces an isomorphism of invariant sections of the standard Courant algebroid:
\begin{theorem}[\cite{CG11}]\label{th:courantiso}
If $(M,H)$ and $(\hat{M},\hat{H})$ are $T$-dual, then there is an isomorphism of Courant algebroids
\begin{equation*}
\varphi : (TM \oplus T^*M)/T^k \rightarrow (T\hat{M} \oplus T^*\hat{M})/T^k,
\end{equation*}
such that $\tau(v\cdot \rho) = \varphi(v) \cdot \tau(\rho)$, for all $v\in (TM \oplus T^*M)/T^k$ and $\rho \in \Omega^{\bullet}(M)$.
\end{theorem}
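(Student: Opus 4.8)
The plan is to realize both reduced Courant algebroids over $B$ through the correspondence space $C := M\times_B\hat{M}$ and to transport sections using the $B$-field gauge transformation by $F$. Writing $H_C := p^*H$ and $\hat{H}_C := \hat{p}^*\hat{H}$, the defining relation $dF = H_C - \hat{H}_C$ says precisely that the $B$-transform by $F$ is an isomorphism of Courant algebroids $e^F\colon \mathbb{T}_{H_C}C \to \mathbb{T}_{\hat{H}_C}C$, since a $B$-transform shifts the Severa twist by $dF$. Both $H_C,\hat H_C$ and $F$ are invariant under the full $T^{2k}=T^k\times T^k$ action on $C$, so $e^F$ restricts to invariant sections. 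First I would fix invariant principal connections $\theta$ on $M$ and $\hat\theta$ on $\hat{M}$, pull them back to $C$, and use the non-degeneracy of $F$ on the fibre directions to normalize $F$ so that its mixed component is $\sum_i\theta_i\wedge\hat\theta_i$ and its pure-fibre components vanish; decomposing $dF=H_C-\hat H_C$ by fibre degree then yields the standard $T$-duality relations (the curvature of $\hat{M}$ is the fibrewise integral of $H$, and symmetrically), which I will need for projectability below.

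Next I would define $\varphi$ explicitly on invariant sections. Given an invariant $v=X+\xi\in\Gamma(TM\oplus T^*M)^{T^k}$, lift it to an invariant section $\tilde v=\tilde X+p^*\xi$ of $\mathbb{T}_{H_C}C$, apply the gauge transformation to obtain $e^F\tilde v=\tilde X+p^*\xi+\iota_{\tilde X}F$, and declare $\varphi(v)$ to be the invariant section $\hat X+\hat\xi$ of $\mathbb{T}_{\hat H}\hat{M}$ to which $e^F\tilde v$ is $\hat p$-related. The only freedom in $\tilde X$ is its $\ker dp$-component (the $\hat{M}$-torus directions), and the requirement that the covector part $p^*\xi+\iota_{\tilde X}F$ annihilate $\ker d\hat p$ pins this component down uniquely, precisely because $F$ pairs $\ker dp$ with $\ker d\hat p$ non-degenerately. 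This is the exchange of winding and momentum: the vertical covector coefficients of $\xi$ become the vertical vector coefficients of $\hat X$, and dually the vertical vector coefficients of $X$ become vertical covector coefficients of $\hat\xi$. The $T$-duality relations ensure the horizontal parts descend and that $\hat X$ is genuinely $\hat p$-projectable. Non-degeneracy of $F$ on the fibres makes this assignment bijective, with inverse obtained by running the construction with $e^{-F}$ from $\hat{M}$ to $M$; hence $\varphi$ is an isomorphism of vector bundles over $B$ covering the identity.

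The decisive step is to verify the intertwining relation $\tau(v\cdot\rho)=\varphi(v)\cdot\tau(\rho)$, from which everything else follows formally. Here I would compute directly on $C$: the spin representation is compatible with $B$-transforms, in the sense that $(e^F\tilde v)\cdot(e^F\wedge p^*\rho)=e^F\wedge(\tilde v\cdot p^*\rho)$, and fibre integration along $\hat p$ obeys the projection formula together with its contraction analogue, so that contracting and wedging by $e^F\tilde v$ before integrating equals contracting and wedging by the $\hat p$-related section $\varphi(v)$ after integrating. Combining these with $\tau(\rho)=\int_{T^k}e^F\wedge p^*\rho$ yields the claimed identity. Once this is in place, I would invoke the preceding theorem that $\tau$ intertwines the twisted differentials $d_H$ and $d_{\hat H}$ and use the derived-bracket description of the Dorfman bracket, $\Cour{u,v}_H\cdot\rho=[[d_H,u\cdot],v\cdot]\rho$, where the inner brackets are graded commutators of operators on $\Omega^\bullet$. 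Applying $\tau$ and using the two intertwinings gives $\varphi(\Cour{u,v}_H)\cdot\tau\rho=\Cour{\varphi(u),\varphi(v)}_{\hat H}\cdot\tau\rho$, and faithfulness of the Clifford action forces $\varphi$ to preserve the bracket; the analogous computation with the anticommutator $\{u\cdot,v\cdot\}=2\inp{u,v}$ shows $\varphi$ preserves the pairing.

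I expect the main obstacle to be the intertwining relation together with the clean definition of $\varphi$: one must choose the lift $\tilde X$ and verify $\hat p$-projectability using the $T$-duality relations, and must be careful that the contraction part of the Clifford action behaves correctly under fibre integration (a contraction-version of the projection formula, in which $\hat p$-vertical contractions interact with the normalized fibre component $\sum_i\theta_i\wedge\hat\theta_i$ of $F$). The remaining verifications — bijectivity from non-degeneracy of $F$, and preservation of bracket and pairing — are then formal consequences of the intertwining relation and the cochain isomorphism already recorded.
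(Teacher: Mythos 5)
The paper supplies no proof of this statement—it is quoted directly from \cite{CG11}—so the only meaningful comparison is with the argument of Cavalcanti--Gualtieri that the citation points to, and your proposal is essentially that argument: pass to the correspondence space, use the gauge transform by $F$, define $\varphi$ on an invariant section $X+\xi$ via the unique lift $\tilde X$ (pinned down by non-degeneracy of $F$ on the fibre directions) making the covector part $\hat p$-basic, establish the Clifford intertwining $\tau(v\cdot\rho)=\varphi(v)\cdot\tau(\rho)$ by combining the spinorial $B$-transform identity with the projection formula for fibre integration, and then deduce preservation of the bracket and pairing from the derived-bracket formula $\Cour{u,v}_H\cdot\rho=[[d_H,u\cdot],v\cdot]\rho$, the anticommutator identity, and faithfulness of the Clifford action. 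The one point to fix in a full write-up is a sign convention: with the usual spinorial convention the transform compatible with wedging by $e^F$ is $X+\xi\mapsto X+\xi-\iota_XF$, not $+\iota_XF$ as you wrote, and it is the compensating sign arising when fibre integration is moved past an odd Clifford operator that makes your $+\iota_{\tilde X}F$ definition of $\varphi$ the one that actually satisfies the intertwining (as a check on $M=\hat M=S^1\times B$, $F=\theta\wedge\hat\theta$ confirms: $\varphi(\partial_\theta)=+\hat\theta$); this is exactly the bookkeeping you flagged as the main obstacle, and it self-corrects rather than breaking the proof.
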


The strength of this description is that it immediately allows one to transport structures:
\begin{theorem}[\cite{CG11}]
Let $(M,H)$ and $(\hat{M},\hat{H})$ be $T$-dual spaces. Then any Dirac, generalized
complex, generalized K\"ahler or SKT structure on $M$ which is invariant under the torus
action is transformed via $\varphi$ into a structure of the same kind on $\hat{M}$.
\end{theorem}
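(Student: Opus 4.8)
\emph{Strategy.} The common thread is that each of the four structures is specified entirely by the Courant algebroid data on $\mathbb{T}M = TM\oplus T^*M$ --- the symmetric pairing $\inp{\cdot,\cdot}$, the Courant--Dorfman bracket $\Cour{\cdot,\cdot}_H$, and the anchor --- subject to pointwise algebraic conditions and differential (integrability) conditions. Since Theorem \ref{th:courantiso} furnishes an isomorphism $\varphi$ of the reduced Courant algebroids $(TM\oplus T^*M)/T^k$ and $(T\hat{M}\oplus T^*\hat{M})/T^k$ over $B$, and since invariant structures on $\mathbb{T}M$ are exactly sections of this reduced bundle, the plan is to transport each structure by taking its image (or conjugate) under $\varphi$ and to verify that the defining conditions persist: the pointwise conditions because $\varphi$ is a fibrewise orthogonal bundle isomorphism, and the differential conditions because $\varphi$ intertwines the two brackets.

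\emph{Case by case.} For a Dirac structure I would set $\hat{L}=\varphi(L)$; as $\varphi$ preserves the pairing it carries a maximal isotropic subbundle to a maximal isotropic subbundle, and as it intertwines the brackets it carries an involutive subbundle to an involutive one, so $\hat{L}$ is again Dirac. For a generalized complex structure $\mathbb{J}$ I would set $\hat{\mathbb{J}}=\varphi\comp \mathbb{J}\comp\varphi^{-1}$: orthogonality and $\hat{\mathbb{J}}^2=-1$ are immediate from $\varphi$ being a fibrewise isometry, while the $+i$-eigenbundle of $\hat{\mathbb{J}}$ is $\varphi(L)$, involutive by the Dirac case. Alternatively, transporting the canonical pure-spinor line $K$ by $\tau$ and using $\tau(v\cdot\rho)=\varphi(v)\cdot\tau(\rho)$ shows $\varphi(v)\cdot\tau(K)=\tau(v\cdot K)=0$ for $v\in L$, identifying $\tau(K)$ as the canonical bundle of $\hat{\mathbb{J}}$. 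For a generalized K\"ahler structure $(\mathbb{J}_1,\mathbb{J}_2)$ I would transport both factors: commutativity is preserved by conjugation, each conjugate is generalized complex by the previous case, and positivity of the induced metric $-\hat{\mathbb{J}}_1\hat{\mathbb{J}}_2=\varphi(-\mathbb{J}_1\mathbb{J}_2)\varphi^{-1}$ survives because $\varphi$ preserves $\inp{\cdot,\cdot}$. For an SKT structure, encoded by a generalized metric (an orthogonal self-adjoint involution with positive-definite associated quadratic form) together with a bracket-theoretic integrability condition, the same two observations --- $\varphi$ preserves positivity and intertwines brackets --- show that the transported data is again SKT.

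\emph{Descent and invariance.} Because $\varphi$ is an isomorphism of the reduced Courant algebroids over $B$, the transported objects are automatically $T^k$-invariant, and they are smooth since $\varphi$ is a smooth bundle isomorphism; unwinding the identification between invariant sections of $\mathbb{T}\hat{M}$ and sections of the reduced bundle then produces honest invariant structures on $\hat{M}$. The intertwining $\tau(v\cdot\rho)=\varphi(v)\cdot\tau(\rho)$ also guarantees that the backgrounds match across $dF=p^*H-\hat{p}^*\hat{H}$, so the transported structure is correctly twisted by $\hat{H}$.

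\emph{Main obstacle.} None of the steps is computationally heavy; the entire content is that $\varphi$ is a genuine isomorphism of Courant algebroids, which is precisely Theorem \ref{th:courantiso}. The one point requiring care is the SKT case, where one must first recall a purely Courant-algebroid characterization of the SKT condition (as in \cite{CG11}) before the transport argument applies verbatim; isolating that characterization, rather than any subsequent computation, is where the real work lies.
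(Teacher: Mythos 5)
This statement is recalled from \cite{CG11} without proof in the paper, and your argument is precisely the one used there: every structure in the list is defined purely in terms of the Courant algebroid data (pairing, bracket, anchor), invariant structures correspond to sections of the reduced algebroids, and the isomorphism $\varphi$ of Theorem \ref{th:courantiso} transports them while preserving all pointwise and integrability conditions. Your proposal is correct, including the correct identification of the only delicate point --- recasting the SKT condition in Courant-algebroid terms --- so there is nothing to add.
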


We are now ready to introduce the notion of $T$-duality for standard torus actions:
\begin{definition}\label{def:t-duality general}
Let $M,\hat{M}$ be two manifolds endowed with standard $T^k$-actions over the same base $B$ and let $I_{\abs{D}},I_{\abs{\hat{D}}}$ be the induced elliptic divisors. Furthermore, suppose we are given $H \in \Omega^3_{\rm cl}(\ellio), \hat{H} \in \Omega^3_{\rm cl}(\ellit)$ closed and $T^k$-invariant. We consider the following diagram:
\begin{center}
\begin{tikzcd}
& (\ellio \times_B\ellit \rightarrow  M\times_B \hat{M},p^*H-\hat{p}^*\hat{H}) \ar[ld,"p"]\arrow{rd}[swap]{\hat{p}} &\\
(\ellio \rightarrow M,H) \arrow{rd}{\pi} & & (\ellit \rightarrow \hat{M},\hat{H}) \arrow{ld}[swap]{\hat{\pi}}\\
& \A_{\partial B} \rightarrow B & 
\end{tikzcd}
\end{center}
We say that $(M,H)$ and $(\hat{M},\hat{H})$ are \textbf{$T$-dual} if there exists a $T^{2k}$-invariant two-form $F \in \Omega^2(\ellio \times_B \ellit)$ such that $p^*H-\hat{p}^*\hat{H} = dF$ and
\begin{equation}
F : \mathfrak{t}_{M} \otimes \mathfrak{t}_{\hat{M}} \rightarrow \rr,
\end{equation}
is non-degenerate.
\end{definition}
Let us expand on this definition. First $\ellio\times_B \ellit$ is the Lie algebroid induced by viewing $M \times_B \hat{M}$ as an $(\ellio\boxtimes \ellit)$-adapted subspace of $M \times \hat{M}$. The two-form $F$ is a form for this Lie algebroid, and can in practice be thought of as the restriction of a two-form $\tilde{F} \in \Omega^2(\ellio\boxtimes \ellit)$. Here $\mathfrak{t}_{M_i}$ denotes the kernel of the Lie algebroid submersion $\ellio\times_B \ellit \rightarrow \mathcal{A}_{\abs{D_i}}$ induced by $p_i$.

\begin{example}
Consdier $\cc$ with the standard $S^1$-action, and $H = 0$. Then $F = d\theta_1 \wedge d\theta_2$ provides a $T$-dual from $(\cc,0)$ to itself.
\end{example}
A connection one-form $\Theta \in \Omega^1(\elli;\mathfrak{t})$ induces an isomorphism $\elli/S^1 \simeq \mathfrak{t} \oplus \A_{\partial B}$, therefore we obtain a decomposition
\begin{equation*}
\Omega^3_{T^k}(\elli) = \sum_{i=0}^3\Omega^i(\A_{\partial B},\wedge^{3-i}\mathfrak{t}^*).
\end{equation*}
The $T$-duality conditions forces the component of $H$ in $\Omega^0(\A_{\partial B};\wedge^3 t^*)$ to vanish, and the component in $\Omega^1(\A_{\partial B};\wedge^2 t^*)$ to be exact. When considering geometric structures (up to gauge equivalence), it is the cohomology class $[H]$, rather than the form itself, which is of importance. Hence we can impose that we consider $H$ which only have two terms in this decomposition:
\begin{equation}\label{eq:integrality}
H = \inp{\hat{c},\Theta} + h,
\end{equation}
for some $\hat{c} \in \Omega^2(\A_{\partial B};\mathfrak{t}^*)$ and $h \in \Omega^3(\A_{\partial B})$.
Using this decomposition, and the relation of the curvature with the Haefliger-Salem class, we can now address the question of existence of $T$-duals.
\begin{theorem}\label{prop:Texistence}
Let $M \rightarrow B$ be a manifold endowed with a standard $T^k$ and an invariant closed three-form $H \in \Omega^3(\elli)$. If $[\hat{c}]$ as in Equation \eqref{eq:integrality} defines a class in $H^2(B;\mathbb{Z}^k)$, then there exists a $T$-dual $\hat{M} \rightarrow B$.
\end{theorem}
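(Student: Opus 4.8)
The plan is to mirror the Bouwknegt--Evslin--Mathai construction from the principal case, replacing the Chern class with the Haefliger--Salem class and ordinary connections by the elliptic connections developed in Section~\ref{sec:classification}. Fix a connection one-form $\Theta \in \Omega^1(\elli;\mathfrak{t}^k)$ with integral curvature, so that $c := [d\Theta] \in H^2(B;\zz^k)$ is the Haefliger--Salem class of $M$ relative to the reference action $M_0 \to B$, and write $H = \inp{\hat{c},\Theta} + h$ as in Equation~\eqref{eq:integrality}. Expanding $dH = 0$ in this decomposition shows along the way that $\hat{c}$ is closed and that $dh = \inp{\hat{c},d\Theta}$, a relation I will reuse below. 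The starting observation is that the hypothesis ``$[\hat{c}] \in H^2(B;\zz^k)$'' is exactly what is needed to run the classification backwards: by Proposition~\ref{prop:classification} the integral class $[\hat{c}]$ determines, relative to $M_0$, a manifold $\hat{M} \to B$ carrying a locally equivalent standard $T^k$-action, and this $\hat{M}$ is the candidate $T$-dual.

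Next I would equip $\hat{M}$ with a connection one-form $\hat{\Theta} \in \Omega^1(\ellihat;(\mathfrak{t}^k)^*)$ whose curvature represents $\hat{c}$. This is where the machinery of Section~\ref{sec:classification} does the work: by Lemma~\ref{lem:smoothcurv} the curvature $[d\hat{\Theta}]$ is cohomologically smooth, and by Proposition~\ref{prop:comparison} (together with the choice of $M_0$ as reference) the Haefliger--Salem class of $\hat{M}$ is represented by $[d\hat{\Theta}]$, which by construction equals $[\hat{c}]$. Adjusting $\hat{\Theta}$ by the pullback of a smooth basic one-form I may arrange $d\hat{\Theta} = \hat{c}$ on the nose. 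I then define the dual flux $\hat{H} := \inp{c,\hat{\Theta}} + h \in \Omega^3(\ellihat)$, using a smooth representative of $c$, and check that it is closed and $T^k$-invariant; closedness is a direct computation, since $dc=0$, $d\hat{\Theta}=\hat{c}$ and the relation $dh=\inp{\hat{c},c}$ obtained from $dH=0$ together give $d\hat{H}=0$, exactly as in the principal case.

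The heart of the argument is the construction of the interpolating two-form on the correspondence space. I would set
\begin{equation*}
F := \inp{p^*\Theta \wedge \hat{p}^*\hat{\Theta}} \in \Omega^2(\elli \times_B \ellihat),
\end{equation*}
the pairing of the two connection forms, regarded as a form on the $(\elli \boxtimes \ellihat)$-adapted subspace $M \times_B \hat{M}$ from Section~\ref{sec:singgeometry}. The computation $dF = \inp{p^*d\Theta \wedge \hat{p}^*\hat{\Theta}} - \inp{p^*\Theta \wedge \hat{p}^*d\hat{\Theta}}$, combined with $d\Theta$ descending to $c$ and $d\hat{\Theta}$ descending to $\hat{c}$, reproduces $p^*H - \hat{p}^*\hat{H}$ (with the sign fixed by convention); the $h$-terms appear identically in $p^*H$ and $\hat{p}^*\hat{H}$ and cancel. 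Any residual discrepancy, coming from $d\Theta$, $d\hat{\Theta}$ agreeing with the basic representatives $c$, $\hat{c}$ only up to exact basic corrections, is absorbed by modifying $F$ with the pullback of a basic one-form. Non-degeneracy of $F\colon \mathfrak{t}_M \otimes \mathfrak{t}_{\hat{M}} \to \rr$ is then immediate from the defining property $\iota_{a(v)}\Theta = v$ of a connection one-form: on the fibre directions $F$ restricts to the canonical pairing between $\mathfrak{t}^k$ and $(\mathfrak{t}^k)^*$, and the basic corrections annihilate these directions, so non-degeneracy is preserved.

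The main obstacle I anticipate is verifying the identity $dF = p^*H - \hat{p}^*\hat{H}$ genuinely as elliptic Lie algebroid forms over the \emph{singular} fibre product $M \times_B \hat{M}$, rather than merely over its free part. This requires knowing that $d\Theta$, $d\hat{\Theta}$ and their pullbacks extend across the singular strata as smooth $\elli \times_B \ellihat$-forms, and that the local model $\locm$ does not obstruct either the two-form $F$ or the matching of curvature representatives; here the smoothness of the curvature from Lemma~\ref{lem:smoothcurv} and the adapted-subspace formalism are essential. A secondary point requiring care is the bookkeeping of the exact correction terms: I must ensure that absorbing them into $F$ keeps $F$ invariant under the full $T^{2k}$-action, which is automatic because the corrections are pullbacks of basic forms.
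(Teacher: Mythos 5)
Your proposal is correct and follows essentially the same route as the paper's proof: construct $\hat{M}$ from the integral class $[\hat{c}]$ via the Haefliger--Salem classification (Proposition \ref{prop:classification}) relative to the reference action, choose a connection $\hat{\Theta}$ with $d\hat{\Theta} = \hat{c}$, set $\hat{H} = \inp{c,\hat{\Theta}} + h$ with $c = d\Theta$, and take $F = \pm\inp{\Theta,\hat{\Theta}}$, whose non-degeneracy on the fibre directions follows from the defining property of connection one-forms. The extra steps you include — verifying $d\hat{H}=0$ from the decomposition of $dH=0$, and checking that everything extends over the singular strata of $M \times_B \hat{M}$ via Lemma \ref{lem:smoothcurv} and the adapted-subspace formalism — are details the paper leaves implicit, but they do not change the argument.
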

\begin{proof} 
Let $M_0 \rightarrow B$, be the reference torus action of $M$. Using Proposition \ref{prop:classification}, with respect to $M_0$ and $\hat{M}$,  the class $[\hat{c}] \in H^2(B;\zz^k)$ corresponds to a manifold $\hat{M} \rightarrow B$ endowed with a locally equivalent action. Moreover, there exists a connection $\hat{\Theta} \in \Omega^1(\ellihat;\mathfrak{t}^k)$ for which $d\hat{\Theta} = \hat{c}$. Now define
\begin{equation*}
\hat{H} = \inp{c,\hat{\Theta}}+h \in \Omega^3(\ellit),
\end{equation*}
with $c = d\Theta$, for some connection $\Theta \in \Omega^1(M;\mathfrak{t}^k)$. Now $(M,H)$ and $(\hat{M},\hat{H})$ are $T$-dual because
\begin{equation*}
p^*H-\tilde{p}^*\hat{H} = \inp{\hat{c},\Theta} - \inp{c,\hat{\Theta}} = \inp{d\hat{\Theta},\Theta} - \inp{d\Theta,\hat{\Theta}} = - d\inp{\Theta,\hat{\Theta}},
\end{equation*}
and as $\Theta$ and $\hat{\Theta}$ are connection-one-forms, we obtain that $F =- \inp{\Theta,\hat{\Theta}}$ satisfies the requirements of $T$-duality.
\end{proof}

\begin{remark}
Note that it might be preferable to put a direct condition on $H$, rather than the $[\hat{c}]$ in the decomposition. However, this will also impose further conditions on the $h$ term which are not strictly necessary. Still, in practice we will assume that $H$ is smooth and defines an integral cohomology class, which implies the assumptions in Theorem \ref{prop:Texistence} 
\end{remark}

\begin{remark}[Non-uniqueness]
This proposition provides the existence of $T$-duals, but these are by no means unique. Already in the case without fixed points, the construction does not take any torsion of the torus bundle into account. But also changing the specific $F$ used may change the topology of the resulting $T$-dual, see also Bunke-Schick \cite{MR2130624}.
Now the non uniqueness goes even further: because we made use of the classification of Haefliger-Salem, any $T$-dual produced using Proposition \ref{prop:Texistence} will necessarily be locally equivalent to $M$. However, that needn't be the case as we will see momentarily.
\end{remark}
\subsubsection{Topology change from singularities}\label{sec:topchange}
The singularities of the torus actions provide a new source of topology change for $T$-duals:
\begin{proposition}
Let $M,\hat{M}$ be manifolds with standard $T^k$-actions over the same contractible base $B$. Then $(M,H=0)$ and $(\hat{M},\hat{H}=0)$ are $T$-dual.
\end{proposition}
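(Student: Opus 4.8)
The plan is to produce the witnessing two-form directly, bypassing the Haefliger--Salem classification used in Theorem \ref{prop:Texistence}; this matters because here $\hat M$ is \emph{arbitrary} and need not be locally equivalent to $M$. Since $H = \hat H = 0$, the defining condition $p^*H - \hat p^*\hat H = dF$ collapses to asking for a $T^{2k}$-invariant \emph{closed} form $F \in \Omega^2(\ellio \times_B \ellihat)$ whose restriction to $\mathfrak t_M \otimes \mathfrak t_{\hat M}$ is non-degenerate. Modelling on the construction in Theorem \ref{prop:Texistence} and on the example $F = d\theta_1 \wedge d\theta_2$, I would take $F := -\inp{p^*\Theta, \hat p^*\hat\Theta}$ for connection one-forms $\Theta \in \Omega^1(\ellio;\mathfrak t^k)$ and $\hat\Theta \in \Omega^1(\ellihat;\mathfrak t^k)$. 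The connection axiom $\iota_{a(v)}\Theta = v$ forces the restriction of $F$ to the fibre directions to equal (minus) the standard pairing on $\mathfrak t^k$, hence it is automatically non-degenerate; and $dF = -\inp{p^*d\Theta,\hat p^*\hat\Theta} + \inp{p^*\Theta,\hat p^*d\hat\Theta}$, so $F$ is closed \emph{provided both connections are flat}.

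The heart of the argument is therefore to show that over a contractible base every standard $T^k$-action admits a flat connection, and this is where I expect the main difficulty. It is subtler than simply invoking $H^2(B;\mathfrak t^k) = 0$: the curvature $d\Theta$ of an arbitrary connection descends to a class in $H^2(\A_{\partial B};\mathfrak t^k)$, and by the decomposition \eqref{eq:basecohom} the latter group receives contributions from the boundary strata $\partial B[i]$ which need not vanish even when $B$ is contractible. The way around this is Lemma \ref{lem:smoothcurv}, which says the curvature is cohomologically smooth, i.e. $[d\Theta] = [\rho^*\tilde K]$ for some smooth $\tilde K \in \Omega^2(B;\mathfrak t^k)$, so that $[d\Theta]$ lies in the image of $H^2(B;\mathfrak t^k)$. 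Since $B$ is contractible this image is zero, whence $[d\Theta] = 0$ in $H^2(\A_{\partial B};\mathfrak t^k)$, and the descended curvature can be written as $K = d\gamma$ with $\gamma \in \Omega^1(\A_{\partial B};\mathfrak t^k)$.

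With such a $\gamma$ in hand I would set $\Theta_0 := \Theta - \tilde\pi^*\gamma$ and verify it is still a connection one-form: it stays $T^k$-invariant because $\tilde\pi^*\gamma$ is basic, and $\iota_{a(v)}\tilde\pi^*\gamma = 0$ since $\tilde\pi \comp a = 0$, so $\iota_{a(v)}\Theta_0 = v$; moreover $d\Theta_0 = \tilde\pi^*(K - d\gamma) = 0$, giving flatness. Running the same construction on $\hat M$ yields a flat $\hat\Theta_0$, and then $F := -\inp{p^*\Theta_0, \hat p^*\hat\Theta_0}$ meets all requirements by the first paragraph. The remaining points are routine: $T^{2k}$-invariance is inherited from the two connections, and the non-degeneracy computation $F(a(v),\hat a(w)) = -\inp{v,w}$ follows from the connection axioms once one observes that $p^*\Theta_0$ annihilates the $\hat M$-fibre directions and $\hat p^*\hat\Theta_0$ annihilates the $M$-fibre directions. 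This also makes transparent why the phenomenon is genuinely new: no compatibility between the singular loci of $M$ and $\hat M$ is required, so actions with completely different fixed-point structures become $T$-dual over a contractible base.
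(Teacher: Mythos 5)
Your proposal is correct and follows essentially the same route as the paper: produce flat connections $\Theta,\hat\Theta$ on $M$ and $\hat M$ and take $F = -\inp{\Theta,\hat\Theta}$, exactly as in the paper's proof (which in turn refers back to the computation in Theorem \ref{prop:Texistence}). The only difference is that the paper simply asserts that contractibility of $B$ yields flat connections, whereas you supply the justification --- Lemma \ref{lem:smoothcurv} to see the curvature class is cohomologically smooth, hence zero in $H^2(\A_{\partial B};\mathfrak{t}^k)$ when $B$ is contractible, and then the modification $\Theta_0 = \Theta - \tilde\pi^*\gamma$ --- which is a legitimate and welcome filling-in of that step.
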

\begin{proof}
Because the base of $M$ and $\hat{M}$ is contractible there exists flat connections $\Theta,\hat{\Theta}$ on $M$ respectively $\hat{M}$. Then, as in the proof of Proposition \ref{prop:Texistence} (the restriction of) $F = -\inp{\Theta,\hat{\Theta}}$ provides a $T$-dual between $(M,0)$ and $(\hat{M},0)$.
\end{proof}
In \cite{OR70} a classification of four-manifolds with effective $T^2$-actions and connected isotropy groups appears. All such manifolds are of the following form:
\begin{itemize}
\item $\#n (S^2\times S^2)$, $n \in \mathbb{N}$
\item $\#n (\cc P^2) \# m(\bar{\cc P^2})$, $n,m \in \mathbb{N}$
\item $S^4$
\end{itemize}
All of these manifolds have a contractible base, so we conclude by the above Theorem that if two have the same base they are $T$-dual. The only invariant of the base is the number of corners, which can be read of from the Euler characteristic of the total space. For instance $2\mathbb{C}P^2,\mathbb{C}P^2\# \bar{\cc P^2}$ and $S^2\times S^2$ are all $T$-dual. The common base of these manifolds is contractible, and hence the actions are necessarily not locally equivalent. For if the actions were locally equivalent, Theorem \ref{th:HS1} would imply that these spaces are all diffeomorphic.

Just as in principal $T$-duality, we obtain an isomorphism of differential complexes. The proof is omitted as it is identical with only the minor change that instead of developing the operation of push-forward/fiber integration we use the equivalent operation of ``interior product with a top degree multi-vector field'' which is readily understood in the Lie algebroid context.
\begin{proposition}\label{prop:complexiso}
Let $M,\hat{M}$ be manifolds endowed with standard $T^k$-actions over the same base $B$. Let $H \in \Omega^3_{\rm cl}(\ellio),\hat{H} \in \Omega^3_{\rm cl}(\ellit)$, and let $F \in \Omega^2(\ellio \times_B \ellit)$ provide a $T$-dual. Then 
\begin{equation}
\tau : \Omega^{\bullet}_{T^k}(\ellio) \rightarrow \Omega^{\bullet}_{T^k}(\ellit), \quad \tau(\rho) =  \iota_{X_1\wedge \cdots \wedge X_n} (e^F\wedge \rho),
\end{equation}
is an isomorphism of differential complexes $(\Omega^{\bullet}_{T^k}(M;\ellio),d_{H})$ and $(\Omega^{\bullet}_{T^k}(\hat{M};\ellit),d_{\hat{H}})$. Here $X_1,\ldots,X_n \in \Gamma(\ellio)$ are the infinitesimal generators of the torus action on $M$.
\end{proposition}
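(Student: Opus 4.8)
The plan is to imitate the classical Bouwknegt--Evslin--Mathai argument for principal $T$-duality (Theorem in \cite{BEM04}), since the statement asserts that the map $\tau$ is formally the same and the proof is claimed to be identical. The key point is that all the structure one needs---invariant differential forms, the twisted differentials $d_H$ and $d_{\hat H}$, the Courant--Dorfman/Clifford action, and fibre integration---makes perfect sense on the elliptic tangent bundles $\ellio$ and $\ellit$, because these are honest Lie algebroids with well-behaved de Rham complexes. So the strategy is to verify that each ingredient of the principal-bundle proof transports verbatim to the Lie algebroid setting, with the only nontrivial point being that the singular fibre product $M\times_B\hat M$ and its adapted Lie algebroid $\ellio\times_B\ellit$ behave well enough for the pullbacks $p^*,\hat p^*$ and the fibre integration to be defined.

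First I would make precise that $\tau(\rho) = \iota_{X_1\wedge\cdots\wedge X_n}(e^F\wedge p^*\rho)$ really lands in $\Omega^{\bullet}_{T^k}(\ellit)$: the contraction by the infinitesimal generators $X_1,\dots,X_n$ of the $T^k$-action on $M$ kills the $\mathfrak t_M$-directions, so after the contraction the form descends along $\hat p$ to a $T^k$-invariant $\ellit$-form on $\hat M$. This is exactly the ``integration over the fibres of $\hat p$'' in the classical statement, rewritten as a contraction; the nondegeneracy clause in the definition of $T$-duality (that $F\colon \mathfrak t_M\otimes\mathfrak t_{\hat M}\to\rr$ is nondegenerate) is precisely what guarantees this push-pull operation is an isomorphism rather than merely a chain map. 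I would check invertibility either by exhibiting the inverse as the symmetric operation with $F$ replaced by $-F$ and the roles of $M,\hat M$ swapped, or by a filtration/degree argument showing $\tau$ is triangular with invertible diagonal.

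Next I would verify the chain-map property $\tau\comp d_H = d_{\hat H}\comp\tau$. This is the computational heart and rests on three facts, all of which hold on the Lie algebroid level: the relation $dF = p^*H-\hat p^*\hat H$ (so that $d_{\ellio\times_B\ellit}(e^F\wedge p^*\rho) = e^F\wedge(p^*\,d_H\rho) + (p^*H-\hat p^*\hat H)\wedge e^F\wedge p^*\rho$ combines correctly), the fact that $d_{\ellit}$ commutes with the contraction $\iota_{X_1\wedge\cdots\wedge X_n}$ up to Lie-derivative terms that vanish by $T^k$-invariance (a Cartan-calculus identity valid in any Lie algebroid de Rham complex), and the compatibility of $d$ with $\hat p^*$. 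I would assemble these exactly as in the principal case, noting that each identity used is a purely formal consequence of the Lie algebroid de Rham calculus and therefore insensitive to the presence of the elliptic singular locus.

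The main obstacle I anticipate is purely foundational rather than computational: one must make sure that $p^*$, $\hat p^*$, and contraction are well defined on the singular correspondence space. Here I would lean on the machinery of Section \ref{sec:singgeometry}---that $M\times_B\hat M$ is an $(\ellio\boxtimes\ellit)$-adapted subspace, so $\ellio\times_B\ellit$ is a genuine Lie algebroid whose forms are restrictions of smooth forms on $M\times\hat M$---to guarantee that all operations are restrictions of smooth operations on the ambient smooth product and hence well defined. Once this is granted, the manipulations are literally those of \cite{BEM04}, which is why the statement can say ``the proof is omitted as it is identical''; the only genuinely new content is the verification that the Lie algebroid framework supports every step, and that verification is supplied by the adaptedness results established earlier.
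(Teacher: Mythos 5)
Your proposal is correct and takes essentially the same route as the paper: the paper omits the proof entirely, stating that it is identical to the principal-case argument of \cite{BEM04}, and your verbatim transport of that argument---the push-pull chain-map computation using $dF = p^*H - \hat{p}^*\hat{H}$, Cartan calculus on invariant forms, nondegeneracy of $F$ on the torus directions for invertibility---is exactly what that omission presupposes. Your added foundational check that the singular correspondence space and the adapted Lie algebroid $\ellio\times_B\ellit$ of Section \ref{sec:singgeometry} support pullback and contraction is the one point the paper leaves implicit, and you identify it correctly.
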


%

Moreover, we may consider the standard $\elli$-Courant algebroid, $\elli \oplus \elli^*$, and obtain Theorem \ref{th:courantiso} in our setting. Again, the proof is identical to the principal case and thus omitted.
\begin{theorem}\label{th:courantisoelliptic}
Let $M,\hat{M}$ be manifolds endowed with standard $T^k$-actions over the same base $B$. Let $H \in \Omega^3_{\rm cl}(\ellio),\hat{H} \in \Omega^3_{\rm cl}(\ellit)$, and assume that $(M,H)$ and $(\hat{M},\hat{H})$ are $T$-dual. Then there is an isomorphism between Courant algebroids $(\elli \oplus \ellit^*)/T^k  \rightarrow M$ and $(\elli \oplus \ellit^*)/T^k  \rightarrow \hat{M}$.
\end{theorem}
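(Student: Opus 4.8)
Theorem \ref{th:courantisoelliptic} asserts that $T$-duality for standard torus actions induces an isomorphism of the associated elliptic Courant algebroids. The authors explicitly note that the proof is "identical to the principal case" (Theorem \ref{th:courantiso}, due to \cite{CG11}), so the plan is to explain how the principal-case construction transplants verbatim once the Lie-algebroid machinery of the preceding sections is in place. Let me think about what this construction must actually be.

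In the principal case, the isomorphism $\varphi$ is built from the data $(p, \hat{p}, F)$ on the correspondence space. Concretely, an invariant section of $TM \oplus T^*M$ is pulled back along $p$ to the correspondence space, twisted by the closed two-form $F$ (via the $B$-field action $e^F$), and pushed down along $\hat{p}$. The key point making this well-defined is the non-degeneracy of $F$ on $\mathfrak{t}_M \otimes \mathfrak{t}_{\hat{M}}$, which guarantees that the fibre directions of $p$ and $\hat{p}$ are exchanged isomorphically. The compatibility with the twisted differential (that $dF = p^*H - \hat{p}^*\hat{H}$) is exactly what ensures $\varphi$ intertwines the $H$- and $\hat{H}$-twisted Courant brackets rather than merely the bundle structure.

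Let me think about the transplantation. First I would replace $TM \oplus T^*M$ everywhere by the elliptic Courant algebroid $\elli \oplus \elli^*$, and $T\hat{M} \oplus T^*\hat{M}$ by $\ellit \oplus \ellit^*$. The correspondence space carries the Lie algebroid $\elli \times_B \ellit$ (Section \ref{sec:singgeometry}), and $p, \hat{p}$ lift to Lie algebroid morphisms on it. The crucial structural inputs are: (i) $F \in \Omega^2(\elli \times_B \ellit)$ with $dF = p^*H - \hat{p}^*\hat{H}$, exactly as in the definition of elliptic $T$-duality; and (ii) the non-degeneracy pairing $F : \mathfrak{t}_M \otimes \mathfrak{t}_{\hat{M}} \to \rr$, where now $\mathfrak{t}_M, \mathfrak{t}_{\hat{M}}$ are the kernels of the algebroid submersions $\elli \times_B \ellit \to \ellio$ and $\to \ellit$. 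Since Section \ref{sec:backgroud} develops exact $\A$-Courant algebroids with their bracket, pairing, and Severa class in full generality for an arbitrary Lie algebroid $\A$, every algebraic manipulation in the \cite{CG11} proof — the isotropy of the relevant subspaces, the pull-push of sections, the $e^F$ twist — makes sense formally after this substitution. The explicit map is the one induced on invariant sections by the $\tau$ of Proposition \ref{prop:complexiso}, and the Clifford-compatibility $\tau(v \cdot \rho) = \varphi(v) \cdot \tau(\rho)$ fixes $\varphi$.

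The main obstacle is not algebraic but analytic-geometric: the correspondence space $M \times_B \hat{M}$ is \emph{singular}, so one must be certain that the pull-push construction is well-defined across the singular locus (where $r_i = 0$). This is precisely where the ``adapted subspace'' formalism of Section \ref{sec:singgeometry} does the work: because $M \times_B \hat{M}$ is an $\elli \boxtimes \ellit$-adapted subspace, the restricted Lie algebroid $\elli \times_B \ellit$ is an honest smooth vector bundle over the singular space, and its sections are restrictions of smooth ambient sections. Thus the fibre-product structure, which is problematic at the level of tangent bundles, becomes benign at the level of elliptic tangent bundles — the singularities of the base torus action are exactly absorbed by the kernel of the anchor. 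I would therefore emphasize that the only genuinely new verification, relative to \cite{CG11}, is that $p$ and $\hat{p}$ are Lie algebroid morphisms whose restrictions to the fibre subbundles $\mathfrak{t}_M, \mathfrak{t}_{\hat{M}}$ are isomorphisms onto complementary subbundles, after which the bracket-compatibility follows from $dF = p^*H - \hat{p}^*\hat{H}$ by the identical computation. Given the paper's stated intent, I would keep this brief and explicitly cite \cite{CG11} for the formal core.
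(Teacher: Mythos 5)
Your proposal takes essentially the same route as the paper: the paper's own "proof" consists precisely of the observation that the argument of \cite{CG11} (Theorem \ref{th:courantiso}) carries over verbatim once $TM\oplus T^*M$ is replaced by $\elli\oplus\elli^*$ and the correspondence space is equipped with the Lie algebroid $\elli\times_B\ellit$, which is exactly the transplantation you describe. Your identification of the adapted-subspace formalism as the ingredient that makes the pull--twist--push construction well defined across the singular locus is the correct reading of why the paper can afford to omit the details.
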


Using this isomorphism we are able to transport geometric structures on $\elli \rightarrow M$ to geometric structures on $\ellit \rightarrow \hat{M}$:
\begin{theorem}\label{th:transportgeom}
Let $(M,H)$ and $(\hat{M},\hat{H})$ be $T$-dual. Then any Dirac, generalized complex or SKT structure on $\elli \rightarrow M$ invariant under the torus action will be transported to a structure of the same kind on $\elli \rightarrow \hat{M}$.
\end{theorem}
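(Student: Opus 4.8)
The plan is to deduce the statement entirely from the Courant algebroid isomorphism $\varphi \colon (\elli \oplus \elli^*)/T^k \to (\ellihat \oplus \ellihat^*)/T^k$ furnished by Theorem \ref{th:courantisoelliptic}, exactly as in the principal case. The point is that each of the three structures is, by definition, a package of data carried by the standard $\elli$-Courant algebroid: a Dirac structure is an isotropic involutive subbundle; a generalized complex structure is an orthogonal complex structure $\J_\elli$ whose $+i$-eigenbundle is involutive; and an SKT structure is a generalized metric $\mathcal{G}$ together with a compatible (commuting) generalized complex structure $\J_\elli$ subject to a strong/closedness condition. Since $\varphi$ preserves both the symmetric pairing $\inp{\cdot,\cdot}$ and the Courant--Dorfman bracket $\Cour{\cdot,\cdot}$, it carries any such configuration to one of the same type.

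First I would make the descent step precise. A $T^k$-invariant structure on $\elli \oplus \elli^*$ is the same datum as a structure on the quotient Courant algebroid $(\elli\oplus\elli^*)/T^k$, and conversely any structure on the quotient pulls back to an invariant one upstairs; the same holds on the $\hat{M}$ side. Thus one transports a given invariant structure by descending it to $(\elli\oplus\elli^*)/T^k$, applying $\varphi$, and pulling back the result to a $T^k$-invariant structure on $\ellihat\oplus\ellihat^*$.

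Then I would run through the three cases. For a Dirac structure $(E_\elli,H)$, the descended subbundle is isotropic and closed under the quotient bracket; because $\varphi$ is an isometry that intertwines brackets, its image is again isotropic and involutive, and its rank is unchanged, so it pulls back to a $T^k$-invariant $\ellihat$-Dirac structure with background $\hat{H}$. For a generalized complex structure one transports $\varphi\comp\J_\elli\comp\varphi^{-1}$: orthogonality is preserved since $\varphi$ is an isometry, and the $+i$-eigenbundle of the conjugate is $\varphi$ applied to that of $\J_\elli$, hence still isotropic and involutive, so the image is again generalized complex. The SKT case combines these: a generalized metric is a self-adjoint orthogonal involution whose eigenbundles are definite with respect to the pairing, a condition phrased purely in terms of $\inp{\cdot,\cdot}$ and thus preserved by $\varphi$; conjugating the accompanying $\J_\elli$ as above, and using that $\varphi$ intertwines the brackets (and hence the twisted differentials $d_H$ and $d_{\hat H}$) to carry the commutation and strong/closedness conditions to their counterparts, produces an SKT structure on the dual side.

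The only step that carries genuine content, and the sole place where the bracket rather than merely the pairing is needed, is the preservation of the integrability conditions: involutivity for the Dirac and generalized complex cases, and the commutation plus closedness conditions for SKT. This is precisely what Theorem \ref{th:courantisoelliptic} provides, since $\varphi$ is an isomorphism of Courant algebroids and not merely of quadratic spaces. I would further remark that smoothness of the transported structure across the degeneracy locus $D$ requires no separate argument: $\varphi$ is a bona fide bundle isomorphism over all of $M$, including over $D$, so smoothness transports automatically. Finally, one may cross-check the transport through the Fourier--Mukai isomorphism $\tau$ of Proposition \ref{prop:complexiso}, which is expected to intertwine with $\varphi$ via $\tau(v\cdot\rho)=\varphi(v)\cdot\tau(\rho)$ exactly as in the principal case; transporting canonical (spinor) bundles along $\tau$ then yields the same structures and confirms consistency.
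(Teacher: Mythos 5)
Your proposal is correct and follows exactly the route the paper intends: the paper omits the proof as being identical to the principal case of \cite{CG11}, namely that the Courant algebroid isomorphism of Theorem \ref{th:courantisoelliptic} preserves the pairing and bracket, hence transports invariant Dirac, generalized complex and SKT structures by descending to the quotient, applying $\varphi$, and pulling back. Your case-by-case verification (isotropy/involutivity, orthogonality of the conjugated $\J$, and the metric plus closedness conditions for SKT) is precisely the content of that omitted argument.
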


However, there is one key difference compared to the case of principal $T$-duality. Given a $T^k$-invariant generalized complex structure on $M$ which lifts to a generalized complex structure on $\elli \rightarrow M$, we can transport it to a generalized complex structure on $\ellihat \rightarrow \hat{M}$. However, a prior there is no guarantee that it again descends to a generalized complex structure on $M$. We illustrate this phenomenon by the following examples:
\begin{example}[$\mathbb{\cc}$]
Consider $M = \cc$ with the standard $S^1$-action, and endow it with the elliptic symplectic form $d\log r \wedge d\theta = \frac{dx \wedge dy}{x^2+y^2}$. If we let $\hat{M} = \cc$ denote another another copy of $\cc$, we have that $F= d\theta \wedge d\hat{\theta}$ provides a $T$-dual. Under this $T$-duality, the elliptic symplectic structures $e^{id\log r \wedge d\theta}$ is send to spinor $d\log \hat{z}$. This spinor corresponds to the complex structure on $\elli = \set{r\partial_r,\partial_{\theta}}$ defined by $r\partial_r \mapsto \partial_{\theta}$. It is immediate to see that this complex structure is a lift of the standard complex structure on $\cc$. Hence we conclude that the elliptic symplectic form, which is not a generalized complex structure, is $T$-dual to the standard complex structure.
\end{example}
The next example will illustrate that $T$-duality for standard torus actions is very sensitive to the specific $F \in \Omega^2(\elli \times_B \ellihat)$ chosen:
\begin{example}[$\mathbb{\cc}^2$]\label{ex:c2}
Consider $M = \cc^2$ with coordinates $(z_1,z_2)$ and $\hat{M} = \cc^2$ with coordinates $(\hat{z}_1,\hat{z}_2)$. Endow both with the standard $T^2$-action. There are ample choices for a $T$-duality between $(M,0)$ and $(\hat{M},0)$. Indeed, any of the following two-forms will suffice:
\begin{itemize}
\item $F_{1,\pm} = d\theta_1 \wedge d\hat{\theta}_1 \pm d\theta_2 \wedge d\hat{\theta}_2$.
\item $F_{2,\pm} = -d\theta_1 \wedge d\hat{\theta}_2 \pm d\theta_2 \wedge d\hat{\theta}_1$.
\end{itemize}
Although each of these forms satisfy the conditions necessary of $T$-duality, the behaviour with respect to specific generalized complex structures is quite different.

We consider the stable generalized complex structure given by $\rho = z_1z_2 + dz_1 \wedge dz_2$. The corresponding elliptic symplectic form is $\omega = d\log r_1 \wedge d\theta_2 + d\theta_1 \wedge d\log r_2$ with spinor $\rho = 1 + id\log r_1 \wedge d\theta_2 + id\theta_1 \wedge d\log r_2 - d\log r_1 \wedge d\theta_1 \wedge d\log r_2 \wedge d\theta_2$. 

By Theorem \ref{th:transportgeom} we know that $T$-duality with any of these forms will provide generalized complex structures on the elliptic tangent bundle. However, a direct computation will show that only $F_{2,+}$ provides a generalized complex structure on $M$. In that case we have that the $T$-dual generalized complex structure is given by:
%
%
%
\begin{equation*}
\rho_{2,+} = d\log z_1 \wedge d\log z_2.
\end{equation*}
One immediately checks that $\rho_{2,+}$ is a lift of the standard complex structure on $\cc$. In conclusion, this elliptic symplectic structure on $\cc^2$ is $T$-dual to the standard complex structure on $\mathbb{C}^2$.

The story for the real part of $\rho$ is quite different. Consider the elliptic symplectic form $\omega = d\log r_1 \wedge d\log r_2 - d\theta_1 \wedge d\theta_2$, with spinor 
\begin{equation*}
\rho = 1 + i d\log r_1 \wedge d\log r_2 - id\theta_1 \wedge d\theta_2 + d\theta_1 \wedge d\theta_2 \wedge d\log r_1 \wedge d\log r_2.
\end{equation*}
Again, one can perform a direct computation showing that only $F_{1,-}$ will provide a generalized complex structure on $M$. In that case the $T$-dual is given by:
\begin{equation*}
-ie^{-i(d\log r_1 \wedge d\log r_2 + d\hat{\theta}_1 \wedge d\hat{\theta_2})}
\end{equation*}
%
In conclusion we see that $\rho$ is $T$-dual to itself. \hfill \qedhere

%
%
%
%
%
%
\end{example}

\section{Blow-ups}\label{sec:blowups}
In this section we will study the interaction between elliptic geometry and blow-ups. We will work with the formalism of generalized complex blow-ups introduced in \cite{BCD16}. We will show that blow-ups of submanifolds of the degeneracy locus of an elliptic divisor induce Lie algebroid submersions between the respective elliptic tangent bundles. Torus actions are preserved by blow-ups at fixed points, and hence we may consider the relation with $T$-duality. We will show that $T$-duality and blowing up commutes.

\subsection{Blow-ups in generalized complex geometry}
The contents of this subsection appear in \cite{BCD16}, we recall them for completeness. 

As we do not assume any holomorphicity of our manifolds, more data is necessary in constructing a blow-up:
\begin{definition}[\cite{BCD16}]\label{def:holideal}
Let $M^n$ be a smooth manifold, and let $Y$ be a closed embedded submanifold of codimension $2l$. A \textbf{holomorphic ideal} for $Y$ is a sheaf of ideals $I_Y \subset C^{\infty}(M;\cc)$ such that:
\begin{itemize}
\item $I_Y|_{M\backslash Y} = C^{\infty}(M;\cc)|_{M\backslash Y}$.
\item For every $y \in Y$, there is a neighbourhood $U$ with coordinates $z_1,\ldots,z_l,x_{2l+1},\ldots,x_n : U \rightarrow \cc^l\times \rr^{n-2l}$ such that $Y \cap U = \set{z_1=\ldots=z_l = 0}$ and $I_Y|_U = \inp{z_1,\ldots, z_l}$. \hfill \qedhere
\end{itemize}
\end{definition}

As shown in \cite{BCD16}, a submanifold $Y$ admits a holomorphic ideal if and only if the normal bundle $NY$ admits a complex structure.

\begin{theorem}[\cite{BCD16}]
Given a submanifold $Y$ and a holomorphic ideal $I_Y$, there exists a, unique up to isomorphism, manifold $\tilde{M}$ together with a map $p : \tilde{M} \rightarrow M$  such that $p^*I_Y$ is a divisor satisfying the following universal property: For any smooth map $f : X \rightarrow M$, such that $f^*I_Y$ is a divisor, there exists a unique $\tilde{f} : X \rightarrow \tilde{M}$ such that the following diagram commutes:
\begin{center}
\begin{tikzcd}
X \ar[r,"\tilde{f}"] \ar[rd,"f"] & \tilde{M} \ar[d,"p"]\\
& M
\end{tikzcd}
\end{center}
\end{theorem}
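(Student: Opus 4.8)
The plan is to construct $\tilde M$ by gluing local blow-ups and then to extract both the universal property and the uniqueness statement from a local version of the same assertion, the analytic heart of which is an extension lemma for a Gauss-type map.

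First I would build the local model. Fix a chart $U$ as in Definition~\ref{def:holideal}, so that $U \cong \cc^l \times \rr^{n-2l}$, $Y\cap U = \set{z_1 = \cdots = z_l = 0}$ and $I_Y|_U = \inp{z_1,\ldots,z_l}$. Let $\mathrm{Bl}_0(\cc^l) = \set{(z,\ell) \in \cc^l \times \cc P^{l-1} : z \in \ell}$, a smooth manifold, and define the local blow-up to be $\tilde U := \mathrm{Bl}_0(\cc^l)\times \rr^{n-2l}$ with $p\colon \tilde U \to U$ the projection. On the standard affine chart of $\cc P^{l-1}$ where the $j$-th homogeneous coordinate is nonzero, $\tilde U$ has smooth coordinates in which $p^* z_j$ generates $p^* I_Y$; hence $p^* I_Y$ is a complex divisor, whose vanishing locus is the exceptional divisor.

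Next I would prove the universal property for this local model, which is where the real content lies. Given $f\colon X \to U$ with $f^* I_Y$ a divisor, write $f = (f_1,\ldots,f_l, f')$ with $f_i = f^* z_i$. The assignment $x \mapsto [f_1(x):\cdots:f_l(x)] \in \cc P^{l-1}$ is defined away from $f^{-1}(Y)$, and together with $f$ it gives the only possible lift there, since $p$ restricts to a diffeomorphism over $U\setminus Y$. The key point is that this lift extends smoothly across $f^{-1}(Y)$ precisely because $f^* I_Y$ is locally principal: near a point of $f^{-1}(Y)$ we may write $\inp{f_1,\ldots,f_l} = \inp{g}$ for a local generator $g$, so that $f_i = a_i g$ with $a_i \in C^\infty(X;\cc)$; the relation $g = \sum_i b_i f_i$ then yields $(\sum_i b_i a_i - 1)g = 0$, and since $g$ has nowhere dense zero set we get $\sum_i b_i a_i \equiv 1$, so $(a_1,\ldots,a_l)$ is nowhere zero. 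Hence $[f_1:\cdots:f_l] = [a_1:\cdots:a_l]$ extends to a smooth map into $\cc P^{l-1}$, and $\tilde f := (f, [a_1:\cdots:a_l])$ is a smooth lift landing in $\tilde U$ because $(f_1,\ldots,f_l) = g\cdot(a_1,\ldots,a_l)$ lies on the line $[a_1:\cdots:a_l]$. Uniqueness is immediate, as any two lifts agree on the dense set $X\setminus f^{-1}(Y)$.

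Finally I would globalise and deduce uniqueness of $\tilde M$. Covering $M$ by charts $U_\alpha$ as above, the local universal property applied to the transition diffeomorphisms identifies the local blow-ups $\tilde U_\alpha$ over overlaps by unique isomorphisms commuting with the projections; these satisfy the cocycle condition by uniqueness, so they glue to a manifold $\tilde M$ with a global $p\colon \tilde M \to M$ and $p^* I_Y$ a divisor. The global universal property is checked locally: for $f\colon X \to M$ with $f^* I_Y$ a divisor, the locally defined lifts agree on overlaps by their uniqueness and assemble to a unique global $\tilde f$. Uniqueness of $(\tilde M, p)$ up to isomorphism is then the usual formal consequence of the universal property. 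I expect the analytic extension step of the previous paragraph to be the main obstacle: everything hinges on the fact that the divisor hypothesis on $f^* I_Y$ is exactly what resolves the indeterminacy of $[f_1:\cdots:f_l]$ along $f^{-1}(Y)$.
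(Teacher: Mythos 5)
Your proof is correct, and the argument is essentially the standard one: this theorem is quoted in the paper from \cite{BCD16} without proof, and the construction given there proceeds exactly as you do, via the tautological local model, the extension of the Gauss-type map $[f_1:\cdots:f_l]$ across $f^{-1}(Y)$ using local principality of $f^*I_Y$, gluing by uniqueness of local lifts, and the formal categorical argument for uniqueness of $(\tilde{M},p)$. In particular, your key lemma --- that $f_i = a_i g$ with $\sum_i b_i a_i \equiv 1$ forces $(a_1,\ldots,a_l)$ to be nowhere zero --- is precisely the analytic heart of the cited proof.
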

We call $\tilde{M}$ the \textbf{blow-up of $I_Y$ in $M$}, and $p : \tilde{M} \rightarrow M$ the blow-down map. The natural manifolds one can blow-up in generalized complex geometry are the following:
\begin{definition}[\cite{BCD16}]
Let $(M,\mathbb{J},H)$ be a generalized complex manifold. A submanifold $Y$ is called \textbf{generalized Poisson} if 
\begin{equation*}
\mathbb{J} N^*Y = N^*Y. \hfill \qedhere
\end{equation*}
\end{definition}
An immediate consequence of this definition is that $Y$ is a Poisson submanifold of the underlying Poisson structure $\pi_{\mathbb{J}}$. Moreover, if $Q$ is a holomorphic Poisson submanifold and $\mathbb{J}_{Q}$ is the associated generalized complex structure, a submanifold $Y$ is a generalized Poisson submanifold if and only if it is a holomorphic Poisson submanifold.

By definition, the generalized complex structure induces a complex structure on the normal bundle of a generalized Poisson submanifold. Hence:
\begin{lemma}[\cite{BCD16}]
Let $(M,\mathbb{J},H)$ be a generalized complex manifold and $Y$ a generalized Poisson submanifold. Then there exist a canonical holomorphic ideal $I_Y$ for $Y$ for which the induced complex structure on $NY$ coincides with the one induced by $\mathbb{J}$.
\end{lemma}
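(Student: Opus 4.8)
\subsection*{Proof proposal}

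The plan is to extract the normal complex structure directly from $\mathbb{J}$ and then feed it into the correspondence recalled above between complex structures on $NY$ and holomorphic ideals. First I would make the preceding remark precise. The conormal bundle sits inside $\mathbb{T}M|_Y$ as $N^*Y \subset T^*M|_Y$, and the generalized Poisson condition is exactly $\mathbb{J}(N^*Y) = N^*Y$. Since $\mathbb{J}$ is real-linear with $\mathbb{J}^2 = -\mathrm{id}$, its restriction $\mathbb{J}|_{N^*Y}$ is an endomorphism squaring to $-\mathrm{id}$, hence a complex structure on the real bundle $N^*Y$. Dualizing gives a complex structure $J_N$ on $NY = (N^*Y)^*$. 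This $J_N$ is manufactured from $\mathbb{J}$ with no auxiliary choices, and it is the normal complex structure that the sought-after ideal must reproduce.

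Second, I would produce a holomorphic ideal realizing $J_N$. The clean way is the tautological construction on a complex vector bundle: on the total space of $(NY, J_N)$ the zero section carries a canonical holomorphic ideal, obtained by locally trivializing $NY$ as $U \times \cc^l$ and taking the ideal generated by the fibrewise $\cc$-linear coordinates $z_1, \ldots, z_l$; because the transition functions take values in $\GL(l,\cc)$, this ideal is independent of the trivialization and patches to a global ideal on the total space. Transporting it through a tubular-neighbourhood embedding $NY \hookrightarrow M$ and declaring it to be the full structure sheaf away from $Y$ yields a holomorphic ideal $I_Y$ on $M$ in the sense of Definition \ref{def:holideal}. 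By construction the differentials of the local generators restrict along $Y$ to the $\cc$-linear functionals of $(NY, J_N)$, so the complex structure that $I_Y$ induces on $NY$ is precisely $J_N$, as required.

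The main obstacle is the word \emph{canonical}. The tautological ideal on the total space of $NY$ is genuinely canonical, but pushing it to $M$ uses a tubular neighbourhood, which is a choice. What must be argued is that this choice is immaterial for the purpose at hand: any two such embeddings agree to first order along $Y$ and therefore induce the same normal complex structure $J_N$, and by the uniqueness in the blow-up theorem recalled above the resulting blow-ups are canonically isomorphic. Thus, while $I_Y$ as a sheaf depends mildly on the tubular neighbourhood, the data that $\mathbb{J}$ actually pins down---the complex structure $J_N$, equivalently the blow-up it determines---is canonical.

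I would close by checking consistency in the model case: when $\mathbb{J}$ is the structure $\mathbb{J}_Q$ associated to a holomorphic Poisson manifold $Q$ and $Y$ is a holomorphic Poisson submanifold, the restriction $\mathbb{J}|_{N^*Y}$ reproduces the honest complex structure on the complex normal bundle of $Y$, so $J_N$ is the classical normal complex structure and $I_Y$ recovers the usual holomorphic ideal of $Y$. In this case the construction is canonical on the nose, which confirms that the general construction is the correct extension of the holomorphic picture.
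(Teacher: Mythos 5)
Your first two steps are fine: $\mathbb{J}(N^*Y)=N^*Y$ together with $\mathbb{J}^2=-\id$ does give a complex structure $J_N$ on $NY$, and the tautological ideal on the total space of $(NY,J_N)$, transported by a tubular neighbourhood embedding, is a holomorphic ideal inducing $J_N$. But this only proves \emph{existence} of such an ideal, which is the easy direction of the equivalence recalled just before the lemma (a holomorphic ideal exists if and only if $NY$ admits a complex structure). The entire content of the lemma is the word \emph{canonical}, and there your proposal has a genuine gap: the ideal you build depends on the tubular neighbourhood, and the argument that this dependence is harmless does not work. First-order data along $Y$ can never determine a holomorphic ideal: for $Y=\set{0}\subset\cc$ the ideals $\inp{z}$ and $\inp{z+\bar{z}^2}$ induce the same complex structure on the normal bundle at the origin (both generators have differential $dz$ there), yet they are different ideals, since $\bar{z}^2/z$ is homogeneous of degree one but not linear, hence not smooth at $0$; moreover they arise from two tubular neighbourhoods differing by the diffeomorphism $z\mapsto z+\bar{z}^2$, which is tangent to the identity at $0$. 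So ``the embeddings agree to first order, hence the outcome is canonical'' is false exactly at the point where it is needed.

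The fallback claim --- that at least the blow-ups of the two ideals are canonically isomorphic ``by the uniqueness in the blow-up theorem'' --- is also unjustified: that uniqueness is uniqueness of the blow-up of a \emph{fixed} ideal $I_Y$; to compare the blow-ups of two different ideals $I_1,I_2$ via the universal property one would need $p_1^*I_2$ to be a divisor on the blow-up of $I_1$, which fails in general, and even when the underlying manifolds agree the exceptional divisors $p_1^*I_1$ and $p_1^*I_2$ differ, which is precisely the structure the rest of the paper feeds into Theorem \ref{th:blowupelliptic}. What actually makes the ideal canonical in \cite{BCD16} (to which the present paper defers, adding only the observation that $\mathbb{J}$ induces $J_N$) is the \emph{integrability} of $\mathbb{J}$ on a neighbourhood of $Y$, not just the pointwise condition on $N^*Y$: by Bailey's local normal form theorem, $\mathbb{J}$ is locally equivalent, up to $B$-field, to a holomorphic Poisson structure, under which a generalized Poisson submanifold becomes a holomorphic Poisson submanifold; the ideal locally generated by holomorphic functions vanishing on $Y$ is then shown to be independent of the chosen normal form and to glue to a global ideal. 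Your construction never invokes integrability, and no construction that factors only through $J_N$ plus auxiliary choices can produce a canonical ideal, since $J_N$ provably does not determine one.
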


Given that a generalized Poisson submanifold gives rise to a holomorphic ideal, and hence a canonical blow-up, the next step is to determine when the generalized complex structure lifts to the blow-up. For this we need the following notion:
\begin{definition}
A Lie algebra $\mathfrak{g}$ is said to be degenerate if the map $\Lambda^3(\mathfrak{g}) \rightarrow \rm{Sym}^2(\mathfrak{g})$
\begin{equation*}
x\wedge y \wedge z \mapsto [x,y] \wedge z,
\end{equation*}
vanishes.
\end{definition}
Given a generalized Poisson submanifold $Y \subset (M,\mathbb{J})$, it is in particular a Poisson submanifold and hence one obtains a fibre-wise Lie algebra structure on $N^*Y$. This Lie algebra structure captures whether the generalized complex structure lifts to the blow-up:
\begin{theorem}[\cite{BCD16}]
Let $(M,\mathbb{J},H)$ be a generalized complex manifold and $Y$ a generalized Poisson submanifold. Then there exists a generalized complex structure $\tilde{\mathbb{J}}$ on the blow-up $p : \tilde{M} \rightarrow M$, with respect to the canonical holomorphic ideal $I_Y$, for which $p$ is a generalized holomorphic map, if and only if the Lie algebra structure on $N^*Y$ is degenerate.
\end{theorem}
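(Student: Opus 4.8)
The statement is local near $Y$: writing $E = p^{-1}(Y)$ for the exceptional divisor, the blow-down $p$ restricts to a diffeomorphism $\tilde M\setminus E \to M\setminus Y$, so if $p$ is to be generalized holomorphic then any lift must satisfy $\tilde{\mathbb{J}}|_{\tilde M\setminus E} = p^*\mathbb{J}$. The entire content of the theorem is therefore whether $p^*\mathbb{J}$ extends smoothly across $E$, a question that can be settled in a tubular neighbourhood of $Y$. The first step of the plan is to bring $\mathbb{J}$ into a semi-local normal form: identifying a neighbourhood of $Y$ with a neighbourhood of the zero section of $NY$ and absorbing a gauge $B$-field, one should reduce to the model in which $\mathbb{J}$ is determined by $\mathbb{J}|_Y$, the complex structure $J_N$ on $NY$ defining $I_Y$, and the transverse part of $\pi_{\mathbb{J}}$. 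Since $Y$ is a Poisson submanifold, $\pi_{\mathbb{J}}$ is tangent to $Y$ and its fibrewise linearization along the normal directions is exactly the Lie--Poisson bivector of the conormal Lie algebra $\mathfrak{g} := N^*Y$; this linear term is the only part that can obstruct the extension, so I would fix all smooth data and work with the model in which the normal part of $\pi_{\mathbb{J}}$ is fibrewise linear.

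Second, I would compute the blow-up in this model. In a chart with normal coordinates $z_1 = w$, $z_i = w u_i$ for $i\geq 2$, the function $w$ cuts out $E$, and one has $\partial_{z_1} = \partial_w - \sum_{i\geq 2}(u_i/w)\partial_{u_i}$ and $\partial_{z_j} = (1/w)\partial_{u_j}$. Substituting these into the linear bivector $\pi_{\mathbb{J}} = \sum_{k}\sum_{i<j} c^{k}_{ij}\, z_k\, \partial_{z_i}\wedge\partial_{z_j}$, where the $c^{k}_{ij}$ are the structure constants of $\mathfrak{g}$, produces at worst a simple pole in $w$: the components of $\pi_{\mathbb{J}}$ with a leg tangent to $Y$ pull back to smooth bivectors, while the purely normal fibrewise-linear component contributes a term of order $w^{-1}$. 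A direct bookkeeping identifies the coefficient of $w^{-1}$ with the total antisymmetrization $x\wedge y\wedge z\mapsto [x,y]\wedge z + [y,z]\wedge x + [z,x]\wedge y$ of the bracket, that is, with the image of the degeneracy map $\Lambda^3\mathfrak{g}\to \mathrm{Sym}^2\mathfrak{g}$ from the definition; the Jacobi identity is what makes this expression well defined on $\Lambda^3\mathfrak{g}$.

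This pins down both implications. If $\mathfrak{g}$ is degenerate the $w^{-1}$ term vanishes in every chart, $p^*\pi_{\mathbb{J}}$ extends to a smooth holomorphic Poisson bivector on $\tilde M$, and the complex structure $J_N$ extends canonically since the blow-up of a complex manifold is complex; I would then reassemble $\tilde{\mathbb{J}}$ by transporting the canonical line $p^*K$, multiplying its local spinor generator by the appropriate power of the defining section of $E$, and checking that the result is a nowhere-zero pure spinor whose integrability $d\tilde\rho = (\tilde X + \tilde\xi)\cdot\tilde\rho$ holds off $E$ and hence everywhere by continuity. Conversely, if $\mathfrak{g}$ is non-degenerate the $w^{-1}$ term is a genuine pole, so the underlying Poisson bivector of $p^*\mathbb{J}$ does not extend across $E$; since generalized holomorphicity forces any lift to agree with $p^*\mathbb{J}$ off $E$, no smooth $\tilde{\mathbb{J}}$ can exist.

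The main obstacle is the normal-form reduction of the first step: producing a model in which the transverse structure is exactly the fibrewise-linear Lie--Poisson bivector of $\mathfrak{g}$ requires a linearization (essentially an averaging/Moser-type) argument for generalized complex structures near a generalized Poisson submanifold, together with control of the gauge $B$-field and of the variation of $\mathfrak{g}$ along $Y$. Once that reduction is secured, the remaining steps---the coordinate computation and the identification of the polar coefficient with the degeneracy map---are combinatorial. A secondary subtlety in the sufficiency direction is to verify that the reassembled spinor is genuinely of pure-spinor type across the type-change locus cut out by $I_Y$, so that $\tilde{\mathbb{J}}$ is a bona fide generalized complex structure and not merely a Poisson bivector with a compatible complex structure.
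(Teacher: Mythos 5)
A preliminary remark: the paper you are reviewing does not prove this theorem --- it is recalled from \cite{BCD16} --- so your proposal must be measured against the argument given there. In outline you do follow the same strategy: uniqueness of any lift away from the exceptional divisor $E$, reduction to transverse data along $Y$, a chart computation exhibiting a simple pole in $p^*\pi_{\mathbb{J}}$, and identification of the polar coefficient with the degeneracy map $\Lambda^3(N^*Y)\rightarrow \mathrm{Sym}^2(N^*Y)$. However, there is a genuine gap, and you flag it yourself without resolving it: your whole argument is built on a model ``in which the normal part of $\pi_{\mathbb{J}}$ is fibrewise linear''. No such normal form exists in general. Linearizing a Poisson structure along a Poisson submanifold (even along a single point) is a famously delicate problem that fails outside special classes of Lie algebras, and no averaging/Moser-type argument will produce it here, let alone compatibly with the $B$-field and the complex normal data. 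The good news is that the linearization is unnecessary, and your own sentence ``this linear term is the only part that can obstruct the extension'' is the correct substitute: in coordinates adapted to $Y$ the normal--normal component of the bivector reads $\sum_k c^{ij}_k z_k + O(|z|^2)$, and in your chart $z_1=w$, $z_j=wu_j$ the frame term $\partial_{z_i}\wedge\partial_{z_j}$ pulls back with at worst a factor $w^{-2}$; hence the $O(|z|^2)=O(w^2)$ tail pulls back to a \emph{smooth} bivector and only the linear (Lie--Poisson) term can create the $w^{-1}$ pole. The correct proof therefore runs the chart computation with the full Taylor expansion, never invoking a normal form; as proposed, your proof rests on an unproven (and generally false) reduction, even though the fix is already contained in your own text.

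The second soft spot is the passage from generalized complex to complex/Poisson data, which you compress into ``absorbing a gauge $B$-field''. In \cite{BCD16} this step is not elementary: it uses Bailey's local normal form theorem to show that near each point of a generalized Poisson submanifold, $\mathbb{J}$ is, up to diffeomorphism and $B$-field, induced by a holomorphic Poisson structure for which $Y$ is locally a holomorphic Poisson submanifold; the blow-up is then performed in these local holomorphic Poisson charts, and the local lifts glue precisely because of the uniqueness you establish in your first paragraph (all of them agree with $p^*\mathbb{J}$ on the dense set $\tilde M\setminus E$, and the gauge $B$-fields pull back). Your alternative --- transporting the canonical line $p^*K$ and multiplying its generator by a power of a defining section of $E$ --- presupposes the smooth extension across $E$ that is to be proved, so as stated it is circular outside the holomorphic Poisson charts. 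By contrast, your necessity direction is sound as it stands: if the conormal Lie algebra is non-degenerate at some point, the pole in $p^*\pi_{\mathbb{J}}$ is genuine in some chart, and since any generalized holomorphic lift would force the smooth bivector $\pi_{\tilde{\mathbb{J}}}$ to coincide with $p^*\pi_{\mathbb{J}}$ off $E$, no lift can exist.
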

We may blow-up components of the degeneracy locus of a stable generalized complex structure:
\begin{lemma}\label{lem:stableblow}
Let $(M,\mathbb{J})$ be a stable generalized complex manifold, and let $Y \subset D(l)$ be an irreducible component. Then $Y$ is a generalized Poisson submanifold for which the fibre-wise Lie algebra structure on $N^*Y$ is trivial.
\end{lemma}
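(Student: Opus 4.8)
The plan is to work in the local normal form of the elliptic divisor near a point of $Y$ and to transfer everything to $\elli$ via the correspondence between stable structures and elliptic symplectic forms, exploiting that the anchor $\rho\colon\elli\to TM$ makes the conormal directions of $Y$ manifestly degenerate. First I would fix $p\in Y$ of multiplicity exactly $l$ and choose coordinates as in Remark \ref{rem:locform}, so that locally $Y=\set{z_1=\cdots=z_l=0}$ with $z_i=x_{2i-1}+ix_{2i}$, $r_i=\abs{z_i}$, and the remaining coordinates $y_j$ parametrise $Y$. The frame $r_i\partial_{r_i},\partial_{\theta_i},\partial_{y_j}$ of $\elli$ together with $\rho(r_i\partial_{r_i})=x_{2i-1}\partial_{x_{2i-1}}+x_{2i}\partial_{x_{2i}}$ and $\rho(\partial_{\theta_i})=x_{2i-1}\partial_{x_{2i}}-x_{2i}\partial_{x_{2i-1}}$ shows that along $Y$ the kernel of $\rho$ is spanned by $\set{r_i\partial_{r_i},\partial_{\theta_i}}_{i\le l}$ and its image is $TY$. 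The computation I will lean on is $\rho^*dx_{2i-1}=x_{2i-1}\,d\log r_i-x_{2i}\,d\theta_i$ and $\rho^*dx_{2i}=x_{2i}\,d\log r_i+x_{2i-1}\,d\theta_i$: these are smooth sections of $\elli^*$ vanishing to first order along $Y$, so $\rho^*$ sends $N^*Y$ into sections of $\elli^*$ that vanish on $Y$.

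Next I would bring in the Poisson side. By the correspondence of \cite{CKW20} the structure $\mathbb{J}$ corresponds to an elliptic symplectic form $\omega\in\Omega^2(\elli)$ with $\omega=\pi_{\mathbb{J}}^{-1}$; writing $\pi_\omega:=\omega^{-1}\in\Gamma(\wedge^2\elli)$ one has $\pi_{\mathbb{J}}=\rho_*\pi_\omega$, hence $\pi_{\mathbb{J}}^\sharp=\rho\comp\pi_\omega^\sharp\comp\rho^*$. For $\xi\in N^*Y$ the previous paragraph gives $\rho^*\xi|_Y=0$, so $\pi_{\mathbb{J}}^\sharp\xi$ vanishes on $Y$; in particular $\mathbb{J}\xi=\pi_{\mathbb{J}}^\sharp\xi-J^*\xi=-J^*\xi$ has no tangent component. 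To upgrade this to $\mathbb{J}N^*Y=N^*Y$ I must still show $J(TY)\subseteq TY$. For this I would use that the $+i$-eigenbundle of $\mathbb{J}$ is the Dirac pushforward $L=\mathfrak{F}_{\rho}(\mathrm{graph}(-i\omega))=\set{\rho(X)+\xi:\rho^*\xi=-i\,\omega^\flat X}$, whose projection to $T_\cc M$ lands in the image of $\rho_\cc$, which equals $T_\cc Y$ along $Y$. Decomposing a real $v\in T_pM$ as $v=a+b$ with $a\in L$, $b\in\bar L$ then gives that $Jv$, the $T_\cc M$-component of $ia-ib$, lies in $T_\cc Y\cap T_pM=T_pY$. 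Hence $J(T_pM)\subseteq T_pY$, so in particular $-J^*\xi\in N^*Y$, giving $\mathbb{J}N^*Y\subseteq N^*Y$; since $\mathbb{J}^2=-1$ this inclusion is forced to be an equality and $Y$ is a generalized Poisson submanifold.

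Finally, for the triviality of the fibre-wise Lie algebra I would use that $N^*Y\subseteq\ker\pi_{\mathbb{J}}^\sharp$ along $Y$, so the anchor of the conormal Lie algebroid vanishes and $N^*Y$ is a bundle of Lie algebras with (tensorial) bracket $[\alpha,\beta]|_Y=d\bigl(\pi_{\mathbb{J}}(\alpha,\beta)\bigr)|_Y$. Evaluating on the coordinate generators $\alpha=dx_a$, $\beta=dx_b$ with $a,b\le 2l$ and using $\pi_{\mathbb{J}}(dx_a,dx_b)=\pi_\omega(\rho^*dx_a,\rho^*dx_b)$ together with the first-order vanishing of $\rho^*dx_a$, the function $\pi_{\mathbb{J}}(dx_a,dx_b)$ vanishes to second order along $Y$; hence its differential vanishes on $Y$ and the bracket is identically zero, so the fibre-wise Lie algebra is trivial.

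I expect the main obstacle to be the generalized Poisson step: upgrading the easy fact that $\pi_{\mathbb{J}}^\sharp$ kills $N^*Y$ along $Y$ (which by itself only records that $Y$ is degenerate for the Poisson structure) to the genuine invariance $\mathbb{J}N^*Y=N^*Y$, i.e. to $J(TY)\subseteq TY$. This is where one really needs the pushforward description of the eigenbundle and the identification of the image of $\rho_\cc$ with $T_\cc Y$ over $Y$; by contrast the order-of-vanishing bookkeeping for the bracket is routine once the first-order vanishing of $\rho^*dx_a$ is in hand. A secondary point to treat with care is that $Y$, being an irreducible component of $D(l)$, has multiplicity exactly $l$ only on a dense open subset, so the pointwise statements should be proved there and extended to all of $Y$ by continuity and smoothness of $\mathbb{J}$ and $\omega$.
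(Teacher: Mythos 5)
Your first and third steps are fine, and they are essentially the paper's own arguments made explicit: the computation $\rho^*dx_{2i-1}=x_{2i-1}d\log r_i - x_{2i}d\theta_i$ shows $\rho^*$ kills $N^*Y$ to first order along $Y$, so $\pi_{\mathbb{J}}^\sharp=\rho\comp\pi_\omega^\sharp\comp\rho^*$ annihilates $N^*Y$ there (this is the paper's ``$Y$ is a union of symplectic leaves''), and the second-order vanishing of $\pi_{\mathbb{J}}(dx_a,dx_b)$ kills the fibrewise bracket $d\bigl(\pi_{\mathbb{J}}(\alpha,\beta)\bigr)|_Y$ (this is the paper's ``$\pi_{\mathbb{J}}$ lifts to $\Gamma(\wedge^2\elli)$, hence is tangent to the strata'', with the bookkeeping spelled out).

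The generalized Poisson step, however --- the one you yourself identified as the crux --- is wrong. The identification $L=\mathfrak{F}_{\rho}(\mathrm{graph}(-i\omega))$ holds (up to a $B$-field) only over $M\setminus D$; along $D$ the eigenbundle of $\mathbb{J}$ is the \emph{smooth extension} of that pushforward, not the fibrewise pushforward itself. Concretely, at a point $p\in D(2)\subset\cc^2$ one has $\rho_p=0$ and $\rho_p^*=0$, so $\mathfrak{F}_{\rho}(\mathrm{graph}(-i\omega))_p=T^*_{\cc,p}M$, which equals its own conjugate and therefore cannot be the $+i$-eigenbundle of any generalized complex structure. In the paper's own Example \ref{ex:c2}, $\rho=z_1z_2+dz_1\wedge dz_2$ on $\cc^2$ with $Y=D(2)=\set{0}$, the spinor at the origin is $dz_1\wedge dz_2$, so $\mathbb{J}$ has complex type there: $J$ is invertible and $\mathrm{pr}_{T_{\cc}M}(L_0)=T^{0,1}_0\cc^2\neq \set{0}=T_{\cc}Y$. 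Thus both your premise $\mathrm{pr}_{T_\cc M}(L)\subseteq T_\cc Y$ and your conclusion $J(T_pM)\subseteq T_pY$ fail; indeed the latter is internally inconsistent, since together with $\pi_{\mathbb{J}}^\sharp N^*Y=0$ it would give $\mathbb{J}\xi=\pi_{\mathbb{J}}^\sharp\xi-J^*\xi=0$ for $\xi\in N^*_pY$, contradicting $\mathbb{J}^2=-1$. What you actually need is only the much weaker statement $J(TY)\subseteq TY$, and the paper obtains it by a different route: near any point of $Y$ the stable structure is, up to gauge equivalence, induced by a holomorphic log-symplectic Poisson structure, and $Y$ is locally the common zero set of some of the complex coordinates, hence a holomorphic Poisson submanifold and therefore generalized Poisson; since $B$-field transformations fix covectors and $\pi_{\mathbb{J}}^\sharp N^*Y=0$, the condition $\mathbb{J}N^*Y=N^*Y$ is gauge-invariant and pointwise, so it globalizes. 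Your proof needs this (or some other argument that sees the extension of $L$ across $D$, rather than the fibrewise pushforward) to be complete.
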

\begin{proof}
First we note that $Y$ is a union of symplectic leaves of the underlying Poisson structure $\pi_{\mathbb{J}}$, and hence a Poisson submanifold. For every $p \in Y$, there exists a neighbourhood $U$, with a complex structure and a holomorphic Poisson structure $Q$ inducing the generalized complex structure (up to gauge equivalence). We have that $Y \cap U$, can be written as the zero-set of some of the complex coordinates, and hence it is a complex submanifold of $U$. Therefore, $Y \cap U$ is a holomorphic Poisson submanifold with respect to the holomorphic Poisson structure $Q$, and consequently a generalized Poisson submanifold of $\mathbb{J}$ on $U$. As being a generalized Poisson submanifold is a point-wise condition, this shows that $Y$ is a generalized Poisson submanifold of $M$. 

Using a tubular neighbourhood of $Y$, we may transport $\pi_{\mathbb{J}}$ to a Poisson structure on $NY$ which we will still denote in the same fashion. The Lie algebra structure on $N^*Y$ is then defined by

\begin{equation*}
[s,t] := \pi_{\mathbb{J}}(d\hat{s},d\hat{t}), \text{for } s,t \in \Gamma(N^*Y),
\end{equation*}
where $\hat{s},\hat{t}$ denotes the fibre-wise linear function associated to the section. Or, $[s,t] = \pi^{\sharp}_{\mathbb{J}}(d\hat{s})(d\hat{t})$. 

Because $\pi_{\mathbb{J}}$ is an elliptic symplectic structure it lifts to an element of $\Gamma(\wedge^2\elli)$. In particular, this shows that the image of $\pi_{\mathbb{J}}^{\sharp}$ lands in $\elli$. But as $Y$ is contained in a stratum of the divisor $D$, this in particular implies that $\im \pi_{\mathbb{J}}^{\sharp} \cap NY = \set{0}$. This shows that the Lie algebra structure on $N^*Y$ is trivial.
\end{proof}
We conclude that irreducible components of the degeneracy locus of a stable generalized complex structure may be blown-up in a generalized complex fashion.

\subsection{Blowing up for the elliptic tangent bundle}
There is yet another explanation why one can blow up irreducible components of the degeneracy locus of a stable generalized complex structure. Namely, the elliptic tangent bundle is very well-behaved under blow-ups. To state a result independent of a generalized complex structure, we need that the holomorphic ideal is adapted to the elliptic ideal:
\begin{definition}
Let $M^n$ be endowed with an elliptic divisor $I_{\abs{D}}$, and $Y^{n-2l}$ a closed embedded submanifold. We say that a holomorphic ideal $I_Y$ for $Y$ is \textbf{adapted} to $I_{\abs{D}}$ if there exists local coordinates $z_1,\ldots,z_k,x_{2k+1},\ldots,x_{n-2k}$ for which $z_1,\ldots,z_l$ are as in Definition \ref{def:holideal} and $I_{\abs{D}}|_U = \inp{\abs{z_1}^2\cdot \ldots \cdot \abs{z_k}^2}$.
\end{definition}
Note that this definition in particular implies that $Y \subset D$. Given such a holomorphic ideal, we may blow-up the elliptic divisor to obtain:

\begin{theorem}\label{th:blowupelliptic}
Let $(M,I_{\abs{D}})$ be a manifold endowed with an elliptic divisor, and let $Y \subset D(k)$ be a irreducible component. Assume that we are given a holomorphic ideal $I_Y$ for $Y$ adapted to $I_{\abs{D}}$. Then there exists an elliptic ideal $I_{\abs{\tilde{D}}}$ on the blow-up $\tilde{M}$ such that the blow-down map induces a fibre-wise isomorphism: 
\begin{align*}
p_* : (\mathcal{A}_{\abs{\tilde{D}}})_x \overset{\simeq}{\longrightarrow} (\elli)_{p(x)},
\end{align*}
for all $x \in \tilde{M}$.
\end{theorem}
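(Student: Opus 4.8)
The plan is to prove the statement purely locally, since both the blow-up construction and the elliptic tangent bundle are defined by local normal forms, and the assertion that $p_*$ is a fibrewise isomorphism can be checked in a neighbourhood of each point of $\tilde M$. Away from the exceptional divisor $p$ is a diffeomorphism and there is nothing to prove, so I would fix a point $y\in Y$, pass to the adapted coordinates $z_1,\dots,z_k,x_{2k+1},\dots,x_n$ guaranteed by the hypothesis, in which $Y\cap U=\{z_1=\cdots=z_l=0\}$ and $I_{\abs{D}}|_U=\inp{\abs{z_1}^2\cdots\abs{z_k}^2}$. The blow-up $\tilde M$ in these coordinates is the standard complex blow-up along $\{z_1=\cdots=z_l=0\}$, covered by the usual charts, say the $j$-th chart with coordinates $w_1,\dots,w_l$ where $z_j=w_j$ and $z_i=w_iw_j$ for $i\neq j$ (together with the unchanged $z_{l+1},\dots,z_k,x$). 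The first key step is to identify, in each such chart, the pullback ideal $p^*I_{\abs{D}}$ and to verify that it is again an \emph{elliptic} divisor, thereby defining $I_{\abs{\tilde D}}$; since $\abs{z_i}^2=\abs{w_i}^2\abs{w_j}^2$ for $i\neq l$ and $\abs{z_j}^2=\abs{w_j}^2$, the product $\abs{z_1}^2\cdots\abs{z_k}^2$ pulls back to $\abs{w_j}^{2l}\,\abs{w_1}^2\cdots\widehat{\abs{w_j}^2}\cdots\abs{w_l}^2\cdot\abs{z_{l+1}}^2\cdots\abs{z_k}^2$, which after reading off the radial factors is visibly of elliptic normal form $\inp{\tilde r_1^2\cdots\tilde r_k^2}$ up to a nonvanishing factor, so $I_{\abs{\tilde D}}$ is a genuine elliptic divisor with vanishing locus $\tilde D=p^{-1}(D)$.

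Having fixed $I_{\abs{\tilde D}}$, the heart of the proof is the fibrewise isomorphism. I would use the local generators from Remark \ref{rem:locform}: $\elli$ over the target is generated by $r_i\partial_{r_i},\partial_{\theta_i}$ (for the elliptic directions) and the honest $\partial_{x}$'s, and $\mathcal{A}_{\abs{\tilde D}}$ over the source is generated by the analogous $\tilde r_i\partial_{\tilde r_i},\partial_{\tilde\theta_i},\partial_{x}$. The plan is to compute $p_*$ on each of these generators in the blow-up chart and check that the images form a local frame of $\elli$. In the holomorphic language this is cleanest: $r_i\partial_{r_i}$ corresponds to the real part and $\partial_{\theta_i}$ to (a multiple of) the imaginary part of $z_i\partial_{z_i}$, so it suffices to track how the complexified generators $z_i\partial_{z_i}$ behave under the coordinate change $z_j=w_j$, $z_i=w_iw_j$. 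A direct chain-rule computation gives $w_j\partial_{w_j}=z_j\partial_{z_j}+\sum_{i\neq j}z_i\partial_{z_i}$ and $w_i\partial_{w_i}=z_i\partial_{z_i}$ for $i\neq j$; both right-hand sides are $\cc$-linear combinations of the $z_i\partial_{z_i}$ with constant (indeed integer) coefficient matrix of determinant $\pm1$, and the remaining generators $\partial_{z_{l+1}},\dots,\partial_x$ are unchanged. Taking real and imaginary parts shows $p_*$ sends the frame $\{\tilde r_i\partial_{\tilde r_i},\partial_{\tilde\theta_i},\dots\}$ to the frame $\{r_i\partial_{r_i},\partial_{\theta_i},\dots\}$ via an invertible constant matrix, which is exactly the asserted fibrewise isomorphism $(\mathcal{A}_{\abs{\tilde D}})_x\overset{\simeq}{\to}(\elli)_{p(x)}$.

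I expect the main obstacle to be bookkeeping rather than conceptual: one must confirm that the logarithmic/elliptic vector fields really do map to logarithmic/elliptic vector fields under the blow-up coordinate change \emph{on the exceptional divisor itself}, where the anchor drops rank and the naive pushforward of an ordinary vector field would blow up. The point, which the computation above makes precise, is that the singular generators $z_i\partial_{z_i}$ are exactly the ones that transform \emph{linearly} with bounded coefficients under the toric change of charts — this is the structural reason elliptic (equivalently complex-log) tangent bundles behave well under blow-up, and it is what fails for $TM$ (compare the non-extending expression $\partial_{z_1}\mapsto\frac{z_2}{z_1}\partial_{z_2}$ appearing in the earlier example). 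A secondary technical point is to check chart-independence: the generators and the ideal $I_{\abs{\tilde D}}$ produced in the $j$-th chart must agree on overlaps so that $I_{\abs{\tilde D}}$ is globally well defined and $p_*$ is a globally defined Lie algebroid morphism; this follows because the transition functions of the blow-up are again toric monomial changes of coordinates, under which the same linear-transformation argument applies. Finally I would remark that since $p_*$ is a fibrewise isomorphism commuting with anchors and brackets, it is automatically a Lie algebroid submersion in the sense of the introduction, recovering the version of the theorem stated there.
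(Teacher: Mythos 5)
Your overall strategy is the same as the paper's: pass to the standard toric blow-up charts and verify everything by an explicit monomial computation. The one genuine difference is the certificate for the fibrewise isomorphism: the paper pulls back an elliptic volume form $d\log r_1\wedge d\theta_1\wedge\cdots\wedge dx_m$ and checks it remains a volume form upstairs, whereas you push forward the frame $\set{w_i\partial_{w_i}}$ and check that the coefficient matrix is unipotent. These verifications are equivalent (one is the coframe version of the other), and your chain-rule identities $w_j\partial_{w_j}=z_j\partial_{z_j}+\sum_{i\neq j}z_i\partial_{z_i}$, $w_i\partial_{w_i}=z_i\partial_{z_i}$ are correct.

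There is, however, one concrete error in your first step. In the $j$-th chart the pullback ideal $p^*I_{\abs{D}}$ is generated by $\abs{w_j}^{2l}\,\abs{w_1}^2\cdots\widehat{\abs{w_j}^2}\cdots\abs{w_l}^2\,\abs{z_{l+1}}^2\cdots\abs{z_k}^2$, and the excess factor $\abs{w_j}^{2(l-1)}$ is \emph{not} nonvanishing: it vanishes on the exceptional divisor. Since the elliptic normal form $\inp{r_1^2\cdots r_k^2}$ requires each quadratic factor to appear with multiplicity exactly one (e.g.\ $\abs{w_j}^2\abs{w_1}^2\notin\inp{\abs{w_j}^4\abs{w_1}^2}$, as the quotient is unbounded), the ideal $p^*I_{\abs{D}}$ is \emph{not} an elliptic divisor whenever $l\geq 2$, which is the only nontrivial case ($l=1$ gives a trivial blow-up). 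So setting $I_{\abs{\tilde D}}:=p^*I_{\abs{D}}$, as your phrase ``up to a nonvanishing factor'' effectively does, fails; one must instead define $I_{\abs{\tilde D}}:=\sqrt{p^*I_{\abs{D}}}$, reducing the multiplicity of the exceptional factor, which is exactly what the paper does. With that corrected definition your frame computation goes through verbatim, but the correction carries a small obligation you currently skip: the Lie algebroid $\mathcal{A}_{\abs{\tilde D}}$ is defined by the \emph{reduced} ideal while the blow-down naturally interacts with the pullback ideal, so one should note (as the paper does) that a vector field preserves $p^*I_{\abs{D}}$ if and only if it preserves $\sqrt{p^*I_{\abs{D}}}$, or else rely purely on your generator computation in each chart. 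A side benefit of the radical definition is that $I_{\abs{\tilde D}}$ is then manifestly global and chart-independent, so the chart-compatibility check you flag at the end becomes unnecessary.
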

\begin{proof}
To study the blow-down map we need to use local coordinates describing it. Given $l\geq k$ let $x \in Y \cap D[l]$, let $U = \cc^l \times \rr^m$ be a local chart with coordinates $(z_1,\ldots,z_l,x_1,\ldots,x_m)$, with the $z_i$ as in Definition \ref{def:holideal}. Let $\tilde{U} = \tilde{\cc}^l \times \rr^m$, and $p = (p',\text{Id}) : \tilde{U} \rightarrow U$ be the blow-down map. The manifold $\tilde{\cc}^l$ has a cover of $l$-coordinate charts given by 
\begin{align*}
(v_1,\ldots,v_{i-1},z_i,v_{i+1},\ldots,v_l) &\leftrightarrow (z_i \cdot (v_1,\ldots,v_{i-1},1,v_{i+1},\ldots,v_l),\\
&[v_1:\ldots:v_{i-1}:1:v_{i+1}:\ldots:v_l]).
\end{align*}
In these coordinate charts the blow-down map is given by
\begin{align*}
p' : (v_1,\ldots,v_{i-1},z_i,v_{i+1},\ldots,v_l) &\mapsto (z_iv_1,\ldots,z_iv_{i-1},z_i,z_iv_{i+1},\ldots,z_iv_l).
\end{align*}
With these coordinates at hand, we can consider the local form of $p^*I_{\abs{D}}$. Note that this does not provide an elliptic divisor yet, however $I_{\abs{D}} := \sqrt{p^*I_{\abs{D}}}$ does. Because a vector field preserves $p^*I_{\abs{D}}$ if and only if it preserves $\sqrt{p^*I_{\abs{D}}}$ we find that the blow-down map induces a Lie algebroid map $p_* : \mathcal{A}_{\abs{\tilde{D}}} \rightarrow \elli$.

To check that the blow-down map $p$ induces a fibre-wise isomorphism, it suffices to show that it pulls-back a volume element $(\elli)_{p(x)}$ to a volume element of $(\mathcal{A}_{\abs{\tilde{D}}})_x$. In the above coordinates, the following would define a local volume form on $(\elli)_x$:
\begin{align*}
\Omega = d\log r_1 \wedge d\theta_1 \wedge \cdots \wedge d\log r_l \wedge d\theta_l \wedge dx_1 \wedge \cdots \wedge dx_m.
\end{align*}
Therefore
\begin{align*}
p^*\Omega = d\log \tilde{r}_1 \wedge d\hat{\theta}_1 \cdots \wedge d\log \tilde{r}_l \wedge d\hat{\theta}_l \wedge d\log r_{l+1} \wedge d\theta_{l+1} \wedge \cdots d\log r_k \wedge d\theta_k \wedge  dx_1 \wedge \cdots dx_m,
\end{align*}
where the $(\tilde{r}_j,\hat{\theta}_j)$ are the polar coordinates corresponding to $(v_1,\ldots,v_{i-1},z_i,v_{i+1},\ldots,v_l)$. We conclude that $p$ indeed induces the fibrewise isomorphism as required.
\end{proof}

\begin{example}[Blowing up points]\label{ex:points}
Let $M$ be an orientable manifold and let $Y = \set{p}$ be a point. Then we can always find complex coordinates around $p$, $z_1,\ldots,z_l$, such that $I_Y = \inp{z_1,\ldots,z_l}$ is a holomorphic ideal. The topology of the blow-up will depend on the parity of $l$, and whether the coordinates $z_1,\ldots,z_l$ are orientation preserving. If $l$ is odd, the blow-up will be $M\# \mathbb{C}P^l$, if $l$ is even it will be $M\# \bar{\mathbb{C}P^l}$ if the coordinates are oriented and $M\#\mathbb{C}P^l$ if they are not.
\end{example}

\begin{example}[Global normal crossing]
Let $I_{\abs{D}}$ be a global normal crossing divisor, and let $Y \subset D(l)$ be an irreducible component. We have that $Y$ must then necessarily be of the form $D_1\cap \cdots \cap D_l$, for some of the components of $D$. If $D[1]$ is co-orientable, it follows that each $D_i$ is co-orientable as well, and one can show that there exists a complex structure on $ND_i$, inducing the elliptic divisor (see \cite{CKW20}). Combining these complex structures to a complex structure on $NY$, gives a holomorphic ideal for $Y$ which is compatible with $I_{\abs{D}}$.
\end{example}


Given a generalized complex structure on any of the Lie algebroids $\elli$ may lift it to a structure of the same kind on the blown-up Lie algebroid. But similarly as for $T$-duality, it might happen that said structure does not descend to give a generalized complex structure on $\tilde{M}$. However, when the holomorphic ideal is induced by the generalized complex structure, this will always be the case:
\begin{lemma}
Let $(M,\mathbb{J})$ be a stable generalized complex manifold, and let $Y \subset D(k)$ be an irreducible component. Let $\omega \in \Omega^2(\elli)$ denote the elliptic symplectic structure corresponding to $\mathbb{J}$. Let $I_Y$ be the canonical holomorphic ideal for $Y$. Then this ideal is adapted to $I_{\abs{D}}$, and the blow-up $p^*\omega$ will correspond to the generalized complex blow-up of $\mathbb{J}$ as in Lemma \ref{lem:stableblow}.
\end{lemma}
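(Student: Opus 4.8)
The plan is to reduce to the local holomorphic Poisson model of a stable structure, apply Theorem~\ref{th:blowupelliptic} to pull $\omega$ back, and close with a density comparison against the blow-up of \cite{BCD16}.

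First I would check that the canonical holomorphic ideal $I_Y$ is adapted to $I_{\abs{D}}$. Around any point of $Y\subset D(k)$, stability of $\mathbb{J}$ gives, up to gauge equivalence, complex coordinates $z_1,\ldots,z_n$ and a holomorphic Poisson structure $Q$ inducing $\mathbb{J}$, in which $D=\set{z_1\cdots z_k=0}$ and, as in the proof of Lemma~\ref{lem:stableblow}, $Y=\set{z_1=\cdots=z_k=0}$. A gauge transformation by a closed two-form $B$ fixes $T^*M\subset \mathbb{T}M$ pointwise and, since $\mathbb{J}\,N^*Y=N^*Y$, commutes with $\mathbb{J}|_{N^*Y}$; hence the complex structure that $\mathbb{J}$ induces on $NY$ is the standard holomorphic one, so $I_Y=\inp{z_1,\ldots,z_k}$. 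As $I_{\abs{D}}=\inp{\abs{z_1}^2\cdots\abs{z_k}^2}$ in these same coordinates, this is exactly the adaptedness condition with $l=k$.

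Adaptedness lets me invoke Theorem~\ref{th:blowupelliptic}: there is an elliptic divisor $I_{\abs{\tilde D}}$ on $\tilde M$ for which the blow-down induces a fibre-wise isomorphism $p_*\colon \elliti\to\elli$. Consequently $p^*\omega\in\Omega^2(\elliti)$ is closed and, being the pullback of the non-degenerate $\omega$ along a fibre-wise isomorphism, is itself non-degenerate; thus $p^*\omega$ is a smooth elliptic symplectic form. On the other hand, Lemma~\ref{lem:stableblow} shows the fibre-wise Lie algebra on $N^*Y$ is trivial, hence degenerate, so the relevant theorem of \cite{BCD16} produces the generalized complex blow-up $\tilde{\mathbb{J}}$ and guarantees that the blow-down is a generalized holomorphic map for $(\mathbb{J},\tilde{\mathbb{J}})$. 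I would then record that $\tilde{\mathbb{J}}$ is again stable, with degeneracy locus the total transform $\tilde D$, so that the correspondence of \cite{CKW20} assigns to it a smooth elliptic symplectic form $\tilde\omega\in\Omega^2(\elliti)$.

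It remains to prove $\tilde\omega=p^*\omega$. Over the dense open set $\tilde M\setminus E$, where $E=p^{-1}(Y)$ is the exceptional divisor, $p$ is a diffeomorphism and a generalized holomorphic map, so $p_*$ intertwines $\mathbb{J}$ with $\tilde{\mathbb{J}}$ and therefore $\omega$ with $\tilde\omega$; hence $\tilde\omega=p^*\omega$ on $\tilde M\setminus E$. Since both are smooth sections of $\wedge^2\elliti^*$ over all of $\tilde M$ and agree on a dense set, they coincide everywhere, which is the assertion. I expect the main obstacle to be precisely the smoothness and stability claim of the previous paragraph: because $E$ has real codimension two and its generic points are not limits of the proper transform of $D$, the agreement on $\tilde M\setminus E$ does not directly control the behaviour along $E$, so one must genuinely verify that the blow-up structure extends smoothly across $E$ as a stable structure. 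This is what the fibre-wise isomorphism of Theorem~\ref{th:blowupelliptic} supplies on the $p^*\omega$ side—the volume computation there already exhibits $p^*\omega$ as a bona fide elliptic symplectic form across $E$—and what the degeneracy criterion of \cite{BCD16} supplies on the $\tilde{\mathbb{J}}$ side; reconciling the two descriptions across $E$ is the crux of the argument.
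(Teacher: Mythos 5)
The paper states this lemma without proof, so there is no argument of record to compare against; judging your proposal on its own merits, it contains a genuine gap, and you in fact name it yourself. The claim that the generalized complex blow-up $\tilde{\mathbb{J}}$ of \cite{BCD16} is again \emph{stable}, with vanishing locus the total transform $\tilde D$ and associated elliptic divisor equal to $I_{\abs{\tilde D}}=\sqrt{p^*I_{\abs{D}}}$, is only ``recorded'', never established -- and your closing paragraph concedes that reconciling the two descriptions across $E$ is ``the crux''. This claim is not bookkeeping: it is essentially the whole content of the lemma. Without it, $\tilde\omega$ is not defined as a section of $\wedge^2\elliti^{*}$, so your density comparison cannot even be set up; everything you do prove (adaptedness, and that two smooth sections of $\wedge^2\elliti^{*}$ agreeing on the dense set $\tilde M\setminus E$ agree everywhere) is the easy part. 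The workable route runs in the opposite direction: show directly that $p^*\omega$ satisfies the conditions of the correspondence of \cite{CKW20}, namely co-orientability of $\tilde D[1]$, vanishing elliptic residue along $E$, and the mixed residue identities at points of $\tilde D(2)$. These follow from the pushforward formulas implicit in the proof of Theorem \ref{th:blowupelliptic}: in the surface model one has $p_*(z_1\partial_{z_1})=w_1\partial_{w_1}+w_2\partial_{w_2}$ and $p_*(v\partial_v)=w_2\partial_{w_2}$, so that along $E$ the residue $\Res_q(p^*\omega)$ equals $\Res_q^{(1)}\omega+\Res_q^{(2)}\omega+\bigl(\Res_{r_1\theta_2}\omega-\Res_{\theta_1 r_2}\omega\bigr)$ evaluated on $Y$, which vanishes precisely by the stability conditions satisfied by $\omega$. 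This exhibits $p^*\omega$ as corresponding to a stable structure $\tilde{\mathbb{J}}'$ on $\tilde M$ for which $p$ is generalized holomorphic (holomorphy holds on $\tilde M\setminus E$ by construction and is a closed condition on the honest bundle $\mathbb{T}\tilde M$), and uniqueness of such a structure -- again by density, now on $\mathbb{T}\tilde M$ -- identifies $\tilde{\mathbb{J}}'$ with the blow-up of \cite{BCD16}.

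There is also a secondary flaw in your adaptedness step: you infer $I_Y=\inp{z_1,\ldots,z_k}$ from the fact that the complex structure induced on $NY$ is the standard one. That inference is invalid, because a holomorphic ideal is not determined by the complex structure it induces on the normal bundle: on $\cc$ with $Y=\set{0}$, the ideals $\inp{z}$ and $\inp{z+\bar z^2}$ induce the same complex structure on $N^*Y$ but are distinct (one checks $\bar z^2\notin\inp{z}$). What you actually need is that the canonical ideal construction of \cite{BCD16} is gauge-natural, so that it may be computed in the local holomorphic Poisson gauge, where it is indeed $\inp{z_1,\ldots,z_k}$; your gauge-invariance observation for $\mathbb{J}|_{N^*Y}$ is the right ingredient but must be applied to the ideal itself, not to its first-order shadow. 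Finally, adaptedness must be verified at every point of $Y$, including points of deeper strata $D[l]$ with $l>k$, where the same argument applies but with $l$ branches and $Y$ the intersection of $k$ of them; as written you only treat the generic stratum.
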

We conclude that the point of view of blowing-up using the elliptic tangent bundle results in the same blow-up as in \cite{BCD16}. However, Theorem \ref{th:blowupelliptic} can be used in a larger context: for instance, elliptic symplectic forms not corresponding to generalized complex structures may be blown-up using this result.

\subsubsection{Complex structures on the blow-up}
Let $(M,I_{\abs{D}})$ is a manifold endowed with an elliptic divisor, and $\mathbb{J}$ is a generalized complex structure on $M$, which lifts to $\elli$. As pointed out before, if we are given $Y \subset D(k)$ an irreducible component, and $I_Y$ a holomorphic ideal, then the blow-up of $\mathbb{J}$ induces a generalized complex structure on $\elliti$, however this needn't descend to a generalized complex structure on $\tilde{M}$. The following example shows this can already happen with complex structures:

\begin{example}[Complex structures on $n\mathbb{C}P^2 \# m\bar{\mathbb{C}P^2}$]\label{ex:complexstrucs}
Consider the standard complex structure $J$ on $\mathbb{C}P^2$, and $\mathcal{O}(3)$ and the section $s$ corresponding to the polynomial $z_0z_1z_2$. Then $I_D := s^*(\Gamma(\mathcal{O}(3))) \subset C^{\infty}(\mathbb{C}P^2,\cc)$ defines a complex log divisor with degeneracy locus $D = \set{[0:z_1:z_2]} \cup \set{[z_0:0:z_2]} \cup \set{[z_0:z_1:0]}$, and let $I_{\abs{D}}$ be the associated elliptic divisor. As $D$ is a complex submanifold of $\mathbb{C}P^2$, $J$ will preserve the log tangent bundle $A_D$, and consequently induce a complex structure $J_{\elli}$ on $\elli \rightarrow \cc P^2$.

We may also consider $\bar{\cc P^2}$, with the same elliptic divisor $I_{\abs{D}}$. Although this is not an almost complex manifold, it does admit a complex structure on $\elli$. Indeed, one may show that in affine coordinates $(w_1,w_2)$, the spinor $\rho = d\log w_1 \wedge d\log \bar{w}_2$ gives rise to a global complex structure on $\elli \rightarrow \bar{\cc P^2}$.

Now, let $Y = \set{{\rm pt}} \subset \cc P^2$ be any of the vertices of $D$, and pick a holomorphic ideal for $Y$. As described in Example \ref{ex:points}, the corresponding total space will be either $\cc P^2\# \cc P^2$ or $\cc P^2 \# \bar{\cc P^2}$. In either case, we obtain a complex structure $J_{\elliti}$ on the blown-up Lie algebroid $\A_{\abs{\tilde{D}}}$. 

On $\cc P^2 \# \bar{\cc P^2}$, the blow-up coincides with the usual complex blow-up and $J_{\elliti}$ is simply a lift of the complex structure on $\cc P^2 \# \bar{\cc P^2}$. However, $\cc P^2\# \cc P^2$ is not even an almost complex manifold, and hence the induced complex structure on $J_{\elliti}$ does not descend to a (generalized) complex structure on $\cc P^2\# \cc P^2$.
Continuing inductively, we obtain complex structures on elliptic tangent bundles over $n \cc P^2 m \#\cc P^2$, for all $n,m \in \mathbb{N}$.
\end{example}

\subsection{$T$-duality and blow-up}
We now study how $T$-duality interacts which blow-ups. First we recall that toric actions are preserved under blow-ups:
\begin{lemma}
If $M^{2n}$ admits a standard $T^n$ action with quotient map $\pi\colon M \to B$, and $x \in M$ is a fixed point, let  $\tilde{M}$ either blow up of $M$ at $p$, that is $\tilde{M} = M\#\cc P^n$ or $M\#\bar{\cc P^n}$, with blow-down map $q\colon \tilde{M} \to M$. Then $\tilde{M}$ admits a  standard $T^n$-action for which the $q$ is equivariant, and the orbit space on $\tilde{M}$ is $\tilde{B}$, the real blow-up of $B$ at $\pi(x)$. These maps are related by the following diagram
\[\xymatrix{\tilde{M} \ar[r]^{q}\ar[d]^{\tilde\pi} & M\ar[d]^{\pi} \\
\tilde{B} \ar[r]& B}\]
\end{lemma}

The fact that the blow-down map induces a fibre-wise isomorphism between elliptic tangent bundles immediately implies the following:
\begin{proposition}[$T$-duality commutes with blow-up]\label{prop:blow-upT}
Let $M^{2n}$ and $\hat{M}^{2n}$ be endowed with standard $T^n$-actions over the same base $B$, and let $I_{\abs{D}},I_{\abs{\hat{D}}}$ denote the induced elliptic divisors. Let $H \in \Omega^3_{\rm cl}(\ellio), \hat{H} \in \Omega^3_{\rm cl}(\ellit)$, and $F\in \Omega^2(\ellio \times_B \ellit)$ provide a $T$-dual between $(M,H)$ and $(\hat{M},\hat{H})$.

Let $x\in M, \hat{x} \in \hat{M}$ be fixed points for the action with $\pi(x) = \hat{\pi}(\hat{x})$, and let $\tilde{M}$ and $\tilde{\hat{M}}$ denote either blow-up of $M$ and $\hat{M}$. Let $q : \tilde{M} \rightarrow M$, $\hat{q} : \tilde{\hat{M}} \rightarrow \hat{M}$ denote the blow-down maps. Then we have a canonical $T$-duality between the blow-ups:
\begin{center}
\begin{tikzcd}
& (\tilde{M}\times_{\tilde{B}} \tilde{\hat{M} }),p^*q^*H-\hat{p}^*\hat{q}^*\hat{H}) \ar[ld,"p"],\ar[rd,"\hat{p}"] &\\
(\tilde{M},q^*H) \ar[rd,"\tilde{\pi}"] & & (\tilde{\hat{M}},\hat{q}^*\hat{H}) \ar[ld,"\tilde{\hat{\pi}}"]\\
& \tilde{B} & 
\end{tikzcd}
\end{center}
Consequently, the following diagram commutes:
\begin{center}
\begin{tikzcd}
\Omega^{\bullet}(\mathcal{A}_{\abs{\tilde{D}}}) \ar[r,"\tilde{\tau}"] & \Omega^{\bullet}(\mathcal{A}_{\abs{\tilde{\hat{D}}}}) \\
\Omega^{\bullet}(\mathcal{A}_{\abs{D}}) \ar[r,"\tau"] \ar[u,"q^*"] & \Omega^{\bullet}(\mathcal{A}_{\abs{\hat{D}}}) \ar[u,"\hat{q}^*"]
\end{tikzcd}
\end{center}

\end{proposition}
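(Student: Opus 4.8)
The proof rests entirely on Theorem \ref{th:blowupelliptic}: at the level of elliptic tangent bundles a blow-down map is a fibre-wise isomorphism, so blowing up a fixed point is invisible to the elliptic geometry. The plan is to produce the $T$-duality datum on the blown-up correspondence space by pulling $F$ back along a blow-down map, and then to verify the two defining conditions of $T$-duality using precisely this invisibility. First I would fix the base. A fixed point $x$ of a standard $T^n$-action on $M^{2n}$ projects to a codimension-$n$ corner (vertex) $b := \pi(x)$ of the manifold with corners $B$, and likewise for $\hat{x}$; taking $\hat{x}$ over the same vertex $b$ is what makes the construction canonical and keeps the two blow-ups over a common base. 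Blowing up the vertex $b$ produces a manifold with corners $\tilde{B}$ with a map $\tilde{B}\to B$, and by the preceding lemma both $M\# X$ and $\hat{M}\# Y$ carry standard $T^n$-actions with quotient $\tilde{B}$ and equivariant blow-down maps $q,\hat{q}$. Writing $C := M\times_B\hat{M}$ and $C' := (M\# X)\times_{\tilde{B}}(\hat{M}\# Y)$ for the two correspondence spaces, with projections $p_0,\hat{p}_0$ on $C$ and $p,\hat{p}$ on $C'$, the universal property of the blow-up yields a blow-down map $P\colon C'\to C$ fitting into commuting squares $q\comp p = p_0\comp P$ and $\hat{q}\comp \hat{p} = \hat{p}_0 \comp P$.

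With these maps in place I would set $\tilde{F} := P^* F$ and verify the two $T$-duality conditions. The curvature condition is formal: using $dF = p_0^*H - \hat{p}_0^*\hat{H}$ and the two commuting squares,
\begin{equation*}
d\tilde{F} = P^*dF = (p_0\comp P)^*H - (\hat{p}_0\comp P)^*\hat{H} = p^*q^*H - \hat{p}^*\hat{q}^*\hat{H},
\end{equation*}
which is exactly the required three-form on $C'$. For non-degeneracy I would invoke Theorem \ref{th:blowupelliptic} in the form that $q$ (respectively $\hat{q}$) identifies the vertical directions $\mathfrak{t}_{M\# X}$ of the torus action with $\mathfrak{t}_M$ (respectively $\mathfrak{t}_{\hat{M}\# Y}$ with $\mathfrak{t}_{\hat{M}}$); under this identification $\tilde{F}$ restricts on $\mathfrak{t}_M\otimes \mathfrak{t}_{\hat{M}}$ to $F$, so non-degeneracy of $F$ there transfers directly to $\tilde{F}$.

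The one point requiring genuine work — and the main obstacle — is to promote Theorem \ref{th:blowupelliptic}, which is stated for a single elliptic tangent bundle over a smooth manifold, to the adapted Lie algebroid $\ellio\times_B\ellit$ living over the \emph{singular} correspondence space $C$. Concretely I would show that $C'$ is the blow-up of $C$ along the locus lying over the truncated vertex and that $P$ induces a fibre-wise isomorphism $\elliti\times_{\tilde{B}}\mathcal{A}_{\abs{\tilde{\hat{D}}}} \to \ellio\times_B\ellit$. This I would carry out in the product charts $(\locm)^l\times\rr^k$ from Section \ref{sec:singgeometry}, repeating the volume-element computation of the proof of Theorem \ref{th:blowupelliptic} in these coordinates: the $r$- and $\theta$-directions split off exactly as before, while the extra torus angles $\hat{\theta}_i$ are untouched by the blow-down, so the pullback of an elliptic volume form is again an elliptic volume form. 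Granting this, the torus generators correspond under the isomorphism, which is precisely what the non-degeneracy argument above invoked.

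Finally, the commutativity of the $\tau$-diagram is a formal consequence. The map $\tilde{\tau}$ is built from $e^{\tilde{F}}=e^{P^*F}$, the infinitesimal generators $\tilde{X}_i\in\Gamma(\elliti)$ (which satisfy $q_*\tilde{X}_i = X_i$ by equivariance), and push-forward along $\hat{p}$; naturality of pullback and interior product together with the commuting blow-down squares then give $\tilde{\tau}\comp q^* = \hat{q}^*\comp\tau$, which is the asserted commuting square and completes the proof.
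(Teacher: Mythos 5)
Your proposal is correct and follows essentially the same route as the paper: invoke Theorem \ref{th:blowupelliptic} to get fibre-wise isomorphisms of elliptic tangent bundles, use them to induce a blow-down map between the correspondence spaces, and pull back $F$ along it to obtain the $T$-duality datum upstairs. In fact your write-up is more careful than the paper's own proof, which asserts in one line the step you correctly flag as the real content (that the fibre-wise isomorphism persists for the adapted algebroid $\ellio\times_B\ellit$ over the singular correspondence space) and omits the curvature, non-degeneracy, and $\tau$-diagram verifications that you spell out.
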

\begin{proof}
By Theorem \ref{th:blowupelliptic} $q$ and $\hat{q}$ induce fibre-wise isomorphisms between elliptic tangent bundles. Let $Q : \tilde{M}\times_{\tilde{B}} \tilde{\hat{M}} \rightarrow M \times_B \hat{M}$ denote the induced map between the correspondence spaces. Consequently $Q$ induces a fibre-wise isomorphism between the elliptic tangent bundles on the correspondence spaces. Therefore $Q^*F$ will satisfy the equation for $T$-duality.
\end{proof}

\begin{corollary}
Let $(M,H)$, $(\hat{M},\hat{H})$ be as in the setting of Proposition \ref{prop:blow-upT}, and assume there is a generalized complex structure $\mathbb{J}$ on $(M,H)$ $T$-dual to a generalized complex structure $\hat{\mathbb{J}}$ on $(\hat{M},\hat{H})$. Then the $T$-duality from the previous Proposition \ref{prop:blow-upT} sends $\tilde{\mathbb{J}}$, to $\hat{\tilde{\mathbb{J}}}$.
\end{corollary}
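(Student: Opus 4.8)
The plan is to encode both generalized complex structures by their canonical (pure spinor) lines and then feed these into the commuting square already established in Proposition~\ref{prop:blow-upT}, so that the statement becomes a one-line consequence of that diagram.

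First I would recall how the transport of structures works on spinors. Since $\mathbb{J}$ lifts to $\elli$, it is determined by its canonical bundle $K$, the complex line of pure spinors generated locally by an invariant form $\rho$, whose $+i$-eigenbundle is $L=\{u\in(\elli\oplus\elli^{*})\otimes\cc:u\cdot\rho=0\}$. Writing $\varphi$ for the Courant algebroid isomorphism of Theorem~\ref{th:courantisoelliptic} and $\tau$ for the associated isomorphism of invariant forms (Proposition~\ref{prop:complexiso}), the compatibility $\tau(u\cdot\rho)=\varphi(u)\cdot\tau(\rho)$, which holds exactly as in the principal case, shows that $u\cdot\rho=0$ if and only if $\varphi(u)\cdot\tau(\rho)=0$. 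Hence $\tau(K)$ is the canonical bundle of the transported structure, and the hypothesis that $\mathbb{J}$ is $T$-dual to $\hat{\mathbb{J}}$ becomes precisely $\tau(K)=\hat{K}$, with $\hat{K}$ the canonical bundle of $\hat{\mathbb{J}}$. On the blow-ups I would use Theorem~\ref{th:blowupelliptic}: the blow-down map $q$ restricts to a fibre-wise isomorphism $\elliti\to q^{*}\elli$, so that $q^{*}$ is an isomorphism of the corresponding spinor bundles, and by construction $\tilde{\mathbb{J}}$ is the structure on $\elliti$ with canonical bundle $q^{*}K$; likewise $\hat{\tilde{\mathbb{J}}}$ has canonical bundle $\hat{q}^{*}\hat{K}$.

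Granting these identifications, the conclusion would be immediate: applying $\tilde{\tau}\comp q^{*}=\hat{q}^{*}\comp\tau$ from Proposition~\ref{prop:blow-upT} to a local generator $\rho$ of $K$ gives
\begin{equation*}
\tilde{\tau}(q^{*}\rho)=\hat{q}^{*}(\tau\rho),
\end{equation*}
whose right-hand side generates $\hat{q}^{*}\hat{K}$. Thus $\tilde{\tau}$ sends the canonical line $q^{*}K$ of $\tilde{\mathbb{J}}$ onto the canonical line $\hat{q}^{*}\hat{K}$ of $\hat{\tilde{\mathbb{J}}}$, and since the blown-up isomorphism $\tilde{\varphi}$ is characterised by $\tilde{\tau}(u\cdot\rho)=\tilde{\varphi}(u)\cdot\tilde{\tau}(\rho)$, repeating the argument of the first paragraph yields $\tilde{\varphi}(\tilde{\mathbb{J}})=\hat{\tilde{\mathbb{J}}}$.

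The main obstacle, and the only step that is not purely formal, is justifying the identification of the blow-up of a generalized complex structure with the $q^{*}$-pullback of its canonical line. Concretely I would need to check that the fibre-wise isomorphism of Theorem~\ref{th:blowupelliptic} carries the $+i$-eigenbundle $L$ of $\mathbb{J}$ onto the $+i$-eigenbundle of $\tilde{\mathbb{J}}$, so that $q^{*}K$ is genuinely annihilated by the latter. This is exactly where one must use that $q_{*}$ is a fibre-wise \emph{isomorphism} and not merely a submersion: it guarantees that the Clifford module structure is transported without loss, after which the verification is a routine unwinding of the definitions of the blow-up and of pure spinors.
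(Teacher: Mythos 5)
Your proof is correct, but it takes a genuinely different route from the paper's. The paper's own argument is a density-and-continuity argument: since the blow-down maps restrict to diffeomorphisms away from the exceptional locus, the blown-up $T$-duality agrees with the original one over the complement of the exceptional divisor, so the $T$-dual of $\tilde{\mathbb{J}}$ coincides with $\hat{\tilde{\mathbb{J}}}$ on that dense open set, and hence everywhere by continuity. You instead make the corollary a formal consequence of the commuting square $\tilde{\tau}\comp q^{*}=\hat{q}^{*}\comp\tau$ of Proposition \ref{prop:blow-upT}, by translating everything into canonical spinor lines: $T$-duality of structures becomes $\tau(K)=\hat{K}$, blowing up becomes pulling back the canonical line along the fibre-wise isomorphism of Theorem \ref{th:blowupelliptic}, and then $\tilde{\tau}(q^{*}K)=\hat{q}^{*}\tau(K)=\hat{q}^{*}\hat{K}$ closes the argument. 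What your approach buys is a global, purely functorial proof that never needs to restrict to the isomorphism locus or invoke a continuity extension; what it costs is precisely the dictionary you flag as the "main obstacle": one must verify that the blow-up of a lifted structure is indeed the structure whose canonical line is $q^{*}K$ (equivalently, that $q_{*}$ being a fibre-wise isomorphism makes $q^{*}$ intertwine the Clifford actions), and that the intertwining identity $\tau(u\cdot\rho)=\varphi(u)\cdot\tau(\rho)$ carries over from the principal case — both of which are true in this setting and consistent with how the paper itself treats pullbacks of structures along blow-downs, but which the paper's shorter proof sidesteps entirely by working on the dense locus where all the maps are honest diffeomorphisms.
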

\begin{proof}
As $\tilde{\hat{M}}\backslash E \simeq \hat{M}\ \set{\hat{x}}$, we have that the $T$-dual of $\tilde{\mathbb{J}}$ coincides with $\hat{\tilde{\mathbb{J}}}$ outside of $E$. By continuity they must coincide over the entirety of $\tilde{M}_2$.
\end{proof}

\section{Examples}\label{sec:examples}
In this final section we will provide explicit examples of $T$-dual geometric structures on manifolds endowed with standard torus actions. We will define some basic examples, and use these to produce more examples by blowing up.

\subsection{Basic examples}
We start by consider simple examples of generalized complex structures. The analysis heavily depends on the local description in Example \ref{ex:c2}. So recall from there that any of the following two-forms:
\begin{itemize}
\item $F_{1,\pm} = d\theta_1 \wedge d\hat{\theta}_1 \pm d\theta_2 \wedge d\hat{\theta}_2$.
\item $F_{2,\pm} = -d\theta_1 \wedge d\hat{\theta}_2 \pm d\theta_2 \wedge d\hat{\theta}_1$.
\end{itemize}
provides a $T$-duality from $(\cc,0)$ to itself.

\begin{example}[$S^1\times S^3$]
Let $T^2$ act on $S^3 \subset \cc^2$, and extend the action trivially to an action on $S^1\times S^3$. The quotient space $B$ is an annulus. Viewing $S^1\times S^3$ as the Hopf surface $(\cc^2\backslash \set{0})/\mathbb{Z}$, the stable generalized complex structure $\tilde{\rho} = z_1z_2 + dz_1 \wedge dz_2$ on $\cc^2\backslash \set{0}$ descends to a stable generalized complex structure $\rho$ on $S^1\times S^3$. All the two forms $F_{i,\pm}$ are $\mathbb{Z}$-invariant and thus descend to two-forms on $S^1\times S^3$. Therefore, as in Example \ref{ex:c2}, we obtain that $T$-duality induced by the two-form $F_{2,+}$ sends $\rho$ to the standard complex structure on $S^1\times S^3$.

If we instead consider the generalized complex structure induced by $\tilde{\rho}_2 = z_1z_2 + idz_1 \wedge dz_2$, and use the two-form induced by $F_{1,-}$, we find that $\rho_2$ is $T$-dual to itself.
\end{example}

\begin{example}[$\mathbb{C}P^2$]\label{ex:cp2}
We consider $\mathbb{C}P^2$ in the usual fashion as $(\cc^3\backslash \set{0})/\cc$ with torus action $(\lambda_1,\lambda_2)\cdot [z_0:z_1:z_2] = [e^{i\lambda_1}z_0:e^{i\lambda_2}z_1:z_2]$. Consider the complex log forms $\tilde{\zeta_1} = d\log z_0 - d\log z_2$ and $\tilde{\zeta_2} = d\log z_1 - d\log z_2$. As these are basic with respect to the $\cc^*$-action, they descend to complex log forms $\zeta_1,\zeta_2$. We let $\Theta_1 = \Im^* \zeta_1,\Theta_2 = \Im^* \zeta_2$ denote the imaginary parts. Then $(\Theta_1,\Theta_2)$ provides a connection-one form for the $T^2$-action on $\cc P^2$.

Consider another copy of $\mathbb{C}P^2$, and denote the same connection one-form by $(\hat{\Theta}_1,\hat{\Theta}_2)$. Similarly to Example \ref{ex:c2} we now have that any of the following two-forms provides a self $T$-dual for $(\cc P^2,0)$.
\begin{align*}
F_{1,\pm} &= \Theta_1 \wedge \hat{\Theta}_1 \pm \Theta_2 \wedge \hat{\Theta}_2,\\
F_{2,\pm} &= - \Theta_1 \wedge \hat{\Theta}_2 \pm \Theta_2 \wedge \hat{\Theta}_1.
\end{align*}
We will now the action of this $T$-duality to the generalized complex structures on $\cc P^2$.

Consider the holomorphic Poisson structure $\tilde{\pi} = z_0z_1\partial_{z_0}\wedge \partial_{z_1}$. Because this is $\cc^*$-invariant it induces a holomorphic Poisson structure $\pi$ on $\cc P^2$, which is moreover log symplectic. One can show, as in Example \ref{ex:c2}, that $F_{2,+}$ provides a $T$-dual between the corresponding stable generalized complex structure and the standard complex structure on $\cc P^2$. Similarly, consider the holomorphic Poisson structure $\tilde{\pi_2} = iz_0z_1\partial_{z_0}\wedge \partial_{z_1}$, and the induced generalized complex structure $\rho_2$. Then $F_{1,-}$, as in Example \ref{ex:c2}, will provide a $T$-dual between $\rho_2$ and itself.
\end{example}

\begin{example}[$S^2\times S^2$]
Identify $M = S^2\times S^2$ with $\mathbb{C}P^1 \times \mathbb{C}P^1$, and consider the torus action given by $(\lambda_1,\lambda_2)\cdot ([z_0:z_1],[w_0:w_1]) = ([e^{i\lambda_1}z_0:z_1],[e^{i\lambda_2}w_0:w_1])$. Let $\tilde{\zeta}_1 = d\log z_0 - d\log z_1$ and $\tilde{\zeta}_2 = d\log w_0 - d\log w_1$ be complex log-forms on $(\cc^2\backslash \set{0})^2$. As these are basic they induce complex log-forms $\zeta_1,\zeta_2$ on $M$, and we may consider their imaginary parts $\Theta_1 = \Im^*\zeta_1,\Theta_2 = \Im^*\zeta_2$. The form $(\Theta_1,\Theta_2) \in \Omega^2(\elli;\mathfrak{t}^2)$ provides a connection one-form for the torus action on $M$. Let $\hat{M}$ denote another copy of $S^2\times S^2$, and consider the same connection one-form $(\hat{\Theta}_1,\hat{\Theta}_2)$. As in the previous example, the form $F = -\Theta_1 \wedge \hat{\Theta}_2 + \Theta_2 \wedge \hat{\Theta_1}$ provides a self $T$-dual on $(M,0)$. 

In affine coordinates $t = z_0/z_1, s = w_0/w_1$ we consider the holomorphic Poisson structure $\pi = tw\partial_t\partial_w$. In any other set of affine coordinates, this Poisson structure has the same form and hence defines a global holomorphic Poisson structure on $M$, which is moreover log symplectic. As in the previous example, the corresponding stable generalized complex structure is $T$-dual to the standard complex structure on $S^2\times S^2$. And the stable generalized complex structure associated to $\pi = itw\partial_t\partial_w$ is $T$-dual to itself via $F = \Theta_1 \wedge \hat{\Theta}_1 - \Theta_2 \wedge \hat{\Theta}_2$.
\end{example}

We may now blow-up $\mathbb{C}P^2$ to obtain more examples of stable generalized complex structures, which are $T$-dual:
\begin{example}[$n\mathbb{C}P^2 \# m \bar{\mathbb{C}P}^2$]
Let $\tilde{\pi}_1 = z_0z_1\partial_{z_0}\wedge \partial_{z_1}$, and $\tilde{\pi}_2 = iz_0z_1\partial_{z_0}\wedge \partial_{z_1}$ be holomorphic Poisson structures on $\cc^3\backslash \set{0}$, and $\rho_1,\rho_2$ the induced stable generalized complex structures on $\mathbb{C}P^2$, as in Example \ref{ex:cp2}. We may perform consequtive blow-ups of either of these structures, to obtain families of generalized complex structures $\rho_{1,n,m},\rho_{2,n,m}$ on $\elli \rightarrow n \# \cc P^2 m \# \bar{\cc P^2}$. One readily shows that these generalized complex structures on $\elli$, induced generalized complex structures on $n \# \cc P^2 m \# \bar{\cc P^2}$ if and only if $n$ is odd.

We have seen in Example \ref{ex:cp2} that $\rho_1$ can be made $T$-dual to the standard complex structure, and $\rho_2$ dual to itself. By Proposition \ref{prop:blow-upT} it then follows that each $\rho_{2,n,m}$ is $T$-dual to itself, and $\rho_{1,n,m}$ is $T$-dual to the complex structures on $\elli$ described in Example \ref{ex:complexstrucs}.
\end{example}

\begin{remark}
We obtained the families $\rho_{2,n,m}$ also in Theorem 7.5 in \cite{CKW20} via a connected sum procedure. But not the families $\rho_{1,n,m}$. This is of relevance, as we see that their behaviour under $T$-duality is very different.
\end{remark}

\begin{example}[$n\mathbb{C}P^{2k}\# m \bar{\cc P}^{2k}$]
The previous example is in no way restrictive to four dimensions. One may as easily start with the Poisson structure $z_0z_1 \partial_{z_0} \wedge \partial_{z_1} + \cdots + z_{2(k-1)}z_{2k}\partial_{z_{2k-1}}\wedge\partial_{z_{2k}}$ on $\cc^{2k+1}$, and repeat the above to obtain T-dual generalized complex structures on $n\mathbb{C}P^{2k}$. After blowing up at fixed points one again obtains many families of T-dual generalized complex structures on $n \# \cc P^{2k} m \# \bar{\cc P^{2k}}$
\end{example}

\subsection{Non-zero residue elliptic symplectic structures}
Another class of structures which behave well with respect to $T$-duality are elliptic symplectic forms with non-zero elliptic residue. These structures admit a $\cc^*$-invariant normal form:
\begin{lemma}[\cite{W21}]
Let $(M,I_{\abs{D}})$ be a manifold with a smooth elliptic divisor, and let $\omega \in \Omega^2(\elli)$ be an elliptic symplectic form with $\lambda := \Res_q(\omega) \neq 0$. Then there exists a canonical flat connection on $\nu_D$ and a tubular neighbourhood of $D$ such that
\begin{equation}\label{eq:ellilocmod}
\omega = \lambda\rho \wedge \Theta + p^*\omega_D.
\end{equation}
Here $\rho,\Theta$ are the connection-one forms corresponding to the flat connection, as defined in \ref{lem:linebundleconn} and $\omega_D = \iota^*_D\omega \in \Omega^2(D)$. 
\end{lemma}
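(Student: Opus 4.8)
The plan is to read the flat connection directly off the residues of $\omega$ and then remove the remaining discrepancy by a Moser argument adapted to the elliptic de Rham complex. First I would record that $\lambda$ is constant along each component of $D$: since $\Res_q\colon(\Omega^\bullet(\elli),d_\elli)\to(\Omega^{\bullet-2}(D),d)$ is a map of complexes and $\omega$ is closed, $d\lambda=\Res_q(d_\elli\omega)=0$. Likewise the radial residue $\Res_r\omega=\iota_{r\partial_r}\omega$ is closed because $\Res_r$ is a cochain map, so after normalising I would set $\Theta:=\tfrac1\lambda\Res_r\omega$. Evaluating on the fibre generator gives $\iota_{\partial_\theta}\Theta=\tfrac1\lambda\,\omega(r\partial_r,\partial_\theta)$, which equals $1$ along $D$; combined with $S^1$-invariance (arranged, if necessary, by first averaging $\omega$ over the normal circle action, an operation that changes neither $\iota_D^*\omega$ nor the residues) this exhibits $\Theta$ as a connection one-form on the normal circle bundle $S^1ND$, and $d\Theta=\tfrac1\lambda\,d\Res_r\omega=0$ shows the induced connection on $\nu_D$ is flat. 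Together with a choice of fibre metric, giving the radial form $\rho$, this is exactly the data of Lemma \ref{lem:linebundleconn}, and the flat connection so obtained is canonical up to that metric choice.

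Next I would introduce the candidate normal form $\omega_0:=\lambda\,\rho\wedge\Theta+p^*\omega_D$ with $\omega_D:=\iota_D^*\omega$. It is closed, since $d\rho=d\Theta=0$ and $d\omega_D=\iota_D^*d_\elli\omega=0$, and near $D$ it is nondegenerate because $\omega_D$ is symplectic on $D$ while the $\rho\wedge\Theta$ block pairs the two normal directions. A direct computation, using $\rho(r\partial_r)=\Theta(\partial_\theta)=1$ and the horizontality of $p^*\omega_D$, shows $\Res_q\omega_0=\lambda$, $\Res_r\omega_0=\lambda\Theta=\Res_r\omega$ and $\iota_D^*\omega_0=\omega_D$. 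Hence the closed elliptic two-form $\sigma:=\omega-\omega_0$ has vanishing $\Res_q$, vanishing $\Res_r$ and vanishing restriction to $D$; in other words $\sigma$ carries no singular part and vanishes along $D$.

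Finally I would run a Moser argument for the path $\omega_t:=\omega_0+t\sigma$, $t\in[0,1]$, which stays nondegenerate on a sufficiently small tubular neighbourhood since $\omega_0$ is nondegenerate and $\sigma$ is small there. A relative Poincar\'e lemma in the elliptic complex, using that $\sigma$ is closed with vanishing residues and vanishing $\iota_D^*$, produces a primitive $\tau\in\Omega^1(\elli)$ vanishing to the required order along $D$; solving $\iota_{X_t}\omega_t=-\tau$ then yields a time-dependent section $X_t\in\Gamma(\elli)$ vanishing on $D$, whose flow is an automorphism of the elliptic tangent bundle fixing $D$ and preserving $I_{\abs{D}}$. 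Pulling back along this flow carries $\omega$ to $\omega_0$, which is the asserted identity in the resulting tubular-neighbourhood coordinates.

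I expect the main obstacle to be precisely this last step: making the Moser scheme work in the singular elliptic category. The delicate points are the elliptic relative Poincar\'e lemma, where one must exploit the vanishing of all residues of $\sigma$ to guarantee that the primitive $\tau$ is a genuine smooth elliptic form vanishing along $D$ rather than merely a form on $M\setminus D$, and the verification that the resulting $X_t$ is a bona fide elliptic vector field vanishing on $D$, so that its flow preserves the divisor and integrates to the desired tubular neighbourhood identification.
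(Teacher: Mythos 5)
The paper itself contains no proof of this lemma --- it is imported wholesale from \cite{W21} --- so there is no in-paper argument to compare yours against, and it must stand on its own. Its first step does: since the residue maps intertwine the differentials (up to sign), $\lambda$ is locally constant and $\Theta:=\tfrac1\lambda\Res_r\omega$ is a closed invariant one-form on $S^1ND$ with $\iota_{\partial_\theta}\Theta=1$, hence a flat principal connection; this is indeed the right source of the canonical flat connection. (Your averaging aside is both unnecessary --- $\Res_r\omega$ is a form on the Atiyah algebroid of $S^1ND$, so it is invariant by construction --- and illegitimate as stated, since replacing $\omega$ by its average changes the form whose normal form you are supposed to establish.)

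The genuine gap is the sentence asserting that $\sigma:=\omega-\omega_0$ ``carries no singular part and vanishes along $D$'' because $\Res_q\sigma=0$, $\Res_r\sigma=0$ and $\iota_D^*\sigma=0$. These three conditions do not control the pairings $\sigma(\partial_\theta,V)$ for $V$ a lift of a vector tangent to $D$, so they do not imply $\sigma|_{\elli|_D}=0$, which is exactly what your relative Poincar\'e lemma and the vanishing of the Moser field along $D$ require. Concretely, on $\cc\times\rr^2$ with divisor $\langle r^2\rangle$ and $D=\{0\}\times\rr^2$, take
\begin{equation*}
\omega=\lambda\, d\log r\wedge d\theta+d\theta\wedge dx+dx\wedge dy .
\end{equation*}
This is elliptic symplectic, with $\Res_q\omega=\lambda$, $\Res_r\omega=\lambda\, d\theta$ (so your $\Theta$ is $d\theta$) and $\iota_D^*\omega=dx\wedge dy$ in the coordinate splitting. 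Your recipe, with $\rho=d\log r$ coming from the standard fibre metric, gives $\omega_0=\lambda\, d\log r\wedge d\theta+dx\wedge dy$, hence $\sigma=d\theta\wedge dx$: all three of your invariants vanish, yet $\sigma$ is singular and $\sigma(\partial_\theta,\partial_x)=1\neq0$ on $\elli|_D$. So the key intermediate claim is false, and the Moser scheme as you set it up does not get off the ground.

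What is missing is that the radial part of the flat connection --- equivalently the first-order jet of the tubular neighbourhood, i.e.\ the splitting $TD\to\elli|_D$ implicitly used to define $\iota_D^*\omega$ --- is not a free ``choice of fibre metric'' but is itself dictated by $\omega$: in the normal form one has $\iota_{\partial_\theta}\omega=-\lambda\rho$, so $\rho$ must be read off from the $\theta$-contraction of $\omega$, and the splitting must be the $\omega$-orthogonal complement of $\langle r\partial_r,\partial_\theta\rangle$ in $\elli|_D$ (this is also what guarantees that $\omega_D$ is nondegenerate, which you assumed). In the example above this forces $\rho=d\log r-\tfrac1\lambda dx=d\log\bigl(re^{-x/\lambda}\bigr)$, i.e.\ the tubular neighbourhood must be twisted by $(z,x,y)\mapsto(ze^{-x/\lambda},x,y)$, after which $\omega=\lambda\rho\wedge\Theta+dx\wedge dy$ holds on the nose. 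Once $\rho$ and the splitting are pinned down this way, all components of $\omega|_{\elli|_D}$ match those of $\omega_0$, $\sigma$ genuinely vanishes along $D$, and your Moser argument (elliptic homotopy operator from the radial retraction, flow of an elliptic vector field vanishing along $D$) can be carried through.
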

Conversely, given any complex line bundle $L \rightarrow D$, $\nabla$ a flat connection on $L$, $\lambda \neq 0$ and $\omega_D \in \Omega^2(D)$ symplectic, \eqref{eq:ellilocmod} defines an elliptic symplectic form on $L$.

The form $\omega$ is $S^1$-invariant, and hence we can consider its $T$-dual. It will appear via the following lemma:
\begin{lemma}\label{lem:flatconngcs}
Let $(M,J)$ be a generalized complex manifold, and $L \rightarrow M$ a complex vector bundle, endowed with a flat connection $\nabla$. Then ${\rm tot}(L)$ inherits a natural generalized complex structure.
\end{lemma}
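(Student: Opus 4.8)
The plan is to use the flat connection to assemble the structure from $\mathbb J$ on the base and the fibrewise complex structure of $L$, and to verify integrability by passing to $\nabla$-parallel frames, in which $\mathrm{tot}(L)$ becomes a product.

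First I would use $\nabla$ to split the generalized tangent bundle of the total space. Write $\pi\colon \mathrm{tot}(L)\to M$ for the projection. The flat connection provides a horizontal distribution $\mathcal H\subset T(\mathrm{tot}(L))$ complementary to the vertical bundle $\mathcal V=\ker d\pi$, and $d\pi$ identifies $\mathcal H\cong \pi^* TM$. Since the fibres of $L$ are complex vector spaces, $\mathcal V\cong \pi^* L$ canonically and carries the fibrewise complex structure $I$. Dualising yields a pairing-orthogonal splitting
\[
\mathbb T(\mathrm{tot}(L)) \;=\; \pi^*\mathbb T M \;\oplus\; \big(\pi^* L\oplus \pi^* L^*\big).
\]
On this splitting I would set $\mathbb J_{\mathrm{tot}} = \pi^*\mathbb J \oplus \mathbb J_I$, where $\mathbb J_I$ is the generalized complex structure on $\pi^* L\oplus \pi^* L^*$ obtained by promoting the fibrewise complex structure $I$ (as in Example~\ref{def:complexlift}), and take the closed three-form to be $\pi^* H$. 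That $\mathbb J_{\mathrm{tot}}^2=-\mathrm{id}$ and that $\mathbb J_{\mathrm{tot}}$ is orthogonal are immediate, since each summand is an orthogonal complex structure on a pairing-orthogonal piece and the natural pairing respects the splitting.

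The only real content is involutivity of the $+i$-eigenbundle $E_{\mathrm{tot}}=\pi^* E_{\mathbb J}\oplus E_I$ under the $\pi^* H$-twisted Courant bracket, where $E_{\mathbb J}$ and $E_I$ are the $+i$-eigenbundles of $\mathbb J$ and $\mathbb J_I$. Here flatness is used decisively. Because $\nabla$ is flat, each point of $M$ has a neighbourhood $U$ carrying a $\nabla$-parallel frame of $L$, giving a trivialisation $\pi^{-1}(U)\cong U\times \cc^r$ under which $\mathcal H$ is the integrable distribution tangent to the $U$-factor and $I$ is the standard complex structure on $\cc^r$. In this chart $\mathbb J_{\mathrm{tot}}$ is exactly the product of $(\mathbb J,H)|_U$ with the standard complex structure on $\cc^r$; since a product of generalized complex structures is again generalized complex, $E_{\mathrm{tot}}$ is involutive over $\pi^{-1}(U)$.

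It remains to check that these local structures are independent of the chosen flat frame, and hence glue. The transition functions between two $\nabla$-parallel frames are locally constant elements of $\GL(r,\cc)$; being constant they are holomorphic automorphisms of the fibre $\cc^r$ covering the identity on $U$, so they preserve both $I$ and the pulled-back form $\pi^* H$. Thus the locally defined structures agree on overlaps and assemble into a global generalized complex structure on $\mathrm{tot}(L)$. The main obstacle is precisely the vanishing of the mixed horizontal--vertical terms of the Courant bracket on $E_{\mathrm{tot}}$: it is flatness that makes $\mathcal H$ integrable and lets the parallel frames kill these terms, and this is the one place where the hypothesis on $\nabla$ genuinely enters.
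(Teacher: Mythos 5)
Your proposal is correct and takes essentially the same route as the paper: the paper also uses the flat connection to split the generalized tangent bundle of $\mathrm{tot}(L)$, defines the structure as the (pulled-back) base structure on the horizontal part and fibrewise multiplication by $i$ on the vertical part, and invokes flatness for integrability. You merely spell out the integrability step (via $\nabla$-parallel frames and the product structure) that the paper compresses into a single sentence, which is a faithful expansion rather than a different argument.
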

\begin{proof}
Choose a set of trivializations with constant transition functions. On each trivialization use the product of the generalized complex structure on the base and the complex structure of the fiber. Since the transition functions are constant these structures defined on trivializations glue together to give a global structure on the total space of $L$.
\end{proof}

\begin{lemma}
Given $(D,\omega_D)$ a symplectic manifold, $p :L \rightarrow M$ a complex line bundle with flat connection $\nabla$, and $\lambda \in \rr\setminus \set{0}$ then $\omega$ as in \eqref{eq:ellilocmod} is $T$-dual to the type 1 generalized complex structure induced by Lemma \ref{lem:flatconngcs}
\end{lemma}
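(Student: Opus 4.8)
The plan is to compute the $T$-dual of the elliptic symplectic spinor $e^{i\omega}$ directly with the transform $\tau$ of Proposition \ref{prop:complexiso} and then recognise the result as the lift to $\ellihat$ of the type $1$ structure produced by Lemma \ref{lem:flatconngcs}. Throughout I regard $M = \tot(L)$ equipped with the fibrewise $S^{1}$-action (whose fixed locus is the zero section $D$), so that the flat connection $\nabla$ supplies the global connection one-forms $\rho$ and $\Theta$ appearing in \eqref{eq:ellilocmod}, with $\iota_{\partial_\theta}\Theta = 1$ and $\iota_{\partial_\theta}\rho = 0$.

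First I would fix the $T$-duality data. Take $\hat M = \tot(L)$ as a second copy over $B = M/S^1$, with its own connection forms $\hat\rho,\hat\Theta$, and set
\[ F = \lambda\,\Theta\wedge\hat\Theta \in \Omega^2(\elli\times_B\ellihat). \]
Since $\nabla$ is flat we have $d\Theta = d\hat\Theta = 0$, hence $dF = 0 = p^{*}H - \hat p^{*}\hat H$ with $H = \hat H = 0$; moreover $F(\partial_\theta,\partial_{\hat\theta}) = \lambda \neq 0$, which is the required non-degeneracy of $F$ on $\mathfrak t_M \otimes \mathfrak t_{\hat M}$. Thus $F$ exhibits $(M,0)$ and $(\hat M,0)$ as $T$-dual.

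Second I would carry out the computation. The elliptic symplectic structure has spinor $e^{i\omega} = (1 + i\lambda\,\rho\wedge\Theta)\wedge e^{i p^{*}\omega_D}$, because $(\rho\wedge\Theta)^2 = 0$ and the even form $p^{*}\omega_D$ commutes with $\rho\wedge\Theta$. Expanding $e^{F}\wedge e^{i\omega}$, every term containing $\Theta\wedge\Theta$ drops out, and applying $\iota_{\partial_\theta}$ (which kills $\rho$, $\hat\Theta$ and $p^{*}\omega_D$ and sends $\Theta$ to $1$) yields
\[ \tau(e^{i\omega}) = \iota_{\partial_\theta}\bigl(e^{F}\wedge e^{i\omega}\bigr) = \bigl(-i\lambda\,\rho + \lambda\,\hat\Theta\bigr)\wedge e^{i p^{*}\omega_D} = -i\lambda\,\bigl(\rho + i\hat\Theta\bigr)\wedge e^{i p^{*}\omega_D}. \]
On the correspondence space the two radial coordinates are identified, $\rho = \hat\rho = d\log\hat r$, so $\rho + i\hat\Theta = d\log\hat r + i\,d\hat\theta = d\log\hat z$, and the $T$-dual spinor is $-i\lambda\, d\log\hat z \wedge e^{i p^{*}\omega_D}$.

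Finally I would match this with Lemma \ref{lem:flatconngcs}. Applying that lemma to the symplectic (hence generalized complex) base $(D,\omega_D)$ and to $L\to D$ with its flat connection produces a type $1$ generalized complex structure $\tilde J$ on $\hat M = \tot(L)$, symplectic along the horizontal distribution and complex along the fibres, whose canonical spinor is $e^{i\omega_D}$ wedged with the holomorphic fibre form. The heart of the argument, and the step I expect to be the main obstacle, is showing that lifting $\tilde J$ to $\ellihat$ — the elliptic tangent bundle of the zero section $\hat D = D$, just as a fibrewise complex structure $v\mapsto iv$ lifts to a complex structure on $\elli$ — replaces the holomorphic fibre coordinate by $d\log\hat z$, so that the lifted spinor is exactly $e^{i\omega_D}\wedge d\log\hat z$. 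Granting this, the lifted spinor agrees with $\tau(e^{i\omega})$ up to the nonzero scalar $-i\lambda$, which is immaterial for the underlying structure, and the flatness of $\nabla$ ensures the local identity patches to a global one over the tubular neighbourhood; this proves that $\omega$ is $T$-dual to the type $1$ structure of Lemma \ref{lem:flatconngcs}. The remaining verifications — that $\hat M = \tot(L)$ with the chosen $F$ is a genuine $T$-dual over $B$ (immediate since $H = \hat H = 0$ and $\nabla$ is flat) and the interior-product bookkeeping — are routine.
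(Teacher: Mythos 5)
Your proof is correct and follows essentially the same route as the paper: choose $F$ built from the two flat connection one-forms, compute $\tau(e^{i\omega})$ by contracting $e^{F}\wedge e^{i\omega}$ with the generator of the circle action, and recognise the resulting spinor as the lift $d\log\hat z\wedge e^{i\omega_D}$ of the type 1 structure of Lemma \ref{lem:flatconngcs}. The only difference is that you take $F=\lambda\,\Theta\wedge\hat\Theta$ where the paper takes $F=\Theta\wedge\hat\Theta$; your scaling is in fact the more careful choice, since $F=\Theta\wedge\hat\Theta$ yields $(\hat\Theta-i\lambda\rho)\wedge e^{i\omega_D}$ (a type 1 structure whose fibre complex structure is rescaled by $\lambda$), whereas the paper's stated answer $i\lambda(\rho+i\hat\Theta)\wedge e^{i\omega_D}$ is exactly what your choice of $F$ produces.
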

\begin{proof}
Let $\hat{\Theta}$ denote the same connection one-form, on another copy of $L \rightarrow M$. Then $F = \Theta \wedge \hat{\Theta}$ provides the desired $T$-dual. We have that, using $\tau$ from Proposition \ref{prop:complexiso} that
\begin{equation*}
\tau(e^{i\omega}) = i\lambda(\rho + i\Theta) \wedge e^{i\omega_D}. 
\end{equation*}
Locally, this corresponds to the spinor given by $i\lambda dz \wedge e^{i\omega_D}$, which is precisely the generalized complex structure form Lemma \ref{lem:flatconngcs}.
\end{proof}

\begin{remark}
Given a smooth elliptic divisor $I_{\abs{D}}$, elliptic symplectic forms with zero elliptic residue have a similar $S^1$-invariant normal form as in \eqref{eq:ellilocmod}. Unfortunately, the corresponding $T$-dual generalized complex structure on $\elli$ does not descend to a generalized complex structure on $M$.
\end{remark}

\bibliographystyle{hyperamsplain}
\bibliography{references}

\end{document}